\newcommand{\cip}{\stackrel{\P}{\rightarrow}}
\newcommand{\eid}{\stackrel{d}{=}}
\newcommand{\dint}{\,\mathrm{d}}
\newcommand{\x}{\mathbf{x}}
\newcommand{\y}{\mathbf{y}}
\newcommand{\iv}{\mathbf{i}}
\newcommand{\kv}{\mathbf{k}}
\newcommand{\lv}{\ell}
\newcommand{\limn}{\lim_{n \to \infty}}
\definecolor{darkblue}{rgb}{.2, 0.2,.8}
\definecolor{darkgreen}{rgb}{0,0.5,0.3}
\definecolor{darkred}{rgb}{.8, .1,.1}
\newcommand{\ex}{{\rm e}\,}
\newtheorem{lemma}{Lemma}[section]
\newtheorem{theorem}[lemma]{Theorem}
\newtheorem{proposition}[lemma]{Proposition}
\newtheorem{definition}[lemma]{Definition}
\newtheorem{corollary}[lemma]{Corollary}
\newtheorem{example}[lemma]{Example}
\newtheorem{exercise}[lemma]{Exercise}
\newtheorem{remark}[lemma]{Remark}
\newtheorem{fig}[lemma]{Figure}
\newtheorem{tab}[lemma]{Table}
\newcommand{\cid}{\stackrel{d}{\rightarrow}}
\newcommand{\bth}{\begin{theorem}}
\newcommand{\ethe}{\end{theorem}}
\newcommand{\bre}{\begin{remark}\em }
\newcommand{\ere}{\end{remark}}
\newcommand{\ble}{\begin{lemma}}
\newcommand{\ele}{\end{lemma}}
\newcommand{\pp}{point process}
\newcommand{\bde}{\begin{definition}}
\newcommand{\ede}{\end{definition}}
\newcommand{\bco}{\begin{corollary}}
\newcommand{\eco}{\end{corollary}}
\newcommand{\bpr}{\begin{proposition}}
\newcommand{\epr}{\end{proposition}}
\newcommand{\bexer}{\begin{exercise}}
\newcommand{\eexer}{\end{exercise}}
\newcommand{\bexam}{\begin{example}}
\newcommand{\eexam}{\end{example}}
\newcommand{\bfi}{\begin{fig}}
\newcommand{\efi}{\end{fig}}
\newcommand{\btab}{\begin{tab}}
\newcommand{\etab}{\end{tab}}
\newcommand{\sign}{{\rm sign}}
\newcommand{\var}{\operatorname{Var}}
\newcommand{\Var}{\operatorname{Var}}
\newcommand{\as}{{\rm a.s.}}
\newcommand{\rhs}{right-hand side}
\newcommand{\beao}{\begin{eqnarray*}}
\newcommand{\eeao}{\end{eqnarray*}\noindent}
\newcommand{\beam}{\begin{eqnarray}}
\newcommand{\eeam}{\end{eqnarray}\noindent}
\newcommand{\beqq}{\begin{equation}}
\newcommand{\eeqq}{\end{equation}\noindent}
\newcommand{\bce}{\begin{center}}
\newcommand{\ece}{\end{center}}
\newcommand{\barr}{\begin{array}}
\newcommand{\earr}{\end{array}}
\newcommand{\std}{\stackrel{d}{\rightarrow}}
\newcommand{\vague}{\stackrel{\lower0.2ex\hbox{$\scriptscriptstyle
                    \it{v} $}}{\rightarrow}}
\newcommand{\weak}{\stackrel{\lower0.2ex\hbox{$\scriptscriptstyle
                    \it{w} $}}{\rightarrow}}
\newcommand{\what}{\stackrel{\lower0.2ex\hbox{$\scriptscriptstyle
                    \it{\hat{w}} $}}{\rightarrow}}
\newcommand{\bdis}{\begin{displaymath}}
\newcommand{\edis}{\end{displaymath}\noindent}
\newcommand{\N}{\mathbb{N}}
\newcommand{\Z}{\mathbb{Z}}
\newcommand{\R}{\mathbb{R}}
\newcommand{\nto}{n\to\infty}
\newcommand{\ov}{\overline}
\newcommand{\wt}{\widetilde}
\newcommand{\vep}{\varepsilon}
\newcommand{\con}{convergence}
\newcommand{\st}{such that}
\newcommand{\E }{{\mathbb E}}
\renewcommand{\P }{{\mathbb P}}
\newcommand{\1}{\mathds{1}}
\DeclareMathOperator{\e}{e}
\newcommand{\otime}[2]{\mathbin{\mathop{\otimes}\limits_{#1}^{#2}}}
\newcommand{\sumt}{\sum_{t=1}^n}
\let\save@mathaccent\mathaccent
\newcommand*\if@single[3]{%
  \setbox0\hbox{${\mathaccent"0362{#1}}^H$}%
  \setbox2\hbox{${\mathaccent"0362{\kern0pt#1}}^H$}%
  \ifdim\ht0=\ht2 #3\else #2\fi
  }
\newcommand*\rel@kern[1]{\kern#1\dimexpr\macc@kerna}
\newcommand*\widebar[1]{\@ifnextchar^{{\wide@bar{#1}{0}}}{\wide@bar{#1}{1}}}
\newcommand*\wide@bar[2]{\if@single{#1}{\wide@bar@{#1}{#2}{1}}{\wide@bar@{#1}{#2}{2}}}
\newcommand*\wide@bar@[3]{%
  \begingroup
  \def\mathaccent##1##2{%
%Enable nesting of accents:
    \let\mathaccent\save@mathaccent
%If there's more than a single symbol, use the first character instead (see below):
    \if#32 \let\macc@nucleus\first@char \fi
%Determine the italic correction:
    \setbox\z@\hbox{$\macc@style{\macc@nucleus}_{}$}%
    \setbox\tw@\hbox{$\macc@style{\macc@nucleus}{}_{}$}%
    \dimen@\wd\tw@
    \advance\dimen@-\wd\z@
%Now \dimen@ is the italic correction of the symbol.
    \divide\dimen@ 3
    \@tempdima\wd\tw@
    \advance\@tempdima-\scriptspace
%Now \@tempdima is the width of the symbol.
    \divide\@tempdima 10
    \advance\dimen@-\@tempdima
%Now \dimen@ = (italic correction / 3) - (Breite / 10)
    \ifdim\dimen@>\z@ \dimen@0pt\fi
%The bar will be shortened in the case \dimen@<0 !
    \rel@kern{0.6}\kern-\dimen@
    \if#31
      \overline{\rel@kern{-0.6}\kern\dimen@\macc@nucleus\rel@kern{0.4}\kern\dimen@}%
      \advance\dimen@0.4\dimexpr\macc@kerna
%Place the combined final kern (-\dimen@) if it is >0 or if a superscript follows:
      \let\final@kern#2%
      \ifdim\dimen@<\z@ \let\final@kern1\fi
      \if\final@kern1 \kern-\dimen@\fi
    \else
      \overline{\rel@kern{-0.6}\kern\dimen@#1}%
    \fi
  }%
  \macc@depth\@ne
  \let\math@bgroup\@empty \let\math@egroup\macc@set@skewchar
  \mathsurround\z@ \frozen@everymath{\mathgroup\macc@group\relax}%
  \macc@set@skewchar\relax
  \let\mathaccentV\macc@nested@a
%The following initialises \macc@kerna and calls \mathaccent:
  \if#31
    \macc@nested@a\relax111{#1}%
  \else
%If the argument consists of more than one symbol, and if the first token is
%a letter, use that letter for the computations:
    \def\gobble@till@marker##1\endmarker{}%
    \futurelet\first@char\gobble@till@marker#1\endmarker
    \ifcat\noexpand\first@char A\else
      \def\first@char{}%
    \fi
    \macc@nested@a\relax111{\first@char}%
  \fi
  \endgroup
}
\renewcommand{\bar}{\widebar}
\begin{document}
%\today
\bibliographystyle{acm}
\title[Point process convergence for symmetric functions]{Point process convergence for symmetric functions of high-dimensional random vectors}
\thanks{We thank Christoph Thäle for fruitful discussions. The work of two anonymous reviewers is gratefully acknowledged.}

\author[J. Heiny]{Johannes Heiny}
\address{Department of Mathematics,
Stockholm University,
Albano hus 1,
10691 Stockholm,
Sweden}
\email{johannes.heiny@math.su.se}
\author[C. Kleemann]{Carolin Kleemann}
\address{Fakult\"at f\"ur Mathematik,
Ruhr-Universit\"at Bochum,
Universit\"atsstrasse 150,
D-44801 Bochum,
Germany}
\email{carolin.kleemann@rub.de}

\begin{abstract}
The convergence of a sequence of point processes with dependent points, defined by a symmetric function of iid high-dimensional random vectors, to a Poisson random measure is proved. This also implies the convergence of the joint distribution of a fixed number of upper order statistics. As applications of the result a generalization of maximum convergence to point process convergence is given for simple linear rank statistics, rank-type U-statistics and the entries of sample covariance matrices.
\end{abstract}
\keywords{Point process convergence, extreme value theory, Poisson process, Gumbel distribution, high-dimensional data, U-statistics, Kendall's tau, Spearman's rho}
\subjclass{Primary 60G55; Secondary 60G70, 60B12}
\maketitle

\section{Introduction}\label{sec:intro}
In classical extreme value theory the asymptotic distribution of the maximum of random points plays a central role. Maximum type statistics build popular tests on the dependency structure of high-dimensional data. Especially, against sparse alternatives those tests possess good power properties (see \cite{han:chen:liu:2017,drton2020high,zhou2019extreme}). Closely related to the maxima of random points are point processes, which play an important role in stochastic geometry and data analysis. They have applications in statistical ecology, astrostatistics and spatial epidemiology \cite{baddeley:2007}. For a sequence $(Y_i)_i$ of real-valued random variables, we set
\begin{align*}
    \widetilde{M}_p:=\sum_{i=1}^p\varepsilon_{(i/p,Y_i)},
\end{align*}
where $\varepsilon_x$ is the Dirac measure in $x$. Let $K:=(0,1)\times (u,\infty)$ with $u\in\R$. Then, $\widetilde{M}_p(K)$ counts the number of exceedances of the threshold $u$ by the random variables $Y_1,\ldots, Y_p$. If $Y^{(k)}$ denotes the $k$-th upper order statistic of $Y_1,\ldots,Y_p$, it holds that $\{\widetilde{M}_p(K)<k\}=\{Y^{(k)}\leq u\}$, and in particular $\{\widetilde{M}_p(K)=0\}=\{\max_{i=1,\ldots,p} Y_i\leq u\}$. Therefore, the weak convergence of a sequence of point processes gives information about the joint asymptotic distribution of a fixed number of upper order statistics. If the sequence $(Y_i)_i$ consists of independent and identically distributed (iid) random variables, maximum convergence and point process convergence are equivalent, but if the random variables exhibit dependency, this equivalence does not necessarily hold anymore. In this sense, point process convergence is a substantial generalization of the maximum convergence. Additionally, the time components $i/p$ deliver valuable information of the random time points when a record occurs, i.e., the time points when $Y_j>\max_{i=1,\ldots,j-1}Y_i$. \\
Our main motivation comes from statistical inference for high-dimensional data, where the asymptotic distribution of the maximum of dependent random variables has found several applications in recent years (see for example \cite{han:chen:liu:2017,drton2020high,zhou2019extreme,cai:jiang:2011,cai:liu:xia:2013,cai:2017,cai:liu:2011, goesmann:2022}). The objective of this paper is to provide the methodology to extend meaningful results with reference to the convergence of the maximum of dependent random variables, to point process convergence.

To this end, we consider dependent points $T_{\iv}:=g_{n,p}(\x_{i_1},\x_{i_2},\ldots, \x_{i_m})$,
where the index $\iv= (i_1, i_2,\ldots, i_m)\in \{1,\ldots,p\}^m $. The random vectors $\x_1,\ldots,\x_p$ are iid on $\R^n$ and $g_{n,p}:\R^{mn}\to \R$ is a measurable, symmetric function. Important examples include U-statistics, simple linear rank statistics, rank-type U-statistics, the entries of sample covariance matrices or interpoint distances.

Additionally, we assume that the dimension of the points $n$ is growing with the number of points $p$.
Over the last decades the environment and therefore the requirements for statistical methods have changed fundamentally. Due to the huge improvement of computing power and data acquisition technologies one is confronted with large data sets, where the dimension of observations is as large or even larger than the sample size. These high-dimensional data occur naturally in online networks, genomics, financial engineering, wireless communication or image analysis (see \cite{johnstone:titterington:michael:2009, clarke2008properties, donoho:2000}).
Hence, the analysis of high-dimensional data has developed as a meaningful and active research area.

We will show that the corresponding point process of the points $T_\iv$ converges to a Poisson random measure (PRM) with a mean measure that involves the $m$-dimensional Lebesgue measure and an additional measure $\mu$.
If we replace the points $T_\iv$ with iid random variables with the same distribution, the (non-degenerate) limiting distribution of the maximum will necessarily be an extreme value distribution of the form $\exp(-\mu(x))$. Moreover, the convergence of the corresponding point process will be equivalent to the condition 
\begin{align}\label{cond1}
\binom{p}{m} \P(g_{n,p}(\x_1,\x_2,\ldots, \x_m)>x) \to \mu(x),\qquad p\to\infty.
\end{align}
However, since the random points $T_\iv$ are not independent, we additionally need the following assumption on the dependence structure
%For point process convergence to a Poisson random measure we need the following assumption on the dependence structure
\begin{align}\label{cond2i}
\P(g_{n,p}(\x_1,\x_2,\ldots,\x_m)>x, g_{n,p}(\x_{m-l+1},\ldots, \x_{2m-l})>x)=o(p^{-(2m-l)}), \qquad p\to\infty,\phantom{platz} 
\end{align}
where $l=1,\ldots, m-1$.

In the finite-dimensional case where $n$ is fixed, several results about point process convergence are available in similar settings. 
In \cite{MR511059}, Silverman and Brown showed point process convergence for $m=2$, $n=2$ and $g_{2,p}(\x_i,\x_j)=a_p\Vert \x_i-\x_j\Vert_2^2$, where the $\x_i$ have a bounded and almost everywhere continuous density, $a_p$ is a suitable scaling sequence and $\Vert\cdot\Vert_2$ is the Euclidean norm on $\R^2$. In the Weibull case  $\mu(x)= x^\alpha$ for $x,\alpha>0$, Dehling et al.~\cite{dabrowski2002poisson} proved a generalization to points with a fixed dimension and $g_{n,p}(\x_i,\x_j)=a_ph(\x_i,\x_j)$, where $h$ is a measurable, symmetric function and $a_p$ is a suitable scaling sequence.  

%In \cite{schulte2016poisson} Schulte and Thäle proved point process convergence for underlying Poisson and binomial point processes under similar assumptions for points with a fixed dimension.

Also in the finite-dimensional case, under similar assumptions as in \eqref{cond1} with $\mu(x)=\beta x^\alpha$ for $x,\alpha>0$, $\beta\in\R$ and under condition \eqref{cond2i}, Schulte and Thäle \cite{schulte:thaele:2012} showed convergence in distribution of point processes towards a Weibull process. The points of these point processes are obtained by applying a symmetric function $g_{n,p}$ to all $m$-tuples of distinct points of a Poisson process on a standard Borel space. In \cite{schulte2016poisson}, this result was extended to more general functions $\mu$ and to binomial processes so that other PRMs were possible limit processes. %Additionally, the authors proved that the result still holds if the underlying Poisson process is replaced by a binomial process. 
In \cite{decreusefond:schulte:thaele:2016}, Decreusefond, Schulte and Thäle provided an upper bound of the Kantorovich–Rubinstein distance between a PRM and the point process induced in the aforementioned way  by a Poisson or a binomial process on an abstract state space. Notice that convergence in Kantorovich-Rubinstein distance implies convergence in distribution  (see \cite[Theorem 2.2.1]{panaretos:2020} or \cite[p.~2149]{decreusefond:schulte:thaele:2016}). 
 In \cite{chenavier:2022} another point process result in a similar setting is given for the number of nearest neighbor balls in fixed dimension. Moreover, \cite{basrak:2021} presents a general framework for Poisson approximation of point processes on Polish spaces.

 \subsection{Structure of this paper}
The remainder of this paper is structured as follows. In Section \ref{sec1} we prove weak point process convergence for the dependent points $T_{\iv}$ in the high-dimensional case as tool for the generalization of the convergence of the maximum (Theorem~\ref{thm:maxg}).
 We provide popular representations of the limiting process in terms of the transformed points of a homogeneous Poisson process. Moreover, %in Theorem \ref{thm:rectime} 
 we derive point process convergence for the record times. In Section \ref{sec2} these tools are applied to study statistics based on relative ranks like simple linear rank statistics or rank-type U-statistics. We also prove convergence of the point processes of the off-diagonal entries of large sample covariance matrices. %In Proposition \ref{prop:ext} we transfer the weak convergence of a sequence of point processes to another sequence of point processes, if the maximum of the distances between the points of the new process and the points of the original process tends to zero in probability. This extension is useful to show point process convergence in our applications. 
 The technical proofs are deferred to Section~\ref{proof}.

\subsection{Notation}
Convergence in distribution (resp.\ probability) is denoted by $\cid$ (resp.\ $\cip$) and unless explicitly stated otherwise all limits are for $\nto$. For sequences $(a_n)_n$ and $(b_n)_n$ we write $a_n=O(b_n)$ if $a_n/b_n\leq C$ for some constant $C>0$ and every $n\in\N$, and $a_n=o(b_n)$ if $\lim_{n\to\infty} a_n/b_n=0$. Additionally, we use the notation
$a_n\sim b_n$ if $\lim_{n\to\infty} a_n/b_n=1$ and $a_n\lesssim b_n$ if $a_n$ is smaller than or equal to $b_n$ up to a positive universal constant. We further write $a\wedge b:=\min\{a,b\}$ for $a,b\in\R$ and for a set $A$ we denote $|A|$ as the number of elements in $A$.

\section{Point process convergence}\setcounter{equation}{0}\label{sec1}
We introduce the model that was briefly described in the introduction. Let $\x_1,\ldots,\x_p$ be iid $\R^n$-valued random vectors with $\x_i=(X_{i1}, \ldots, X_{in})^\top, i=1,\ldots,p$, where $p=p_n$ is some positive integer sequence tending to infinity as $\nto$.\\
We consider the random points 
\beam
T_{\iv}:=g_{n,p}(\x_{i_1},\x_{i_2},\ldots, \x_{i_m}),
\eeam
where $\iv=(i_1, i_2,\ldots, i_m)\in \{1,\ldots,p\}^m$ and $g_n=g_{n,p}:\R^{mn}\to \R$ is a measurable and symmetric function, where symmetric means $g_{n}(\y_1,\y_2,\ldots, \y_m)=g_{n}(\y_{\pi(1)},\y_{\pi(2)},\ldots, \y_{\pi(m)})$ for all $\y_1,\y_2,\ldots, \y_m \in \R^n$ and all permutations $\pi$ on $\{1,2,\ldots, m\}$.
We are interested in the limit behavior of the \pp es $ M_n$ towards a PRM\ $M$,
\beam\label{eq:conv}
 M_n=\sum_{1\le i_1<i_2<\ldots<i_m\le p} \vep_{(\iv/p,\,T_{\iv})} \std M\,,\qquad \nto\,,
\eeam
where $\iv/p=(i_1/p,\ldots,i_m/p)$. The limit $M$ is a PRM\ with mean measure 
\begin{align*}
\eta\Big(\otime{l=1}{m+1}(r_l,s_l)\Big)=m!{}\lambda_m\Big(\otime{l=1}{m}(r_l,s_l)\Big)\mu(r_{m+1},s_{m+1}),
\end{align*}
where $\lambda_m$ is the Lebesgue measure on $\R^m$. For an interval $(a,b)$ with $a<b\in\R$ we write $\mu(a,b):=\mu((a,b)):=\mu(a)-\mu(b)$ and $\mu:(v,w)\to \R^+=\{x\in \R:x\ge 0\}$ is a function satisfying $\lim_{x \to v} \mu(x)=\infty$ and $\lim_{x \to w} \mu(x)=0$ for $v,w\in\bar{\R}=\R\cup\{\infty, -\infty\}$ and $v<w$.
%(we write $\PRM(\eta)$)
Furthermore, we set $\eta_n(\cdot):=\E[M_n(\cdot)]$.
We consider the $M_n$'s and $M$ as random measures on the state space
\begin{align*}
S=S_1\times (v,w)=\{(z_1, z_2,\ldots, z_m): 0<z_1<z_2<\ldots<z_m\leq 1\}\times(v,w)
\end{align*}
with values in $\mathcal{M}(S)$ the space of point measures on $S$, endowed with the vague topology (see \cite{resnick:1987}). The following result studies the convergence $M_n \cid M$, which denotes the convergence in distribution in $\mathcal{M}(S)$.

\begin{theorem}\label{thm:maxg}
Let $\x_1,\ldots, \x_p$ be $n$-dimensional, independent and identically distributed random vectors and $p=p_n$ is some sequence of positive integers tending to infinity as $\nto$. Additionally, let $g=g_n:\R^{mn}\to (v,w)$ be a measurable and symmetric function, where $v,w\in\bar{\R}=\R\cup\{\infty, -\infty\}$ and $v<w$. Assume that there exists a function $\mu:(v,w)\to \R^+$ with $\lim_{x \to v} \mu(x)=\infty$ and $\lim_{x \to w} \mu(x)=0$ 
such that, for $x\in (v,w)$ and $\nto$,
\begin{itemize}
\item[(A1)] $\binom{p}{m} \P(g_n(\x_1,\x_2,\ldots, \x_m)>x) \to \mu(x)$ and
\item[(A2)] $\P(g_n\!(\x_1,\x_2,\ldots,\x_m)\!>\!x,g_n\!(\x_{m-l+1},\ldots, \x_{2m-l})\!>\!x)\!=\!o(p^{-(2m-l)})$ for \mbox{$l=\!1,\ldots, m\!-\!1$.} 
\end{itemize}
%\begin{itemize}
%\item[(A1)] $\binom{p}{m} \P(T>x) \to \mu(x)$ as $\nto$ for $x\in \R$ and
%\item[(A2)] $\P(T>x,T'>x)=o(p^{-(2m-1)})$.
%\end{itemize}
Then we have $M_n\cid M$.
%\begin{equation}\label{eq:maxg}
% \lim_{\nto} \P\Big(\max_{1\le i<j\le p} c_n(g(\x_i,\x_j)-b_n)\le x) = \exp(-\mu(x)) \,, \qquad x\in \R\,.
%\end{equation}
\end{theorem}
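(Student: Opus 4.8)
The standard route to convergence in distribution of a sequence of point processes to a Poisson random measure on a locally compact state space is via the Kallenberg-type criterion for Poisson limits (see \cite{resnick:1987, embrechts:kluppelberg:mikosch:1997}): for point processes on a state space $S$ with a dissecting semiring generating the Borel sets, it suffices to verify, for all relatively compact rectangles $R\subset S$ (and finite disjoint unions thereof), that $\E[M_n(R)]\to\eta(R)$ and $\P(M_n(R)=0)\to e^{-\eta(R)}$. Since the limit $\eta$ is a non-atomic, boundedly finite mean measure and the rectangles $\otime{l=1}{m+1}(r_l,s_l)$ form a convergence-determining class, my plan is to establish these two conditions, with the bulk of the work going into the second one via an inclusion–exclusion / Bonferroni argument controlling the dependence among the points $T_\iv$.

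First I would compute the intensity. By symmetry of $g_n$, $\eta_n(R) = \sum_{1\le i_1<\dots<i_m\le p}\P(\iv/p\in R_1,\ T_\iv\in R_{m+1})$, and for a rectangle $R=\otime{l=1}{m+1}(r_l,s_l)$ this factorizes: the number of ordered indices $i_1<\dots<i_m$ with $i_l/p\in(r_l,s_l)$ is asymptotically $\binom{p}{m}\cdot m!\,\lambda_m(\otime{l=1}{m}(r_l,s_l))$ (up to lower-order combinatorial corrections, using that $p\to\infty$), while $\P(T_\iv\in(r_{m+1},s_{m+1})) = \P(g_n(\x_1,\dots,\x_m)>r_{m+1}) - \P(g_n(\x_1,\dots,\x_m)>s_{m+1})$ for any fixed $\iv$. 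Applying (A1) to each of the two tail probabilities gives $\binom{p}{m}\P(T_\iv\in(r_{m+1},s_{m+1}))\to\mu(r_{m+1},s_{m+1})$, and combining yields $\eta_n(R)\to m!\,\lambda_m(\otime{l=1}{m}(r_l,s_l))\,\mu(r_{m+1},s_{m+1}) = \eta(R)$; additivity extends this to finite disjoint unions.

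Second, and this is the crux, I would show $\P(M_n(R)=0)\to e^{-\eta(R)}$. Writing $M_n(R) = \sum_{\iv}\1\{\iv/p\in R_1,\ T_\iv\in R_{m+1}\}$ as a sum of $\binom{p}{m}$-many indicator variables, I would apply the Bonferroni inequalities: $\P(M_n(R)=0)$ is sandwiched between partial sums of $\sum_{k\ge0}(-1)^k S_{n,k}/k!$ type, where $S_{n,k} = \sum \P(T_{\iv_1}\in\cdot,\dots,T_{\iv_k}\in\cdot)$ ranges over $k$-tuples of (distinct) index sets. The leading term is $S_{n,1}=\eta_n(R)\to\eta(R)$. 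For the higher-order terms one splits the sum over pairs (and $k$-tuples) of index sets $\iv_1,\iv_2$ according to $l=|\iv_1\cap\iv_2|$: when $l=0$ the two index sets are disjoint, the vectors involved are independent, and $\P(T_{\iv_1}\in\cdot,\ T_{\iv_2}\in\cdot)=\P(T_{\iv_1}\in\cdot)\P(T_{\iv_2}\in\cdot)$, so these terms reassemble into $\tfrac12\eta_n(R)^2$ plus negligible error; when $1\le l\le m-1$, there are $O(p^{2m-l})$ such pairs and each has probability $o(p^{-(2m-l)})$ by assumption (A2), so the total contribution vanishes. An induction on $k$ (or the standard moment method for Poisson convergence, showing factorial moments $\E[M_n(R)_{(k)}]\to\eta(R)^k$) then gives the Poisson limit; the disjoint-union case is handled analogously, tracking cross terms between different rectangles, which are again either products (disjoint indices) or $o$-terms (overlapping indices) by (A2).

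The main obstacle is the combinatorial bookkeeping in the Bonferroni/factorial-moment step: one must carefully enumerate, for each overlap pattern among $k$ index $m$-sets, how many configurations there are and show the count times the corresponding joint-tail bound is negligible whenever any two index sets share between $1$ and $m-1$ elements — assumption (A2) is tailored exactly for the pairwise ($k=2$) overlaps, and one needs the general-$k$ overlaps to be dominated by the pairwise ones (which holds because any $k$-fold joint probability is bounded by a $2$-fold one over an overlapping pair, together with crude independence bounds on the remaining "free" vectors). Once the negligibility of all non-trivial overlap terms is in hand, the argument closes as in the classical iid case, with $\eta_n(R)^k/k!$ surviving and everything else going to zero. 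A final routine step is to note that the state space $S$, though not compact, is handled by restricting to relatively compact rectangles bounded away from the boundary $\{z_1=0\}$ and from $w$, which suffices for vague convergence in $\mathcal M(S)$.
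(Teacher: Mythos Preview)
Your overall strategy coincides with the paper's: both invoke Kallenberg's criterion and verify (i) $\eta_n(R)\to\eta(R)$ and (ii) $\P(M_n(R)=0)\to e^{-\eta(R)}$ on finite unions of rectangles, and your intensity computation is essentially the paper's.

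The divergence is in step (ii). The paper does \emph{not} use Bonferroni/factorial moments; it uses the Stein--Chen method (following \cite{dabrowski2002poisson}). Writing $P_n$ for the Poisson p.m.f.\ with mean $\eta_n(R)$, the paper bounds $|\P(M_n(R)=0)-P_n(0)|$ via the Stein equation, and the resulting expression splits into two sums that involve only \emph{pairwise} quantities, namely $\sum_{\kv}\eta_{\kv}\,\E[M_n^{(2)}(\kv)]$ and $\sum_{\kv}\E[I_{\kv}\,(M_n^{(2)}(\kv)-1)]$, where $M_n^{(2)}(\kv)=I_{\kv}+\sum_{\lv\in D_{2\kv}}I_{\lv}$ collects indicators over index sets sharing at least one coordinate with $\kv$. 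Both sums are then controlled directly by (A1) and (A2). The point is that Stein--Chen is structurally a second-order method: no $k$-wise joint probabilities with $k\ge 3$ ever appear.

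Your Bonferroni route, by contrast, needs $S_k\to\eta(R)^k/k!$ for every fixed $k$, hence the ``bad'' part of $S_k$ (at least two of the $\iv_j$ overlapping) must be $o(1)$. Your justification --- bound a $k$-fold probability by a $2$-fold one on an overlapping pair and use independence on the remaining ``free'' index sets --- breaks down for configurations with no free index sets. A concrete obstruction already at $k=3$: for $m=3$ take $\iv_1=\{1,2,3\}$, $\iv_2=\{1,4,5\}$, $\iv_3=\{2,6,7\}$. Here $\iv_2\cap\iv_3=\emptyset$ but both meet $\iv_1$, so nothing is free; there are of order $p^{7}$ such triples, while from (A1)+(A2) you can extract at best
\[
\P(A_{\iv_1}\cap A_{\iv_2}\cap A_{\iv_3})\le \min\bigl\{\P(A_{\iv_1}\cap A_{\iv_2}),\,\P(A_{\iv_2})\P(A_{\iv_3})\bigr\}=O(p^{-6}),
\]
and even a Cauchy--Schwarz refinement using $\E[\P(A_{\iv_2}\mid\x_1)^2]=o(p^{-5})$ only yields $o(p^{-13/2})$, not the $o(p^{-7})$ you need. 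Nothing in (A2) controls such ``hub'' or ``chain'' patterns. This is precisely why the paper (and the Poisson-approximation literature for locally dependent arrays) uses Stein--Chen: it bypasses the higher-order combinatorics entirely.
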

%In particular, if $\mu(x)=\e^{-x}$, then the limit distribution is the standard Gumbel distribution with distribution function $\exp(-\e^{-x}), x\in \R$. Other possibilities are \Frechet and Weibull with $\mu=$... Discuss properties of $\mu$.

 Note that (A1) ensures the correct specification of the mean measure, while (A2) is an anti-clustering condition. Both conditions are standard in extreme value theory. It is worth mentioning that  
$$\limn \P\Big(\max_{1\le i_1<i_2<\ldots<i_m\le p} T_{\iv}\le x\Big)=\exp(-\mu(x))=:H(x)\,, \qquad x\in \R\,,$$
where we use the conventions $\mu(x)=0$ if $x>w$, $\mu(x)=\infty$ if $x<v$, and $\exp(-\infty)=0$.
The typical distribution functions $H$ are the Fr\'{e}chet, Weibull and Gumbel distributions. 
\begin{comment}
In the case of an iid sequence of random variables $(X_i)_i$, assumption (A1) in Theorem \ref{thm:maxg} is sufficient and these three distributions and their types are the only ones, for which there exist suitable sequences $c_n>0$ and $d_n\in \R$ such that for every $x\in \operatorname {supp}(H)$ the convergence
\begin{align*}
\P(c_n^{-1}\Big(\max_{i=1,\ldots,n}X_i-d_n\Big)\leq x)\to H(x),
\end{align*}
as $n\to\infty$, holds. If the random variables are not iid, further assumptions on the dependency structure are needed in addition.
\end{comment}
In these cases, the limiting process $M$ has a representation in terms of the transformed points of a homogeneous Poisson process.
Let $(U_i)_i$ be an iid sequence of random vectors uniformly distributed on $S_1$ and $\Gamma_i=E_1+\ldots+E_i$, where $(E_i)_i$ is an iid sequence of standard exponentially distributed random variables, independent of $(U_i)_i$.\\
%Additionally, we set
%$N_{\Gamma}=\sum_{i=1}^\infty \varepsilon_{\Gamma_i}$. 
It is well--known that $N_\Gamma:=\sum_{i=1}^\infty \varepsilon_{\Gamma_i}$ is a homogeneous Poisson process and hence it holds for every $A\subset(0,\infty)$ that $N_{\Gamma}(A)$ is Poisson distributed with parameter $\lambda_1(A)$ (see for example \cite[Example 5.1.10]{embrechts:kluppelberg:mikosch:1997}). For the mean measure $\eta$ of $M$ we get for a product of intervals $\otime{l=1}{m}(r_l,s_l]\subset S_1$
\begin{align*}
\eta\Big(\otime{l=1}{m+1}(r_l,s_l]\Big)&=%\jh{\E\left[M\Big(\otime{l=1}{m+1}(r_l,s_l]\Big)\right]}=
m!\lambda_m\Big(\otime{l=1}{m}(r_l,s_l]\Big)(\mu(r_{m+1})-\mu(s_{m+1}))\\
&=\sum_{i=1}^\infty\P\Big(U_i\in \otime{l=1}{m}(r_l,s_l]\Big)\E[\varepsilon_{\Gamma_i}(\mu(s_{m+1}),\mu(r_{m+1})]\\
%&=\sum_{i=1}^\infty\P\Big(U_i\in \otime{l=1}{m}(r_l,s_l]\quad\text{and}\quad \Gamma_i^{-1/\alpha}\in (r_{m+1}^{-\alpha},s_{m+1}^{-%\alpha}]\Big)\\
&=\E\Big[\sum_{i=1}^\infty\varepsilon_{(U_i,\Gamma_i)}\Big(\otime{l=1}{m}(r_l,s_l]\times(\mu(s_{m+1}),\mu(r_{m+1})]\Big)\Big],
\end{align*}
where we used in the second line that 
\begin{align*}
\P\Big(U_i\in \otime{l=1}{m}(r_l,s_l]\Big)=m!\prod_{l=1}^m(s_l-r_l)
\end{align*}
as $U_i$ is uniformly distributed on $S_1$ for every $i$ and
\begin{align*}
\E[N_\Gamma(\mu(s_{m+1}),\mu(r_{m+1}))]=\lambda_1(\mu(s_{m+1}),\mu(r_{m+1})).    
\end{align*}

We get the following representations for the limiting processes $M$. 
\begin{itemize}
\item Fr\'{e}chet case:
For $\alpha>0$ the Fr\'{e}chet distribution is given by $\Phi_\alpha(x)=\exp({-x^{-\alpha}})$, \mbox{$x>0$}. For $0<r<s<\infty$ we have $\mu_{\Phi_\alpha}(r,s]=r^{-\alpha}-s^{-\alpha}$ and therefore, we can write
\begin{align*}
M=M_{\Phi_\alpha}=\sum_{i=1}^\infty\varepsilon_{(U_i,\Gamma_i^{-1/\alpha})}.
\end{align*}
\item Weibull case:
For $\alpha>0$ the Weibull distribution is given by $\Psi_\alpha(x)=\exp({-|x|^\alpha})$, \mbox{$x<0$}. For $-\infty<r<s<0$ we have $\mu_{\Psi_{\alpha}}(r,s]=|r|^\alpha-|s|^\alpha$ and
\begin{align*}
M=M_{\Psi_\alpha}=\sum_{i=1}^\infty\varepsilon_{(U_i,-\Gamma_i^{1/\alpha})}.
\end{align*}
\item Gumbel case:
The Gumbel distribution is given by $\Lambda(x)=\exp({-\e^{-x}})$ for all $x\in\R$. For $-\infty<r<s<\infty$ we have $\mu_{\Lambda}(r,s]=\e^{-r}-\e^{-s}$ and
\begin{align*}
M=M_\Lambda=\sum_{i=1}^\infty\varepsilon_{(U_i,-\log \Gamma_i)}.
\end{align*}
\end{itemize}

Besides the points $T_\iv$, time components $\iv/p=(i_1/p,\ldots,i_m/p)$ with $1\leq i_1\leq\ldots\leq i_m\leq p$ are considered in the definition of the point process $M_n$.
Whenever we do not need the time components in the following, we will use the shorthand notation
\begin{align}\label{eq:defNnsdfsddv}
N_n(\cdot):=M_n(S_1\times\cdot)=\sum_{1\leq i_1\leq\ldots\leq i_m\leq p}\varepsilon_{T_\iv}(\cdot).
\end{align} 
Under the conditions of Theorem \ref{thm:maxg}, $N_n$ converges in distribution to $N(\cdot):=M(S_1\times\cdot)$ which is a PRM with mean measure $\mu$.

A direct consequence of the point process convergence is the convergence of the joint distribution of a fixed number of upper order statistics. In the Fr\'{e}chet, Weibull and Gumbel cases the limit function can be described as the joint distribution function of transformations of the points $\Gamma_i$.
\begin{corollary}\label{cor:ssdgsd}
Let $G_{n,(j)}$ be the $j$-th upper order statistic of the random variables \\$(g_n(\x_{i_1},\x_{i_2},\ldots,\x_{i_m}))$, where $1\leq i_1<i_2<\ldots<i_m\leq p$. Under the conditions of Theorem~\ref{thm:maxg} and for a fixed $k\ge 1$ the distribution function
\begin{align*}
\P(G_{n,(1)}\leq x_1,\ldots,G_{n,(k)}\leq x_k), 
\end{align*}
where $x_k<\ldots<x_1\in (v,w)$, converges to 
\begin{align*}
\P\Big(N(x_1,w)= 0, N(x_2,w)\leq 1\ldots,N(x_k,w)\leq k-1\Big),
\end{align*}
as $n\to\infty$. In particular, in the Fr\'{e}chet, Weibull and Gumbel cases, it holds that 
\begin{align*}
 &\P\Big(N(x_1,w)= 0, N(x_2,w)\leq 1\ldots,N(x_k,w)\leq k-1\Big)\\
 &= \begin{cases}
  \P(\Gamma_1^{-1/\alpha}\leq x_1,\ldots, \Gamma_k^{-1/\alpha}\leq x_k),\quad &\text{if}\,\,\,\mu=\mu_{\Phi_\alpha}, \\
  \P(-\Gamma_1^{1/\alpha}\leq x_1,\ldots, -\Gamma_k^{1/\alpha}\leq x_k),\quad &\text{if}\,\,\,\mu=\mu_{\Psi_\alpha},\\
  \P(-\log \Gamma_1\leq x_1,\ldots, -\log \Gamma_k\leq x_k),\quad &\text{if}\,\,\,\mu=\mu_{\Lambda}.
 \end{cases}
\end{align*}
\begin{comment}
\begin{itemize}
\item Fr\'{e}chet case:
$\quad\P(\Gamma_1^{-1/\alpha}\leq x_1,\ldots, \Gamma_k^{-1/\alpha}\leq x_k)$,\\
\item Weibull case:
$\quad\P(-\Gamma_1^{1/\alpha}\leq x_1,\ldots, -\Gamma_k^{1/\alpha}\leq x_k)$,\\
\item Gumbel case:
$\quad\P(-\log \Gamma_1\leq x_1,\ldots, -\log \Gamma_k\leq x_k)$.
\end{itemize}
\end{comment}
\end{corollary}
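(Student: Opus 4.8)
\textbf{Proof proposal for Corollary~\ref{cor:ssdgsd}.}

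The plan is to reduce the joint distribution of the top-$k$ order statistics to finitely many events of the point process $N_n$, then invoke $N_n \cid N$ established in Theorem~\ref{thm:maxg}. First I would record the elementary but crucial identity linking order statistics to counting measures: for each $j$ and any threshold $x$, the event $\{G_{n,(j)} \le x\}$ coincides with $\{N_n(x,w) \le j-1\}$, because $G_{n,(j)} \le x$ means that at most $j-1$ of the values $g_n(\x_{i_1},\ldots,\x_{i_m})$ exceed $x$. Since $x_k < \ldots < x_1$, the events are nested in the sense that controlling $G_{n,(j)} \le x_j$ for each $j$ is exactly controlling $N_n(x_j, w) \le j-1$ simultaneously, so
\begin{align*}
\P(G_{n,(1)}\le x_1,\ldots,G_{n,(k)}\le x_k) = \P\big(N_n(x_1,w)=0,\, N_n(x_2,w)\le 1,\ldots,N_n(x_k,w)\le k-1\big).
\end{align*}

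Next I would pass to the limit. The map $\xi \mapsto (\xi(x_1,w),\ldots,\xi(x_k,w))$ from $\mathcal M((v,w))$ to $\N_0^k$ is continuous at every $\xi$ that puts no mass on the boundary points $\{x_1\},\ldots,\{x_k\}$; since $N$ is a PRM with a mean measure $\mu$ that is non-atomic (it is the pushforward built from Lebesgue measure on the $\Gamma_i$ side), almost every realization of $N$ avoids these boundaries, so the set of discontinuities is $N$-null. By the continuous mapping theorem together with $N_n \cid N$ (the remark following Theorem~\ref{thm:maxg}), the random vector $(N_n(x_1,w),\ldots,N_n(x_k,w))$ converges in distribution to $(N(x_1,w),\ldots,N(x_k,w))$, and since the target event $\{N(x_1,w)=0,\ldots,N(x_k,w)\le k-1\}$ is a continuity set for this limit law (the limit being $\N_0^k$-valued with no atoms forced onto the boundary of the event), we get the asserted convergence. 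Alternatively, one may avoid the continuous mapping theorem and use directly that $\{0\}\times\{0,1\}\times\cdots\times\{0,\ldots,k-1\}$ is a bounded set and the relevant sets $(x_j,w)$ are bounded away from $w$ with $\mu$-continuous (indeed $\mu$-null) boundaries, so the defining convergence of finite-dimensional distributions of point processes in the vague topology applies.

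For the last part, in the Fr\'echet, Weibull and Gumbel cases I would substitute the explicit series representations of $M$, hence of $N(\cdot)=M(S_1\times\cdot)$, derived just before the corollary. In the Fr\'echet case $N = \sum_{i\ge 1}\varepsilon_{\Gamma_i^{-1/\alpha}}$, so $N(x_j,w) = |\{i: \Gamma_i^{-1/\alpha} > x_j\}|$; since $(\Gamma_i)_i$ is strictly increasing, $\Gamma_i^{-1/\alpha}$ is strictly decreasing, and $N(x_j,w)\le j-1$ for all $j=1,\ldots,k$ is equivalent to $\Gamma_j^{-1/\alpha}\le x_j$ for all $j=1,\ldots,k$ (the $j$-th largest point is $\Gamma_j^{-1/\alpha}$). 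This yields $\P(\Gamma_1^{-1/\alpha}\le x_1,\ldots,\Gamma_k^{-1/\alpha}\le x_k)$; the Weibull and Gumbel cases are identical after replacing the monotone transformation by $-\Gamma_i^{1/\alpha}$ and $-\log\Gamma_i$ respectively, both of which are again monotone in $i$ so that the $j$-th largest point is the one indexed by $\Gamma_j$. The only point requiring a word of care — and the main (mild) obstacle — is justifying that the boundary-avoidance/continuity-set condition genuinely holds, i.e.\ that $\mu$ assigns zero mass to each singleton $\{x_j\}$; this follows because in each of the three named cases $\mu$ is continuous on $(v,w)$, and more generally one should note that the corollary's thresholds $x_j$ can be taken in the (co-countable) set of continuity points of $\mu$, or observe that $\mu$ is monotone hence has at most countably many atoms which can be excluded.
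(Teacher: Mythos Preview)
Your proposal is correct and follows essentially the same route as the paper: rewrite the joint order-statistic event as $\{N_n(x_1,w)=0,\ldots,N_n(x_k,w)\le k-1\}$, invoke $N_n\cid N$ from Theorem~\ref{thm:maxg}, and then read off the explicit forms from the series representations of $N$ in the three named cases. The paper's proof is more terse---it simply cites Theorem~\ref{thm:maxg} for the passage to the limit without spelling out the continuous-mapping/continuity-set justification you give---so your added care about $\mu$-null boundaries is a welcome elaboration rather than a different approach.
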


\begin{proof}
%To simplify the notation we set $\tilde{p}=\binom{p}{m}$ and $G_{n,(j)}=0$ for $j\geq \tilde{p}$.
Since $N_n(x,w)$ is the number of vectors $\iv=(i_1,\ldots,i_m)$ with $1\le i_1<i_2<\ldots<i_m\le p$, for which $g_n(\x_{i_1},\x_{i_2},\ldots,\x_{i_m})\in (x,w)$, we get by Theorem \ref{thm:maxg} as $n\to\infty$
%\begin{align*}
%\P(G_{n,(1)}\leq x_1,\ldots,G_{n,(k)}\leq x_k)=&\P\Big(N_n(x_1,w)= 0,\ldots,N_n(x_k,w)\leq k-1\Big)\\
%&-\sum_{j=1}^{\tilde{p}}\P(G_{n,(i)}>w,G_{n,(i+1)}\leq x_1,\ldots,G_{n,(i+k)}\leq x_k).
%\end{align*}
%For the sum we have
%\begin{align*}
%\sum_{j=1}^{\tilde{p}}\P(G_{n,(i)}>w,G_{n,(i+1)}\leq x_1,\ldots,G_{n,(i+k)}\leq x_k)\leq \tilde{p}\,\P(g_n(\x_1,\ldots \x_m)>w),
%\end{align*}
%which converges to zero as $n\to\infty$ by %assumption (A1).
%As $n\to\infty$ by Theorem \ref{thm:maxg} it holds that
{\small
\begin{align}
\P(G_{n,(1)}\leq x_1,\ldots,G_{n,(k)}\leq x_k)&=\P\Big(N_n(x_1,w)= 0,N_n(x_2,w)\leq 1,\ldots,N_n (x_k,w)\leq k-1\Big) \nonumber\\
&\to\P\Big(N(x_1,w)= 0,N(x_2,w)\leq 1\ldots,N(x_k,w)\leq k-1\Big).
\label{df}
\end{align}}
By the representation of the limiting point process in the Fr\'{e}chet, Weibull and Gumbel cases, \eqref{df} is equal to one of the three distribution functions in the corollary.
\end{proof}
 One field, where point processes find many applications, is stochastic geometry. The paper \cite{schulte:thaele:2012}, for example, considers order statistics for Poisson $k$-flats in $\R^d$, Poisson polytopes on the unit sphere and random geometric graphs.

%\begin{corollary}\label{cor:ordg}
%%Set $\tilde{p}:=p(p-1)\cdots (p-m+1)(m!)^{-1}$. 
%For a fixed $j\in\N$ let $(g(\x_{i_1},\x_{i_2},\ldots,\x_{i_m}))_{(j)}$ be the $j$'th upper order statistic of the random variables $(g(\x_{i_1},\x_{i_2},\ldots,\x_{i_m}))$, where $1\leq i_1<i_2<\ldots<i_m\leq p$. Under the conditions of Theorem \ref{thm:maxg} we get
%\begin{align*}%\label{eq:ordg}
% \lim_{\nto} \P\Big(c_n((g(\x_{i_1},\x_{i_2},\ldots,\x_{i_m}))_{(j)}-b_n)\le x) = \sum_{i=0}^{j}\frac{\mu(x)^i}{i!} e^{-\mu(x)} \,, \qquad x\in \R\,.
%\end{align*}
%\end{corollary}

%\begin{proof}
%Since $N_n(S_1\times (x,w))$ is the number of vectors $\iv=(i_1,\ldots,i_m)$ with $1\le i_1<i_2<\ldots<i_m\le p$, for which $c_n(g(\x_{i_1},\x_{i_2},\ldots,\x_{i_m})-b_n)> x$ holds, we get
%\begin{align*}
%\P\Big(c_n(g(\x_{i_1},\x_{i_2},\ldots,\x_{i_m}))_{(\tilde{p}-j)}-b_n)> x)=\P\Big(N_n(S_1\times (x,w))> j\Big).
%\end{align*}
%According to $\eta(S_1\times (x,w))=\mu(x)$, we have by Theorem \ref{thm:maxg} as $n\to\infty$
%\begin{align*}
%\P\Big(N_n(S_1\times (x,w))> j\Big)\to\P\Big(N(S_1\times (x,w))> j\Big)=1-\sum_{i=0}^{j}\frac{\mu(x)^i}{i!} e^{-\mu(x)}.
%\end{align*}
%\end{proof}
Setting $k=1$ in Corollary~\ref{cor:ssdgsd} we obtain the convergence in distribution of the maximum of the points $T_\iv$.
\begin{corollary}\label{cor:maxg}
Under the conditions of Theorem~\ref{thm:maxg} we get 
\begin{align*}%\label{eq:maxg}
 \lim_{\nto} \P\Big(\max_{1\le i_1<i_2<\ldots<i_m\le p} g_n(\x_{i_1},\x_{i_2},\ldots,\x_{i_m})\le x\Big) = \exp(-\mu(x)) \,, \qquad x\in \R\,.\\
\end{align*}
\end{corollary}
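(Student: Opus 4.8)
The plan is to obtain Corollary~\ref{cor:maxg} as an essentially immediate consequence of the point process convergence already in hand. For $x\in(v,w)$ I would simply specialise Corollary~\ref{cor:ssdgsd} to $k=1$ with $x_1=x$. Since $\max_{1\le i_1<i_2<\ldots<i_m\le p}g_n(\x_{i_1},\x_{i_2},\ldots,\x_{i_m})=G_{n,(1)}$, that corollary yields
\[
\P\Big(\max_{1\le i_1<i_2<\ldots<i_m\le p}g_n(\x_{i_1},\x_{i_2},\ldots,\x_{i_m})\le x\Big)\longrightarrow \P\big(N(x,w)=0\big),\qquad \nto .
\]
Then, because $N$ is a PRM with mean measure $\mu$, the variable $N(x,w)$ is Poisson with parameter $\mu(x,w)=\mu(x)-\mu(w)$, and the normalisation $\lim_{y\to w}\mu(y)=0$ gives $\mu(w)=0$, so $\P(N(x,w)=0)=\exp(-\mu(x))$. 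This already closes the case $x\in(v,w)$.

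If one wants a proof that does not route through Corollary~\ref{cor:ssdgsd}, I would argue directly from $N_n\cid N$ (stated right after Theorem~\ref{thm:maxg}) via the identity $\{\max_{1\le i_1<\ldots<i_m\le p}T_{\iv}\le x\}=\{N_n(x,w)=0\}$, with $N_n$ as in \eqref{eq:defNnsdfsddv}. The only delicate point — and the one I expect to be the main obstacle — is that when $w=\infty$ the half-line $(x,w)$ is not relatively compact in the state space $(v,w)$, so the counting functional $\nu\mapsto\nu(x,w)$ is not continuous in the vague topology and $N_n\cid N$ cannot be applied to it verbatim. I would handle this by the usual truncation: fix $w'\in(x,w)$ at a continuity point of the monotone function $\mu$ (all but countably many $w'$ qualify), so that $(x,w')$ is relatively compact with $N$-null boundary and hence $\P(N_n(x,w')=0)\to\exp\!\big(-(\mu(x)-\mu(w'))\big)$; then sandwich
\[
\P\big(N_n(x,w')=0\big)\ \ge\ \P\big(N_n(x,w)=0\big)\ \ge\ \P\big(N_n(x,w')=0\big)-\E\big[N_n((w',w))\big].
\]
By (A1), $\E[N_n((w',w))]=\binom{p}{m}\P\big(g_n(\x_1,\x_2,\ldots,\x_m)\in(w',w)\big)\to\mu(w')-\mu(w)=\mu(w')$, which vanishes as $w'\uparrow w$. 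Letting $\nto$ and then $w'\uparrow w$ along continuity points of $\mu$ squeezes $\P(N_n(x,w)=0)$ to $\exp(-\mu(x))$.

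Finally I would dispose of $x\notin(v,w)$ using that $g_n$ takes values in the \emph{open} interval $(v,w)$: for $n$ large enough that $p\ge m$ one has $v<\max_{1\le i_1<\ldots<i_m\le p}T_{\iv}<w$ almost surely, so the probability in question equals $1$ when $x\ge w$ and $0$ when $x\le v$; these agree with $\exp(-\mu(x))$ under the stated conventions $\mu(x)=0$ for $x\ge w$ and $\mu(x)=\infty$ for $x\le v$. Overall the argument is short; apart from the relative-compactness workaround in the self-contained version (which is entirely standard, cf.\ the truncation arguments in extreme value theory), there are no real obstacles, and that workaround is sidestepped altogether if one is content to cite Corollary~\ref{cor:ssdgsd}.
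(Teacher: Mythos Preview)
Your proposal is correct and matches the paper's approach exactly: the paper derives Corollary~\ref{cor:maxg} simply by setting $k=1$ in Corollary~\ref{cor:ssdgsd}, just as you do in your first paragraph. Your additional self-contained argument via truncation and the explicit handling of $x\notin(v,w)$ go beyond what the paper spells out, but they are sound and do not deviate from the paper's route.
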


\begin{figure}[h]
    \centering
    \includegraphics[width=0.5\textwidth]{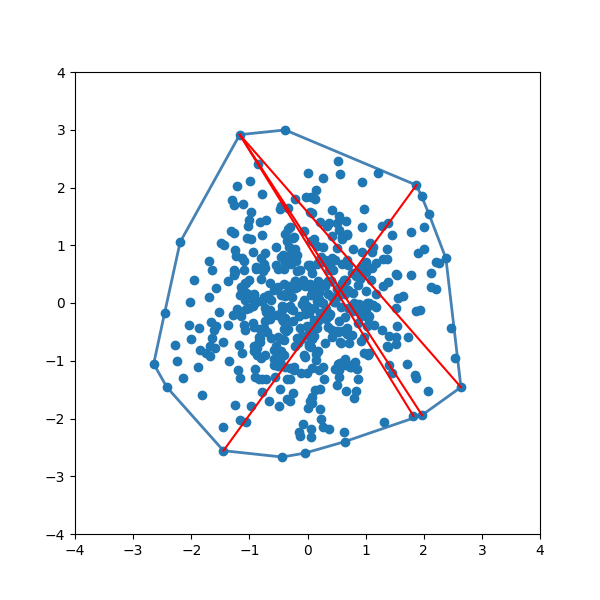}
    \caption{Four largest distances between 500 normal distributed points}
    \label{fig:my_label}
\end{figure}
\begin{example}[Interpoint distances]\em{
Let $\x_i=(X_{i1}, \ldots, X_{in})^\top, i=1,\ldots,p$ be $n$-dimensional random vectors, whose components $(X_{it})_{i,t\ge 1}$ are independent and identically distributed random variables with zero mean and variance $1$.
We are interested in the asymptotic behavior of the largest interpoint distances
\begin{equation}\label{eq:defDij}
D_{ij}= \| \x_i -\x_j \|^2_2= \sum_{t=1}^n(X_{it}-X_{jt})^2\,, \qquad 1\le i <j\le p\,,
\end{equation} 
where $\|\cdot\|_2$ is the Euclidean norm on $\R^n$. Figure~\ref{fig:my_label} shows the four largest interpoint distances of $500$ points on $\R^2$ with independent standard normal distributed components. Note that three of the largest four distances involve the same outlying vector $\x_i$.

We assume that there exists $s>2$ \st\ $\E[|X_{11}|^{2s}(\log(|X_{11}|))^{s/2}]< \infty$ and $\E[X_{11}^4]\leq 5$ and that $p=p_n\to\infty$ satisfies $p=O(n^{(s-2)/4})$. Additionally, we let $(b_n)_n$ and $(c_n)_n$ be sequences given by
\begin{equation*}
b_n=2n+\sqrt{2n(\E[X^4]+1)}d_n\quad \text{ and } \quad c_n=\frac{d_n}{\sqrt{2n(\E[X^4]+1)}}\,,
\end{equation*}
where $d_n=\sqrt{2\log \tilde{p}} - \tfrac{\log\log \tilde{p}+\log 4\pi}{2(2\log \tilde{p})^{1/2}}$
with $\tilde{p}=p(p-1)/2$. For $x\in \R$ one can check that
$$\tilde{p}\, \P\big( c_n (D_{12}-b_n)>x\big) \to \e^{-x} \quad \text{and} \quad
\P\big( c_n (D_{12}-b_n)>x, c_n(D_{23}-b_n)>x\big)=o(p^{-3})$$
as $\nto$ (see \cite{heiny2023maximum} for details). Therefore, the conditions (A1) and (A2) in Theorem~\ref{thm:maxg} hold for $m=2$, $g_n(\x_i,\x_j)=c_n (D_{ij}-b_n)$ and $\mu(x)=\e^{-x}$. By virtue of Theorem \ref{thm:maxg} we have 
\begin{align*}
\sum_{1\leq i<j\leq p}\varepsilon_{c_n (D_{ij}-b_n)}\cid N_\Lambda=\sum_{i=1}^\infty\varepsilon_{-\log \Gamma_i}.
\end{align*} 
Finally Corollary~\ref{cor:ssdgsd} yields for a fixed $k\geq 1$ that
\begin{align*}
(D_{n,(1)},\ldots,D_{n,(k)})\cid (-\log \Gamma_1,\ldots, -\log \Gamma_k), 
\end{align*}
where $D_{n,(\ell)}$ is the $\ell$-th upper order statistic of the random variables $c_n(D_{ij}-b_n)$ for $1\leq i<j\leq p$.}
\end{example}

\subsection*{Record times}
In Theorem \ref{thm:maxg} we showed convergence of point processes including time components. Therefore, we can additionally derive results for the record times $L(k), k\geq 1$ of the running maxima of the points $T_\iv=g_n(\x_{i_1},\x_{i_2},\ldots,\x_{i_m})$
    for $\iv=(i_1,\ldots,i_m)$, which are recursively defined as follows:
\begin{align*}
    L(1)&=1\,,\\
    L(k+1)&=\inf\{j>L(k):\max\limits_{1\le i_1<\ldots<i_m\le j} T_\iv>\max\limits_{1\le i_1<\ldots<i_m\le L(k)} T_\iv\},\qquad k\in\N,
\end{align*}
(c.f.~Sections 5.4.3 and 5.4.4 of \cite{embrechts:kluppelberg:mikosch:1997}). To prove point process convergence for the record times we need the convergence in distribution of the sequence of processes $(Y_n(t), 0<t\leq 1)$ in $D(0,1]$, the space of right continuous functions on
$(0,1]$ with finite limits existing from the left, defined by
\begin{align*}
    Y_n(t)=\begin{cases} \max\limits_{1\le i_1<i_2<\ldots<i_m\le \lfloor pt\rfloor} g_n(\x_{i_1},\x_{i_2},\ldots,\x_{i_m}),\quad & \frac{m}{p}\leq t\leq 1\\
    g_n(\x_{1},\x_{2},\ldots,\x_{m}), & \text{otherwise},
    \end{cases}
\end{align*}
where  $\lfloor x\rfloor=\max\{y\in\Z:y\leq x\}$ for $x\in\R$, towards an extremal process. We call $Y=(Y(t))_{t>0}$ an extremal process generated by the distribution function $H$, if the finite-dimensional distributions are given by 
\begin{align}\label{extremal}
    \P(Y(t_1)\leq x_1,\ldots, Y(t_k)\leq x_k)=H^{t_1}\Big(\bigwedge_{i=1}^k x_i\Big)H^{t_2-t_1}\Big(\bigwedge_{i=2}^k x_i\Big)\ldots H^{t_k-t_{k-1}}( x_k),
\end{align}
where $k\geq 1$, $0<t_1<\ldots<t_k$, $x_i\in \R$ and $1\leq i\leq k$ (see \cite[Definition 5.4.3]{embrechts:kluppelberg:mikosch:1997}).
To define convergence in distribution in $D(0,1]$ we first need to introduce a metric $\mathcal{D}$ on $D(0,1]$. %On $D[0,1]$ the Skorohod metric $\tilde{\mathcal{D}}$ is defined in the following way (see \cite[Section 12]{billingsley:1999}). 
To this end, let $\Lambda_{[0,1]}$ be a set of homeomorphisms
\begin{align*}
    \Lambda_{[0,1]}=\{\lambda:[0,1]\to[0,1]: \lambda(0)=0, \lambda(1)=1, \lambda \text{ is continuous and strictly increasing}\}.
\end{align*}
Then for $f,g\in D[0,1]$ the Skorohod metric $\tilde{\mathcal{D}}$ is defined by (see \cite[Section 12]{billingsley:1999})
\begin{align*}
    \tilde{\mathcal{D}}(f,g):=\inf\{&\epsilon>0: \text{there exists a}\,\, \lambda\in\Lambda_{[0,1]}\,\, \text{such that}\\
    &\sup_{0\leq t\leq 1}|\lambda(t)-t|\leq \epsilon,\,\sup_{0\leq t\leq 1}|f(t)-g(\lambda(t))|\leq \epsilon\}.
\end{align*}
Now set
\begin{align*}
    \mathcal{D}(f,g):=\tilde{\mathcal{D}}(\tilde{f},\tilde{g}),\quad f,g\in D(0,1],
\end{align*}
where $\tilde{f}$ and $\tilde{g}$ are the right continuous extensions of $f$ and $g$ on $[0,1]$.The space of functions $D[0,1]$ and therefore $D(0,1]$ is separable under the Skorohod metric but not complete. However, one can find an equivalent metric, i.e., a metric which generates the same Skorohod topology, under which $D[0,1]$ is complete (see \cite[Theorem 12.2]{billingsley:1999}). In particular, the Skorohod metric and the equivalent metric generate the same open sets and thus the $\sigma$-algebras of the Borel sets, which are generated by these open sets, are the same. Therefore, a sequence of probability measures on $D(0,1]$ is relatively compact if and only if it is tight \cite[Section 13]{billingsley:1999}. Hence, for every tight sequence of probability measures on $D(0,1]$ the convergence of the finite dimensional distributions on all continuity points of the limit distribution implies convergence in distribution \cite[Theorem 13.1]{billingsley:1999}.

For the PRM $M=\sum_{i=1}^\infty\varepsilon_{(U_i,\Delta_i)}$, where $(U_i)_i$ is an iid sequence of random vectors uniformly distributed on $S_1$ and
\begin{align*}
    \Delta_i=\begin{cases}
        -\log(\Gamma_i)\quad &\text{if}\,\,\, H=\Lambda,\\
        \Gamma_i^{- 1/\alpha}\quad &\text{if}\,\,\, H=\Phi_\alpha,\\
        -\Gamma_i^{1/\alpha}\quad &\text{if}\,\,\, H=\Psi_\alpha,
    \end{cases}
\end{align*}
we set
\begin{align*}
    Y(t)=\sup\{\Delta_i:U_i^{(m)}\leq t, i\geq 1\}\,,\qquad t\in (0,1]\,,
\end{align*}
where $U_i^{(m)}$ is the $m$-th component of $U_i$. Then the process $Y$ has the finite dimensional distributions in \eqref{extremal}
%\begin{align*}
%    H_{t_1,\ldots, t_k}(x_1,\ldots, x_k)&=\P(Y(t_1)\leq x_1,\ldots, Y(t_k)\leq x_k)\\
%    &=H^{t_1}\Big(\bigwedge_{i=1}^k x_i\Big)H^{t_2-t_1}\Big(\bigwedge_{i=2}^k x_i\Big)\ldots H^{t_k-t_{k-1}}( x_k),
%\end{align*}
for $k\geq 1$, $0<t_i\leq 1$, $x_i\in \R$ and $1\leq i\leq k$. Therefore, $Y$ is an extremal process generated by $H$ restricted to the interval $(0,1]$.
For these processes we can show the following invariance principle by application of the continuous mapping theorem (see \cite[Theorem 2.7]{billingsley:1999} or \cite[p. 152]{resnick:1987}).
\begin{proposition}\label{prop:extr}
Under the conditions of Theorem \ref{thm:maxg} and if $H(\cdot)=\exp(-\mu(\cdot))$ is an extreme value distribution it holds that
\begin{align*}
    Y_n\cid Y\,, \qquad \nto\,,
\end{align*}
in $D(0,1]$ with respect to the metric $\mathcal{D}$. 
\end{proposition}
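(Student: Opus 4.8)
The plan is to obtain $Y_n \cid Y$ in $(D(0,1],\mathcal{D})$ as a consequence of the point process convergence $M_n \cid M$ from Theorem~\ref{thm:maxg}, via a continuous-mapping argument. First I would express $Y_n$ as the image of $M_n$ under a suitable functional. For a point measure $\nu = \sum_k \varepsilon_{(z^{(k)}, y^{(k)})}$ on $S = S_1 \times (v,w)$ with $z^{(k)} = (z^{(k)}_1,\ldots,z^{(k)}_m)$, define
\begin{align*}
\Phi(\nu)(t) = \sup\{ y^{(k)} : z^{(k)}_m \le t \}\,, \qquad t \in (0,1]\,,
\end{align*}
with the convention that the supremum of the empty set is handled by the boundary value built into $Y_n$ (i.e.\ the value $g_n(\x_1,\ldots,\x_m)$ for small $t$; in the limit this boundary contribution is asymptotically negligible on any fixed $[\delta,1]$). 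Since $T_{\iv}$ is attached in $M_n$ to the time coordinate $\iv/p$ whose last component is $i_m/p$, one checks directly that $\Phi(M_n)(t) = \max_{1\le i_1 < \ldots < i_m \le \lfloor pt\rfloor} g_n(\x_{i_1},\ldots,\x_{i_m}) = Y_n(t)$ for $t \ge m/p$, and that $\Phi(M) = Y$ with the representation $Y(t) = \sup\{\Delta_i : U_i^{(m)} \le t\}$ given just before the statement.

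Next I would verify that $\Phi$ is continuous (in the vague topology on $\mathcal{M}(S)$, mapping to the $\mathcal{D}$-topology on $D(0,1]$) on a set of $M$-measure one. This is the standard argument used for extremal processes (cf.\ Resnick~\cite{resnick:1987}, p.~214, and \cite[Section 5.4.3]{embrechts:kluppelberg:mikosch:1997}): if $\nu_n \stackrel{v}{\to} \nu$ where $\nu$ has finitely many points in each set $S_1 \times [y,w)$, $y > v$, with all last-time-coordinates distinct and no point mass on the boundary planes $\{z_m = t\}$ for the relevant $t$, then the locations and heights of the points of $\nu_n$ converge to those of $\nu$, hence the running suprema converge in the Skorohod sense on each $[\delta,1]$; one then lets $\delta \downarrow 0$ using that $\mathcal{D}$ is defined through right-continuous extensions on $[0,1]$ and that $Y(t)$ is a.s.\ finite and bounded away from $w$ as $t\downarrow 0$ only in the Fréchet/Gumbel cases — in the Weibull case $Y(t)\to v=-\infty$ but the Skorohod metric on the right-continuous extension still behaves well since the jump times of $Y$ accumulate only at $0$. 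The $M$-a.s.\ validity of the hypotheses follows because $M$ is a PRM with a non-atomic mean measure $\eta$ (the $m$-dimensional Lebesgue part in particular makes all time coordinates a.s.\ distinct and keeps mass off any fixed hyperplane), and because $\mu(y) < \infty$ for $y > v$ forces local finiteness of the height-truncated point configuration.

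Having established these two facts, the conclusion $Y_n \cid Y$ in $(D(0,1],\mathcal{D})$ follows from the continuous mapping theorem (\cite[Theorem 2.7]{billingsley:1999}), together with the observation that the difference between $\Phi(M_n)$ and $Y_n$ — which is supported on $[m/p, \text{(first record after time }\delta)]$ — is $\mathcal{D}$-negligible as $\nto$. I expect the main obstacle to be the careful treatment of the behaviour near $t = 0$: one must argue that for every $\varepsilon, \delta > 0$ the probability that $Y_n$ and $Y$ differ substantially on $(0,\delta]$ is small, uniformly in $n$, which requires a separate tightness-type estimate near the origin (controlling $\max_{1\le i_1<\ldots<i_m\le \lfloor p\delta\rfloor} T_{\iv}$) rather than following formally from the continuous mapping theorem on $(0,1]$; this is exactly the point where one invokes that $D[0,1]$ is separable and that convergence of finite-dimensional distributions at continuity points plus tightness gives convergence in distribution (\cite[Theorem 13.1]{billingsley:1999}), with the finite-dimensional distributions of $Y$ identified as the extremal-process form \eqref{extremal} in the discussion preceding the proposition.
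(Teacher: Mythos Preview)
Your approach is essentially the paper's: define the functional $\Phi$ (the paper calls it $V_1$) from $\mathcal{M}(S)$ to $D(0,1]$, verify its a.s.-continuity at $M$ by the Resnick-style point-matching and Skorohod time-change argument, and invoke the continuous mapping theorem. The paper absorbs the near-$t=0$ issue directly into this continuity check (by truncating at a level $\beta$ below the right-continuous extension at $0$ and working with the finitely many points above $\beta$), so your fallback to a separate fidi-plus-tightness argument via \cite[Theorem~13.1]{billingsley:1999} is not needed.
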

Since $Y$ is a nondecreasing function, which is constant between isolated jumps, it has only countably many discontinuity points. Now let $(\tau_n)_n$ be the sequence of these discontinuity points of $Y$.  Notice that by \cite[Theorem 5.4.7]{embrechts:kluppelberg:mikosch:1997} the point process $\sum_{k=1}^\infty\varepsilon_{\tau_k}$ is a PRM with mean measure $\nu(a,b)=\log(b/a)$ for $0<a<b\leq 1$.
We are ready to state our result for the point process of record times.
\begin{theorem}\label{thm:rectime}
Under the conditions of Theorem \ref{thm:maxg} and if $H(\cdot)=\exp(-\mu(\cdot))$ is an extreme value distribution it holds that
\begin{align*}
    J_n:=\sum_{k=1}^p\varepsilon_{p^{-1}L(k)}\cid J:=\sum_{k=1}^\infty \varepsilon_{\tau_k},
\end{align*}
in $\mathcal{M}(0,1]$, the space of point measures on $(0,1]$. %, where $J$ is a Poisson random measure with mean measure $\nu(a,b)=\log(b/a)$ for $0<a<b\leq 1$.
\end{theorem}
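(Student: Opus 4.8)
The plan is to derive Theorem~\ref{thm:rectime} from the point process convergence $M_n\cid M$ of Theorem~\ref{thm:maxg} by truncating at a low level, rather than from the extremal-process convergence of Proposition~\ref{prop:extr} alone. First I would reduce to compact subintervals: since $(0,1]$ is exhausted by the intervals $[\delta,1]$ with $\delta\downarrow 0$, and since $J=\sum_k\varepsilon_{\tau_k}$ is a.s.\ finite on each $[\delta,1]$ and a.s.\ charges neither $\{\delta\}$ nor $\{1\}$ (its mean measure $\nu$ being non-atomic), it suffices to prove $J_n(\,\cdot\cap(\delta,1])\cid J(\,\cdot\cap(\delta,1])$ in $\mathcal{M}((\delta,1])$ for each fixed $\delta\in(0,1)$. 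For $n$ large enough that $m/p<\delta$, any record time $L(k)$ with $p^{-1}L(k)\in(\delta,1]$ satisfies $L(k)>m$, and for $j>m$ the index $j$ is a record time precisely when $\max_{i_m\le j}T_\iv>\max_{i_m\le j-1}T_\iv$, i.e.\ exactly when $Y_n$ has a jump at $j/p$. Hence $J_n(\,\cdot\cap(\delta,1])$ is the point process of jump times of $Y_n$ in $(\delta,1]$, and likewise $J(\,\cdot\cap(\delta,1])$ is the point process of jump times of $Y$ in $(\delta,1]$ by definition of the $\tau_k$.

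Next I would truncate and apply the continuous mapping theorem. Fix $x_0\in(v,w)$ and set $M_n^{(x_0)}:=M_n(\,\cdot\cap(S_1\times(x_0,w)))$. On the event $\{Y_n(\delta)>x_0\}$ the running maximum $Y_n$ stays above $x_0$ throughout $[\delta,1]$, so every jump of $Y_n$ in $(\delta,1]$ is produced by a point $(\iv/p,T_\iv)$ of $M_n$ with $T_\iv>x_0$; conversely a point $(\iv/p,T_\iv)$ of $M_n^{(x_0)}$ with $i_m/p>\delta$ produces a jump at time $i_m/p$ exactly when $T_\iv$ strictly exceeds $T_\jv$ for every $\jv\ne\iv$ with $j_m\le i_m$. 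Thus, on $\{Y_n(\delta)>x_0\}$, we have $J_n(\,\cdot\cap(\delta,1])=\Psi_\delta(M_n^{(x_0)})$, where $\Psi_\delta$ maps a finite point configuration on $S_1\times(x_0,w)$ to the point process on $(\delta,1]$ of the last coordinates $z_m$, with $z_m>\delta$, of those points $(z_1,\ldots,z_m,y)$ whose value $y$ strictly exceeds that of every other point with smaller or equal last coordinate. The map $\Psi_\delta$ is continuous at each configuration with pairwise distinct values, pairwise distinct last coordinates and no last coordinate equal to $\delta$; and $M^{(x_0)}:=M(\,\cdot\cap(S_1\times(x_0,w)))$ is a.s.\ a finite configuration with these properties because $\mu(x_0,w)<\infty$ and its mean measure is non-atomic. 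Since $M_n^{(x_0)}\cid M^{(x_0)}$ follows from Theorem~\ref{thm:maxg} by restriction to $S_1\times(x_0,w)$, the continuous mapping theorem gives $\Psi_\delta(M_n^{(x_0)})\cid\Psi_\delta(M^{(x_0)})$.

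It remains to remove the truncation by letting $x_0\downarrow v$. The processes $J_n(\,\cdot\cap(\delta,1])$ and $\Psi_\delta(M_n^{(x_0)})$ coincide on $\{Y_n(\delta)>x_0\}$, and by Corollary~\ref{cor:maxg} applied to the first $\lfloor p\delta\rfloor$ vectors, $\P(Y_n(\delta)\le x_0)$ converges, as $\nto$, to a limit strictly less than $1$ that decreases to $0$ as $x_0\downarrow v$ (because $\mu(x_0)\to\infty$). On the other hand $\Psi_\delta(M^{(x_0)})\uparrow J(\,\cdot\cap(\delta,1])$ a.s.\ as $x_0\downarrow v$, since $Y$ has only finitely many jumps in $(\delta,1]$ and each of their values exceeds the a.s.\ finite level $Y(\delta)>v$. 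The ``convergence together'' lemma (\cite[Theorem 3.2]{billingsley:1999}) then upgrades the previous step to $J_n(\,\cdot\cap(\delta,1])\cid J(\,\cdot\cap(\delta,1])$, and the reduction step concludes the proof.

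I expect the main obstacle to be the verification that $\Psi_\delta$ is genuinely continuous at a typical realisation of $M^{(x_0)}$, i.e.\ that the running-maximum-record structure of a finite configuration is stable under small perturbations of its (distinct) coordinates. This is precisely the ``spurious small jump'' phenomenon that also prevents a naive continuous mapping argument from Proposition~\ref{prop:extr}: Skorohod convergence of the monotone step functions $Y_n$ to $Y$ in $D(0,1]$ does not by itself exclude vanishingly small extra jumps, and truncating at the low level $x_0$, where only finitely many points of $M_n$ are involved, is what makes the record structure rigid. The remaining ingredients — justifying the restriction maps in the reduction step and the finite-configuration convergence $M_n^{(x_0)}\cid M^{(x_0)}$ — are routine.
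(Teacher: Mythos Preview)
Your proposal is correct and takes a genuinely different route from the paper. The paper argues via Proposition~\ref{prop:extr}: having established $Y_n\cid Y$ in $D(0,1]$, it introduces the jump-time map $V_2:D^\uparrow(0,1]\to\mathcal{M}(0,1]$, $x\mapsto\sum_i\varepsilon_{t_i}$ with $\{t_i\}$ the discontinuities of $x$, asserts that $V_2$ is continuous, and concludes $J_n=V_2(Y_n)\cid V_2(Y)=J$ by continuous mapping. You instead bypass $D(0,1]$ entirely: restrict to $(\delta,1]$, truncate $M_n$ below at a level $x_0$ so that only a finite configuration remains, apply continuous mapping to the record-extracting functional $\Psi_\delta$ on finite configurations (where the record structure is rigid because all values and last coordinates are a.s.\ distinct), and remove the truncation by letting $x_0\downarrow v$ via the convergence-together lemma. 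Your diagnosis of the ``spurious small jump'' problem is exactly right and in fact pinpoints a genuine gap in the paper's argument: the map $V_2$ is \emph{not} continuous on $D^\uparrow(0,1]$, nor even a.s.\ at $Y$, since one may add a single jump of height $1/n$ on any flat piece of $Y$ to obtain $x_n\to Y$ uniformly while $V_2(x_n)$ carries an extra atom that $V_2(Y)$ does not. The paper's sentence ``the jump times on $[a,b]$ of $x_n$ are close to those of $x$'' therefore does not follow from Skorohod convergence alone. What your approach buys is a rigorous replacement for this step; what the paper's route would buy, were the gap filled (for instance by showing directly that the composite $V_2\circ V_1:\mathcal{M}(S)\to\mathcal{M}(0,1]$ is a.s.\ continuous at $M$, which is essentially your $\Psi_\delta$ argument in disguise), is a slightly shorter write-up that reuses Proposition~\ref{prop:extr}.
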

Based on Theorem \ref{thm:rectime} we can make statements about the time points of the last and second last record at or before $p$.
\begin{corollary}\label{rem:recdis}
 Assume the conditions of Theorem~\ref{thm:rectime} and let $\zeta(p)$ be the number of records among the random variables $$\max_{1\leq i_1,\ldots, i_m\leq m}T_{\iv},\ldots, \max_{1\leq i_1,\ldots, i_m\leq p}T_{\iv}\,.$$ 
 Then the following statements hold for $x,y\in (0,1]$ as $n\to \infty$.
\begin{enumerate}
    \item[(1)] $\P(p^{-1}L(\zeta(p))\leq x)=\P(J_n(x,1]=0)\to \P(J(x,1]=0)=x$.\\ 
    
    \item[(2)] $\P(p^{-1}L(\zeta(p))\leq x, p^{-1}L(\zeta(p)-1)\leq y)\to y+y\log(x/y)$ for $x>y$.\\
    
    \item[(3)] $\P(p^{-1}(L(\zeta(p))-L(\zeta(p)-1))\leq x)\to x(1-\log(x))$.
\end{enumerate}

\end{corollary}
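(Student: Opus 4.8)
The plan is to obtain all three assertions from the point process convergence $J_n\cid J$ of Theorem~\ref{thm:rectime}, where $J=\sum_{k\ge 1}\varepsilon_{\tau_k}$ is the PRM on $(0,1]$ with mean measure $\nu(a,b)=\log(b/a)$. First I would record the elementary identities translating events about the last records into interval counts of $J_n$. Since $L(1)<L(2)<\cdots<L(\zeta(p))\le p<L(\zeta(p)+1)$ are exactly the record times at or before $p$, as a point measure on $(0,1]$ we have $J_n=\sum_{k=1}^{\zeta(p)}\varepsilon_{p^{-1}L(k)}$, so in particular $\zeta(p)=J_n(0,1]$, and for every $p$
\begin{align*}
\{p^{-1}L(\zeta(p))\le x\}=\{J_n(x,1]=0\},
\end{align*}
because $L(\zeta(p))\le px$ if and only if no record time lies in $(px,p]$. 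Likewise, on the event $\{\zeta(p)\ge 2\}$ one checks directly that for $y<x$
\begin{align*}
\{p^{-1}L(\zeta(p))\le x,\ p^{-1}L(\zeta(p)-1)\le y\}=\{J_n(x,1]=0,\ J_n(y,x]\le 1\},
\end{align*}
since the second record from the end is $\le py$ precisely when at most one record time falls in $(py,px]$.

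Next, for (1) and (2) I would pass to the limit. The intervals $(x,1]$ and $(y,x]$ are relatively compact in $(0,1]$ and have $\nu$-null boundaries, hence $J(\partial(\cdot))=0$ almost surely; so $J_n\cid J$ gives the joint convergence $(J_n(y,x],J_n(x,1])\cid(J(y,x],J(x,1])$. As $J$ is a PRM, the limiting counts are independent Poisson variables with means $\nu(y,x]=\log(x/y)$ and $\nu(x,1]=-\log x$. Consequently (1) is $\P(J(x,1]=0)=\mathrm{e}^{\log x}=x$, and (2) is
\begin{align*}
\P(J(x,1]=0)\,\P(J(y,x]\le 1)=x\cdot\tfrac{y}{x}\bigl(1+\log(x/y)\bigr)=y+y\log(x/y).
\end{align*}
The edge event is negligible: $\P(\zeta(p)\le 1)=\P(J_n(0,1]\le 1)\le\P(J_n(\delta,1]\le 1)\to\P(J(\delta,1]\le 1)=\delta(1-\log\delta)$ for every $\delta>0$, so $\P(\zeta(p)\le 1)\to 0$.

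For (3) I would first upgrade to the joint law of the scaled times of the last two records. With $U_n=p^{-1}L(\zeta(p)-1)$ and $V_n=p^{-1}L(\zeta(p))$, the displays above give $\P(V_n\le t,\,U_n\le s)=\P(J_n(t,1]=0,\,J_n(s,t]\le 1)$ for $s<t$ and $=\P(V_n\le t)$ for $s\ge t$; passing to the limit as before, the bivariate distribution function converges at every continuity point to the distribution of the pair $(U,V)$ formed by the second-largest and the largest atom of $J$, whose distribution function on $\{0<s<t\le 1\}$ is $s\bigl(1+\log(t/s)\bigr)$ by the computation in (2). This limit is absolutely continuous with density $f_{U,V}(s,t)=1/t$ on $\{0<s<t\le 1\}$, so $(U_n,V_n)\cid(U,V)$ and, applying the continuous mapping theorem to $(s,t)\mapsto t-s$ and using continuity of the limiting distribution function, for $x\in(0,1]$
\begin{align*}
\P\bigl(p^{-1}(L(\zeta(p))-L(\zeta(p)-1))\le x\bigr)\to\P(V-U\le x)=\iint_{\{0<s<t\le 1,\ t-s\le x\}}\frac{1}{t}\,\mathrm{d}s\,\mathrm{d}t.
\end{align*}
Splitting the region of integration at $t=x$ yields $\int_0^x 1\,\mathrm{d}t+\int_x^1\frac{x}{t}\,\mathrm{d}t=x-x\log x=x(1-\log x)$, which is (3).

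The combinatorial identities and the final integral are routine. The only point that needs a little care is the joint convergence of $(U_n,V_n)$: one must verify that the bivariate distribution functions converge at all continuity points of the absolutely continuous limit (these are exactly the pairs with $s\ne t$, $s,t\in(0,1)$) and that the event $\{\zeta(p)\le 1\}$, on which $L(\zeta(p)-1)$ is undefined, is asymptotically negligible—both handled above. I do not expect any substantive obstacle, since Theorem~\ref{thm:rectime} already supplies the full point process convergence and the rest is the standard extraction of order-statistic behaviour from a PRM limit, as in the proof of Corollary~\ref{cor:ssdgsd}.
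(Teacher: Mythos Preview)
Your proof is correct and follows essentially the same route as the paper: translate the record-time events into interval counts of $J_n$, pass to the limit using $J_n\cid J$, compute the Poisson probabilities for (1) and (2), and for (3) derive the joint density $1/t$ on $\{0<s<t\le 1\}$ of the two largest atoms of $J$ and integrate. Your version is slightly more explicit than the paper's in two places---you handle the edge case $\zeta(p)\le 1$ and you justify the joint convergence of $(U_n,V_n)$ before applying the continuous mapping theorem, whereas the paper simply asserts $\P(p^{-1}(L(\zeta(p))-L(\zeta(p)-1))\le x)\to\P(\tau^{(1)}-\tau^{(2)}\le x)$---and your final integral is computed by splitting at $t=x$ rather than substituting $w=t-s$, but these are cosmetic differences.
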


\begin{proof}
Let $0<y<x\le 1$. Part (1) is a direct consequence of the definitions of $\zeta$ and $L$. Part (2) follows by
\begin{align*}
    \P(p^{-1}L(\zeta(p))\leq x, p^{-1}L(\zeta(p)-1)\leq y)&=\P(J_n(x,1]=0, J_n(y,1]\leq 1)\\&\to \P(J(x,1]=0, J(y,1]\leq 1)
\end{align*}
as $n\to\infty$ and
\begin{align*}
    \P(J(x,1]=0, J(y,1]\leq 1)=\P(J(x,1]=0)\P(J(y,x]\leq 1)=y+y\log(x/y).
\end{align*}
To prove part (3) we assume that $\tau^{(1)}$ and $\tau^{(2)}$ are the first and the second upper order statistics of $(\tau_n)_n$. These upper order statistics exist since for every $a>0$ there are only finitely many $\tau_n\in [a,1]$. Then, we know by part (2) that
\begin{align}\label{distrfun}
    \P(\tau^{(1)}\leq x, \tau^{(2)}\leq y)=\P(J(x,1]=0, J(y,1]\leq 1)=\begin{cases} y+y\log(x/y),\quad &x>y\\
    x, &\text{otherwise}.
    \end{cases}
\end{align}
Since
\begin{align*}
   \lim_{\nto} \P(p^{-1}(L(\zeta(p))-L(\zeta(p)-1))\leq x)=  \P(\tau^{(1)}-\tau^{(2)}\leq x)
\end{align*}
we need to calculate $\P(\tau^{(1)}-\tau^{(2)}\leq x)$.
The joint density of $\tau^{(1)}$ and $\tau^{(2)}$ can be deduced from \eqref{distrfun}, it is
\begin{align*}
    f_{\tau^{(1)}\tau^{(2)}}(u,v)=\begin{cases} 1/u\quad &u>v\\
    0, &\text{otherwise}.
    \end{cases}
\end{align*}
Hence, we get the following distribution function of $\tau^{(1)}-\tau^{(2)}$
\begin{align*}
    \P(\tau^{(1)}-\tau^{(2)}\leq x)&=\int_0^x\int_0^{1-w} f_{\tau^{(1)}\tau^{(2)}}(w+v,v)dv\, dw\\
    &=\int_0^x\int_0^{1-w} 1/(w+v) dv \,dw = \int _0^x \log(1/w) dw = x(1-\log(x)),
\end{align*}
which completes the proof.
\end{proof}

\section{Applications}\setcounter{equation}{0}\label{sec2}

\subsection{Relative ranks}

In recent years, maximum-type tests based on the convergence in distribution of the maximum of rank statistics of a data set have gained significant interest for statistical testing \cite{han:chen:liu:2017}. Let $\y_1,\ldots,\y_n$ be $p$-dimensional iid random vectors with $\y_t=(X_{1t},\ldots,X_{pt})$ following a continuous distribution to avoid ties. %and for $1\leq i\leq n$ the random variables $X_{i1},\ldots, X_{in}$ are independent.
 We write $Q_{it}$ for the rank of $X_{it}$ among $X_{i1},\ldots, X_{in}$. Additionally, let $R_{ij}^{(t)}$ be the relative rank of the $j$-th entry compared to the $i$-th entry; that is  $R_{ij}^{(t)} = Q_{ j t'}$ with $t'$ such that $Q_{i t'}=t$ for $t=1,\ldots,n$. 

A simpler explanation of $R_{ij}^{(t)}$ is that we look at the $j$-th and $i$-th rows of $(Q_{it})$ and find the location of $t$ in the $i$-th row. Then we choose the value in the $j$-th row at this location.

Many important statistics are based on (relative) ranks; we consider two classes of such statistics in this section.  First, we introduce the so--called simple linear rank statistics, which are of the form
\begin{equation}\label{eq:simplelinearrank}
V_{ij}=\sum_{t=1}^n c_{nt} \, g(R_{ij}^{(t)}/(n+1))\,, \qquad 1\le i< j\le p\,,
\end{equation}
where $g$ is a Lipschitz function (also called score function), and $(c_{nt})$ with $c_{nt}=n^{-1} f(t/(n+1))$ for a Lipschitz function $f$ and $\sum_{t=1}^n c_{nt}^2 >0$ are called the regression constants.
An example of such a simple linear rank statistic is Spearman's $\rho$, which will be discussed in detail in Section~\ref{sec:spearman}. For $1\leq i<j\leq p$ the relative ranks $(R_{ij}^{(t)})_{t=1}^n$ depend on the vectors $\x_i$ and $\x_j$, where $\x_k=(X_{k1},\ldots, X_{kn})$ for $1\leq k\leq p$. We assume that the  vectors $\x_1,\ldots \x_p$ are independent. It is worth mentioning that the ranks $(Q_{it})$ remain the same if we transform the marginal distributions to the (say) standard uniform distribution. Thus, the joint distribution of $(R_{ij}^{(t)})_{t=1}^n$, and thereby the distribution of $V_{ij}$, does not depend on the distribution of $\x_i$ or $\x_j$. Therefore, we may assume without loss of generality that the random vectors $\x_1,\ldots, \x_p$ are identically distributed. We can write $V_{ij}=g_{n,V}(\x_i,\x_j)$ for a measurable function $g_{n,V}:\R^{2n}\to\R$.

Next, we consider rank-type $U$-statistics of order $m<n$ of the form
\begin{equation}\label{eq:ustatistic}
U_{ij}= \frac{1}{n(n-1)\cdots (n-m+1)}\sum_{1\le t_1\neq  \cdots  \neq t_m\le n} h( (X_{i t_1},X_{j t_1}), \ldots,  (X_{i t_m},X_{j t_m}))\,,
\end{equation}
where the symmetric kernel $h$ is such that $U_{ij}$ depends only on $(R_{ij}^{(t)})_{t=1}^n$. An important example of a rank-type $U$- statistic is Kendall's $\tau$, which will be studied in Section~\ref{sec:kendall}. For more examples we refer to \cite{han:chen:liu:2017} and references therein.
As for simple linear rank statistics, we are able to write $U_{ij}=g_{n,U}(\x_i,\x_j)$, where $g_{n,U}:\R^{2n}\to\R$ is a measurable function and $\x_1,\ldots\x_p$ are iid random vectors. 
\smallskip

An interesting property of rank-based statistics is the following pairwise independence. We also note that they are generally not
mutually independent.
\begin{lemma}[Lemma C4 in \cite{han:chen:liu:2017}] 
For $1\le i<j\le p$, let $\Psi_{ij}$ be a function of the relative ranks $\{R_{ij}^{(t)}, t=1,\ldots,n\}$. Assume $\x_1,\ldots,\x_p$ are independent. Then for any $(i,j) \neq (k,l)$, $i< j, k< l$, the random variables $\Psi_{ij}$ and $\Psi_{kl}$ are independent.
\end{lemma}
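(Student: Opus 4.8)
The plan is to reduce everything to statements about random permutations. For each row index $i$ let $\sigma_i \in S_n$ be the permutation that orders the $i$-th row, i.e.\ $Q_{it}=\sigma_i(t)$. The standing assumptions of this section imply that the entries within each row are iid and continuous (equivalently, transform each row's marginal by its own distribution function, which changes no rank), so each $\sigma_i$ is uniformly distributed on $S_n$; since $\x_1,\dots,\x_p$ are independent, the $\sigma_i$ are independent, hence iid uniform on $S_n$. From the definition $R_{ij}^{(t)}=Q_{jt'}$ with $Q_{it'}=t$, i.e.\ $t'=\sigma_i^{-1}(t)$, one reads off that the relative-rank vector $(R_{ij}^{(t)})_{t=1}^n$ is exactly the permutation $\pi_{ij}:=\sigma_j\circ\sigma_i^{-1}$. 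Consequently $\Psi_{ij}$ is a measurable function of $\pi_{ij}$, and since $\pi_{ij}^{-1}=\pi_{ji}$ we may freely swap the roles of $i$ and $j$.

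First I would dispose of the disjoint case. If $\{i,j\}\cap\{k,l\}=\emptyset$, then $\pi_{ij}$ is a function of $(\sigma_i,\sigma_j)$ and $\pi_{kl}$ a function of $(\sigma_k,\sigma_l)$; these four permutations are independent, so $\Psi_{ij}$ and $\Psi_{kl}$ are independent.

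The substantive case is when $\{i,j\}$ and $\{k,l\}$ share exactly one index — they cannot share two, since then $(i,j)=(k,l)$ as both pairs are increasing. Call the common index $a$, let $b$ be the remaining element of $\{i,j\}$ and $c$ that of $\{k,l\}$, so $a,b,c$ are distinct. I would condition on $\sigma_a$ (equivalently, on $\x_a$). Given $\sigma_a$, the permutations $\sigma_b$ and $\sigma_c$ are still independent of each other and of $\sigma_a$, so $\Psi_{ij}$ (a function of $\sigma_a,\sigma_b$) and $\Psi_{kl}$ (a function of $\sigma_a,\sigma_c$) are conditionally independent given $\sigma_a$. It then remains to check that the conditional law of $\pi_{ij}$ given $\sigma_a$ does not depend on $\sigma_a$: up to replacing $\pi_{ij}$ by its inverse it equals $\sigma_a\circ\sigma_b^{-1}$, and multiplying the uniform, $\sigma_a$-independent permutation $\sigma_b^{-1}$ on the left by the fixed permutation $\sigma_a$ leaves it uniform on $S_n$; hence $\pi_{ij}$, and therefore $\Psi_{ij}$, is independent of $\sigma_a$ (the same argument applies to $\Psi_{kl}$, although one side suffices). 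Combining, for measurable $A,B$,
\[
\P(\Psi_{ij}\in A, \Psi_{kl}\in B)=\E\big[\P(\Psi_{ij}\in A\mid\sigma_a)\,\P(\Psi_{kl}\in B\mid\sigma_a)\big]=\P(\Psi_{ij}\in A)\,\P(\Psi_{kl}\in B),
\]
which is the assertion.

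The main obstacle is precisely this shared-index case: one must see that $\Psi_{ij}$ is independent of $\x_a$ even though it is constructed from $\x_a$. This is where the translation invariance of the uniform law on $S_n$ — and hence the within-row exchangeability that makes each $\sigma_i$ uniform — enters in an essential way; a small two-point example shows the statement fails once the $\sigma_i$ are allowed to be non-uniform. Everything else is bookkeeping about which rows each $\Psi$ depends on.
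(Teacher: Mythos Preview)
Your argument is correct. The paper does not give its own proof of this lemma; it simply quotes it as Lemma~C4 of \cite{han:chen:liu:2017}, so there is nothing to compare against directly. Your reduction to iid uniform permutations $\sigma_i$ via the standing assumptions (iid columns, independent rows, continuous marginals), the identification $(R_{ij}^{(t)})_t=\sigma_j\circ\sigma_i^{-1}$, and the conditioning argument in the overlapping case are all sound; the step using translation invariance of the uniform law on $S_n$ is exactly the point of the lemma and you isolate it cleanly. Your remark that only one of the two conditional laws needs to be free of $\sigma_a$ is also right, and the $n=2$ counterexample correctly shows that mere independence of the rows (without within-row exchangeability forcing each $\sigma_i$ to be uniform) is not enough.
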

As an immediate consequence we obtain pairwise independence of $(U_{ij})$ and $(V_{ij})$, respectively.

\begin{lemma}\label{lem:UVind} 
For any $(i,j) \neq (k,l)$, $i< j, k< l$, the random variables $V_{ij}$ and $V_{kl}$ are independent and identically distributed. Moreover, $U_{ij}$ and $U_{kl}$ are independent  and identically distributed.
\end{lemma}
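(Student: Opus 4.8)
The plan is to read off Lemma~\ref{lem:UVind} as an immediate corollary of the preceding lemma (Lemma~C4 of \cite{han:chen:liu:2017}) combined with the distributional invariance of rank-based statistics. First I would observe that, by their very definitions in \eqref{eq:simplelinearrank} and \eqref{eq:ustatistic}, both the simple linear rank statistic $V_{ij}$ and the rank-type $U$-statistic $U_{ij}$ are measurable functions of the relative ranks $\{R_{ij}^{(t)}:t=1,\ldots,n\}$ alone; for $U_{ij}$ this is precisely the standing assumption imposed on the kernel $h$. Hence one may apply the preceding lemma with $\Psi_{ij}=V_{ij}$, respectively $\Psi_{ij}=U_{ij}$, using that $\x_1,\ldots,\x_p$ are independent, to conclude that $V_{ij}$ and $V_{kl}$ are independent, and that $U_{ij}$ and $U_{kl}$ are independent, whenever $(i,j)\neq(k,l)$ with $i<j$ and $k<l$.

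For the ``identically distributed'' part I would argue via the representations $V_{ij}=g_{n,V}(\x_i,\x_j)$ and $U_{ij}=g_{n,U}(\x_i,\x_j)$ introduced in the text. As noted in the discussion preceding the lemma, the rank array $(Q_{it})$, and therefore every relative rank $R_{ij}^{(t)}$, is invariant under strictly increasing transformations applied separately to each marginal; consequently the joint law of $(R_{ij}^{(t)})_{t=1}^n$ does not depend on the common marginal distribution, and we may assume without loss of generality that $\x_1,\ldots,\x_p$ are iid. Since the $\x$'s are iid, the pair $(\x_i,\x_j)$ has the same distribution as $(\x_1,\x_2)$ for every $i<j$, and applying the measurable maps $g_{n,V}$ and $g_{n,U}$ to both sides yields $V_{ij}\eqd V_{12}$ and $U_{ij}\eqd U_{12}$, which is the claim.

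There is essentially no deep obstacle here, since the statement is a corollary; the only points requiring care are (i) verifying that $V_{ij}$ and $U_{ij}$ genuinely depend on the data only through the relative ranks $(R_{ij}^{(t)})_t$, so that Lemma~C4 applies verbatim, and (ii) making the ``without loss of generality identically distributed'' reduction explicit, since a priori the $\x_i$ are only assumed independent. Once these two observations are recorded, both the independence and the equidistribution follow at once; the accompanying remark that $(U_{ij})$ and $(V_{ij})$ are in general \emph{not} mutually independent requires no proof, being merely a cautionary comment.
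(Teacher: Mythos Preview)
Your proposal is correct and follows exactly the approach the paper intends: the paper simply records Lemma~\ref{lem:UVind} as ``an immediate consequence'' of the preceding Lemma~C4 from \cite{han:chen:liu:2017}, together with the representation $V_{ij}=g_{n,V}(\x_i,\x_j)$, $U_{ij}=g_{n,U}(\x_i,\x_j)$ for iid $\x_i$ discussed just before. Your write-up merely makes explicit the two observations (rank-measurability for independence, the iid reduction for equidistribution) that the paper leaves to the reader.
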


We now want to standardize $U_{ij}$ and $V_{ij}$. By independence of $(X_{it})$, we have
\begin{equation*}
\E[V_{ij}] = \ov g_n \sum_{t=1}^n c_{nt}\,, \quad 
\var(V_{ij})= \frac{1}{n-1} \sum_{t=1}^n ( g(t/(n+1))- \ov g_n)^2 \sum_{s=1}^n (c_{ns}- \ov c_n)^2\,,
\end{equation*}
where $\ov g_n = n^{-1} \sumt g(t/(n+1))$ is the sample mean of $g(Q_{11}/(n+1)),\ldots, g(Q_{1n}/(n+1))$ and $\ov c_n= \sumt c_{nt}$. Expectation and variance of $U_{ij}$ can also be calculated analytically.
We set
\begin{equation*}
\mu_V= \E[V_{12}]\,, \sigma_V^2= \var(V_{12}) \quad \text{ and } \quad \mu_U= \E[U_{12}]\,, \sigma_U^2=\var(U_{12})\,,
\end{equation*}
and define the standardized versions of $U_{ij}$ and $V_{ij}$ by
\begin{equation}\label{eq:UVtilde}
\wt V_{ij} = (V_{ij}-\mu_V)/\sigma_V \quad \text{ and } \quad \wt U_{ij}=(U_{ij}-\mu_U)/\sigma_U\,,\quad 1\le i<j\le p.
\end{equation}
It is well--known that $\wt V_{ij}$ and $\wt U_{ij}$ are asymptotically standard normal and the following lemma provides a complementary large deviation result.
%[Lemma C5 in \cite{han:chen:liu:2017}]
\begin{lemma} \cite[p.404-405]{kallenberg:1982} \label{lem:tailUV} 
Suppose that the kernel function $h$ is bounded and non-degenerate. Then we have for $x=o(n^{1/6})$ that
\begin{equation*}
\P(\wt U_{12} >x) = \ov \Phi (x) (1+o(1)), \qquad \nto \,.
%\Big( 1+O\Big( \frac{1+x^3}{\sqrt{n}}  \Big)\Big)\,.
\end{equation*}
Assume that the score function $g$ is differentiable with bounded Lipschitz constant and that the constants $(c_{nt})_t$ satisfy 
\begin{equation}\label{eq:condcnt}
\max_{1\le t\le n} |c_{nt}- \ov c_n|^2 \le \frac{C^2}{n^{ 2/3}} \sumt (c_{nt}-\ov c_n)^2 \,, \quad \Big| \sumt (c_{nt}-\ov c_n)^3 \Big|^2 \le \frac{C^2}{n}
\Big| \sumt (c_{nt}-\ov c_n)^2 \Big|^3\,,
\end{equation}
where $C$ is some constant. Then it holds for $x=o(n^{1/6})$
\begin{equation*}
\P(\wt V_{12} >x) = \ov \Phi (x) (1+o(1)),\qquad \nto \,.
\end{equation*}
\end{lemma}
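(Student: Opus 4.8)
\emph{Sketch of a proof strategy.} Both assertions are Cram\'er-type moderate deviation results: they say that the tail of the standardized statistic agrees with the normal tail $\ov\Phi(x)$ up to a factor $1+o(1)$ throughout the range $x=o(n^{1/6})$, which is exactly the classical Cram\'er range in which the leading (third-cumulant) term of the Cram\'er series, of order $x^3 n^{-1/2}$, is still negligible. The plan is, in each case, to represent the statistic as a normalized sum of independent summands plus a remainder that is negligible \emph{on the exponential scale}, and then to invoke the Cram\'er--Petrov moderate deviation expansion for sums of independent random variables possessing a finite exponential moment.

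\textbf{The $U$-statistic $\wt U_{12}$.} Since $\x_1,\ldots,\x_p$ are independent, the pairs $Z_t=(X_{1t},X_{2t})$, $t=1,\ldots,n$, are iid, so $U_{12}$ is a genuine $U$-statistic of order $m$ in $Z_1,\ldots,Z_n$ with bounded symmetric kernel. Its Hoeffding decomposition reads
\[
U_{12}-\mu_U=\frac{m}{n}\sum_{t=1}^n h_1(Z_t)+R_n,\qquad h_1(z)=\E[h(z,Z_2,\ldots,Z_m)]-\mu_U,
\]
with $R_n$ collecting the completely degenerate parts of orders $2,\ldots,m$. Non-degeneracy of $h$ gives $\sigma_1^2:=\var(h_1(Z_1))>0$, hence $\sigma_U^2=m^2\sigma_1^2 n^{-1}(1+o(1))$, and once standardized the linear term is a normalized sum of bounded iid variables, for which the Cram\'er--Petrov expansion yields $\P(\cdot>x)=\ov\Phi(x)(1+o(1))$ uniformly for $x=o(n^{1/6})$. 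It then remains to absorb $R_n$, which has $L^2$-norm $O(n^{-1})$, i.e.\ $O(n^{-1/2})$ after standardization; here a moment bound is not enough, because $\ov\Phi(x)$ may be as small as $\exp(-o(n^{1/3}))$ in this range, so one uses the exponential inequalities for bounded completely degenerate $U$-statistics (Arcones--Gin\'e, Gin\'e--Lata\l a--Zinn), which bound $\P(|R_n|>s)$ by $C\exp(-c\,ns)$ in the relevant regime --- a quantity that is $o(\ov\Phi(x))$ uniformly for $x=o(n^{1/6})$. A routine slicing $\{\wt U_{12}>x\}\subset\{\text{linear term}>x-\eta_n\}\cup\{|R_n|>\eta_n\sigma_U\}$, with $\eta_n\to0$ more slowly than $1/x$, completes this case.

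\textbf{The rank statistic $\wt V_{12}$.} Because $\x_1,\x_2$ are iid and continuously distributed, the relative-rank vector $(R_{12}^{(1)},\ldots,R_{12}^{(n)})$ is a uniformly random permutation $\pi$ of $\{1,\ldots,n\}$, so $V_{12}\eqd\sum_{t=1}^n c_{nt}\,g(\pi(t)/(n+1))$ --- a Hoeffding combinatorial sum $\sum_t a_t\,b_{\pi(t)}$; centering the $c_{nt}$ and the values $g(s/(n+1))$ about their means (which does not change $V_{12}-\mu_V$) recovers the stated formula for $\sigma_V^2$. For combinatorial sums of this form a Cram\'er-type moderate deviation expansion holds under Lyapunov-type control of the summands, and the two conditions in \eqref{eq:condcnt} supply exactly this: the first controls the maximal centered $|c_{nt}|$ relative to $\sigma_V$ at the rate $n^{-1/3}$ needed to couple the permutation statistic to a sum of \emph{independent} bounded variables with exponentially small error, while the second bounds the third absolute moment of the $c_{nt}$, thereby controlling the leading Cram\'er-series term so that it remains $o(1)$ for $x=o(n^{1/6})$; the differentiability of $g$ with bounded Lipschitz constant plays the analogous regularizing role for the scores $b_s=g(s/(n+1))$, legitimizing the Chernoff--Savage/H\'ajek approximation of the rank statistic by a genuine score-generating-function sum. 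One concludes via a combinatorial moderate deviation theorem --- e.g.\ in the spirit of van Zwet and Does, or through the Stein's-method framework of Chen--Fang--Shao for Cram\'er-type moderate deviations under local dependence --- to get $\P(\wt V_{12}>x)=\ov\Phi(x)(1+o(1))$.

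\textbf{Main obstacle.} The delicate part is the rank-statistic case: establishing the Cram\'er-type expansion for $\sum_t a_t b_{\pi(t)}$ \emph{uniformly} over the full range $x=o(n^{1/6})$ and verifying that \eqref{eq:condcnt} together with the smoothness of $g$ are precisely the right hypotheses. The permutation introduces weak dependence, so one must either import a combinatorial moderate deviation theorem whose assumptions match ours, or carry out the coupling to independent summands with explicit exponential-scale error control; this coupling, together with the bookkeeping of the Cram\'er series under the scaling conditions, is where the real work lies. For $\wt U_{12}$ the analogous difficulty is milder and localizes entirely in the exponential (rather than merely polynomial) negligibility of the degenerate Hoeffding remainder.
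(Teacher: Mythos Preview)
The paper does not prove this lemma at all: it is stated with an explicit citation to Kallenberg (1982), pp.~404--405, and is used as a black box in the proof of Theorem~\ref{thm:U}. There is therefore no ``paper's own proof'' to compare against; your sketch is already far more detailed than anything the authors provide.

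That said, your outline is in the right spirit and broadly matches the actual content of the cited reference. For $\wt V_{12}$ in particular, Kallenberg's 1982 paper establishes Cram\'er-type moderate deviations for simple linear rank statistics precisely under conditions of the form \eqref{eq:condcnt}, so your identification of $V_{12}$ with a Hoeffding combinatorial sum $\sum_t c_{nt}\, g(\pi(t)/(n+1))$ for a uniform random permutation $\pi$, together with the role of \eqref{eq:condcnt} as Lyapunov-type third-moment control, is exactly right. For $\wt U_{12}$, the Hoeffding decomposition plus exponential tail bounds for the degenerate remainder is the standard route; one minor caution is that you should verify that the iid structure of the pairs $Z_t=(X_{1t},X_{2t})$ really holds here (it does, since $\x_1,\x_2$ are independent with iid components), and that the relevant $\sigma_1^2>0$ is guaranteed by the stated non-degeneracy of $h$. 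The step you flag as the main obstacle --- the uniform Cram\'er expansion for the permutation statistic --- is precisely the substance of the cited Kallenberg paper, so in practice one would simply quote it rather than redo the coupling argument.
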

For a discussion of \eqref{eq:condcnt}, see \cite[p.405]{kallenberg:1982}. 
To proceed we need to find a suitable scaling and centering sequences  for $\wt V_{ij}$ and $\wt U_{ij}$, respectively, such that the conditions of Theorem~\ref{thm:maxg} are fulfilled.
For an iid standard normal sequence $(X_i)$ it is known that
\beao
\lim_{p \to \infty} \P\Big(\wt d_p \big(\max_{i=1,\ldots,p} X_i-\wt d_p\big)\le x\Big)=\exp(-\ex^{-x})=\Lambda(x)\,,
\qquad x\in\R\,, 
\eeao
where $\wt d_p=\sqrt{2\log p} - \tfrac{\log\log p+\log 4\pi}{2(2\log p)^{1/2}}$; see Embrechts et al. \cite[Example~3.3.29] {embrechts:kluppelberg:mikosch:1997}.
Since we are dealing with $p(p-1)/2$ random variables $(V_{ij})$ and $(U_{ij})$, respectively, which are asymptotically standard normal,  $ d_p =\wt d_{p(p-1)/2}$ seems like a reasonable choice for scaling and centering sequences.

Our main result for rank-statistics is the following.
\begin{theorem}\label{thm:U}
(a) Suppose that the kernel function $h$ is bounded and non-degenerate.
 If $p = \exp(o(n^{1/3}))$, the following \pp\ \con\ holds
\begin{equation}\label{eq:NU}
N_n^U := \sum_{1\le i<j\le p} \vep_{ d_p( \wt U_{ij}- d_p)} \cid N :=\sum_{i=1}^{\infty} \vep_{-\log \Gamma_i}\,,\qquad \nto\,,
\end{equation}
where $\Gamma_i= E_1+\cdots+E_i$, $i\ge 1$, and $(E_i)$ are iid standard exponential, i.e., $N$ is a Poisson random measure with mean measure $\mu(x,\infty)=\e^{-x}$,
$x\in\R$. \\
(b) Assume that the score function $g$ is differentiable with bounded Lipschitz constant and that the constants $(c_{nt})_t$ satisfy \eqref{eq:condcnt}.
Then if $p=\exp(o(n^{1/3}))$, it holds that
\begin{equation}\label{eq:NV}
N_n^V := \sum_{1\le i<j\le p} \vep_{ d_p( \wt V_{ij}- d_p)} \cid N\,,\qquad \nto\,.
\end{equation}
\end{theorem}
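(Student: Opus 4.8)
The plan is to apply Theorem~\ref{thm:maxg} with $m=2$, $v=-\infty$, $w=\infty$, $\mu(x)=\e^{-x}$ (so $\mu(x,\infty)=\e^{-x}$), and with $g_n$ taken to be $g_n(\x_i,\x_j)=d_p(\wt U_{ij}-d_p)$ in part~(a), respectively $g_n(\x_i,\x_j)=d_p(\wt V_{ij}-d_p)$ in part~(b); these $g_n$ are measurable and symmetric because $\wt U_{ij}=g_{n,U}(\x_i,\x_j)$ and $\wt V_{ij}=g_{n,V}(\x_i,\x_j)$ for symmetric measurable kernels and an affine rescaling preserves symmetry. Granting the hypotheses of Theorem~\ref{thm:maxg}, we get $N_n^U=M_n(S_1\times\cdot)\cid N$, where $N$ is the Poisson random measure with mean measure $\mu$, which in the Gumbel case is $N=\sum_{i\ge1}\vep_{-\log\Gamma_i}$. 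So everything reduces to verifying (A1) and (A2), and since $m=2$ the index $l$ in (A2) runs only over $l=1$, i.e.\ (A2) becomes $\P\big(g_n(\x_1,\x_2)>x,\ g_n(\x_2,\x_3)>x\big)=o(p^{-3})$.

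For (A1) I would invoke Lemma~\ref{lem:tailUV}. Set $\tilde p=p(p-1)/2=\binom{p}{2}$, so that $d_p=\wt d_{\tilde p}$ is exactly the centering--scaling sequence for which the classical Gaussian maximum limit $\tilde p\,\ov\Phi\big(d_p+x/d_p\big)\to\e^{-x}$ holds for every $x\in\R$ (see \cite[Example~3.3.29]{embrechts:kluppelberg:mikosch:1997}). Rewriting $\{g_n(\x_1,\x_2)>x\}=\{\wt U_{12}>d_p+x/d_p\}$, and observing that $p=\exp(o(n^{1/3}))$ yields $\log\tilde p\sim 2\log p=o(n^{1/3})$ and hence $d_p\sim\sqrt{2\log\tilde p}=o(n^{1/6})$, we see that the threshold $d_p+x/d_p$ is $o(n^{1/6})$, so Lemma~\ref{lem:tailUV} applies and gives $\P(\wt U_{12}>d_p+x/d_p)=\ov\Phi(d_p+x/d_p)(1+o(1))$. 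Multiplying by $\binom{p}{2}=\tilde p$ and using the Gaussian limit yields $\binom{p}{2}\P(g_n(\x_1,\x_2)>x)\to\e^{-x}=\mu(x)$, which is (A1).

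For (A2) the point is pairwise independence. Since $(1,2)\neq(2,3)$, Lemma~\ref{lem:UVind} shows that $\wt U_{12}$ and $\wt U_{23}$ are independent, whence
$$
\P\big(g_n(\x_1,\x_2)>x,\ g_n(\x_2,\x_3)>x\big)=\P\big(g_n(\x_1,\x_2)>x\big)^2 .
$$
From the estimate in (A1), $\P(g_n(\x_1,\x_2)>x)=\tilde p^{-1}\e^{-x}(1+o(1))=O(p^{-2})$, so the square is $O(p^{-4})=o(p^{-3})$; this is (A2). Theorem~\ref{thm:maxg} now delivers part~(a). Part~(b) is the same argument word for word with $\wt V_{ij}$ in place of $\wt U_{ij}$: (A1) uses the second conclusion of Lemma~\ref{lem:tailUV}, which is available precisely because the score function and the regression constants are assumed to satisfy \eqref{eq:condcnt}, and (A2) again uses the pairwise independence of $(V_{ij})$ from Lemma~\ref{lem:UVind}.

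The only genuinely delicate step is matching the range of validity of the large-deviation expansion in Lemma~\ref{lem:tailUV}---arguments of order $o(n^{1/6})$---to the size of the threshold $d_p+x/d_p\sim\sqrt{2\log\tilde p}$ that appears after rescaling: the hypothesis $p=\exp(o(n^{1/3}))$ is exactly what keeps this threshold admissible, so it is essential rather than cosmetic. Everything else is routine bookkeeping with Gaussian tails and the explicit form of $d_p$; in particular, because $m=2$, no higher-order overlap needs to be controlled---(A2) involves only two index pairs sharing a single coordinate, which is precisely the situation that pairwise independence settles.
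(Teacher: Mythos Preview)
Your proposal is correct and follows essentially the same route as the paper's proof: apply Theorem~\ref{thm:maxg} with $m=2$ and $\mu(x)=\e^{-x}$, verify (A1) via Lemma~\ref{lem:tailUV} together with the Mill's-ratio/Gaussian-tail asymptotics for the threshold $d_p+x/d_p=o(n^{1/6})$, and verify (A2) via the pairwise independence from Lemma~\ref{lem:UVind}. The only cosmetic differences are that the paper treats part~(b) first and uses the overlapping pair $(1,2),(1,3)$ rather than $(1,2),(2,3)$.
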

\begin{proof}
We start with the proof of \eqref{eq:NV} for which we will use Theorem \ref{thm:maxg}, as $\x_1,\ldots\x_p$ are iid and $g_{n,V}$ is a measurable function. Therefore, we only have to show that for $x\in \R$ it holds
\begin{enumerate}
\item[(1)] $\frac{p(p-1)}{2}\P(\widetilde{V}_{12}>x_p)\to \e^{-x}$ as $n\to\infty$,
\item[(2)] $p^3\P(\widetilde{V}_{12}>x_p, \widetilde{V}_{13}>x_p)\to 0$ as $n\to\infty$,
\end{enumerate}
where $x_p=x/d_p+d_p$. We will begin with the proof of (1). Since $x_p\sim d_p=o(n^{1/6})$ we get by Lemma \ref{lem:tailUV}  
\begin{align*}
\frac{p(p-1)}{2}\P(\widetilde{V}_{12}>x_p)=\frac{p(p-1)}{2}\bar{\Phi}(x_p)(1+o(1))
\end{align*}
and by Mill's ratio we have (writing $\tilde{p}=\tfrac{p(p-1)}{2}$)
\begin{align*}
\tilde{p}\,\bar{\Phi}(x_p)\sim \tilde{p}\frac{1}{\sqrt{2\pi}x_p}\e^{-x_p^2/2}\sim \tilde{p}\frac{1}{\sqrt{2\pi}\sqrt{2\log \tilde{p}}}\e^{-\log\tilde{p}+(\log\log\tilde{p})/2+(\log (4\pi))/2}\e^{-x}=\e^{-x}.
\end{align*}
Regarding (2), we note that, by Lemma~\ref{lem:UVind}, $\widetilde{V}_{12}$ and $\widetilde{V}_{13}$ are independent. Thus, we get 
\begin{align*}
p^3\P(\widetilde{V}_{12}>x_p, \widetilde{V}_{13}>x_p)=p^3\P(\widetilde{V}_{12}>x_p)^2=p^3(\bar{\Phi}(x_p)(1+o(1)))^2\to 0,\qquad n\to\infty,
\end{align*}
where we used Lemma \ref{lem:tailUV} and Mill's ratio in the last two steps. %By Mill's ratio we get
%\begin{align*}
%p^3(\bar{\Phi}(x_p)(1+o(1)))^2=p^3\frac{1}{2\pi x_p^2}e^{-x_p^2}\to 0,\qquad n\to\infty.
%\end{align*}
That completes the proof of \eqref{eq:NV}. The proof of \eqref{eq:NU} follows by analogous arguments.
\end{proof}
\begin{remark}\label{thm:hanmax}{\em
Theorem~\ref{thm:U} is a generalization of Theorems 1 and 2 in \cite{han:chen:liu:2017} who proved 
under the conditions of Theorem \ref{thm:U} and if $p = \exp(o(n^{1/3}))$ that
\begin{equation*}
\limn \P\Big( \max_{1\le i<j\le p} \wt V_{ij}^2 -4 \log p + \log \log p \le x\Big) = \exp\Big(-\tfrac{1}{\sqrt{8 \pi}} \e^{-x/2} \Big)\,, \quad x\in \R\,
\end{equation*}
and
\begin{equation*}
\limn \P\Big( \max_{1\le i<j\le p} \wt U_{ij}^2 -4 \log p + \log \log p \le x\Big) = \exp\Big(-\tfrac{1}{\sqrt{8 \pi}} \e^{-x/2} \Big)\,, \quad x\in \R\,.
\end{equation*}
}\end{remark}

As in Theorem \ref{thm:rectime}, we additionally conclude point process convergence for the record times of the maxima of $V_{ij}$ and $U_{ij}$. 
To this end, we investigate the sequence $(\max_{1\leq i<j\leq k}U_{ij})_{k\geq 1}$. This sequence jumps at time $k$ if one of the random variables $U_{1k},\ldots, U_{k-1,k}$ is larger than every $U_{ij}$ for $1\leq i<j\leq k-1$. Between these jump (or record) times the sequence is constant.

Let $L^U$ be this sequence of record times defined by
\begin{align*}
    L^U(1)&=1,\\
    L^U(k+1)&=\inf\{\ell>L^U(k):\max\limits_{1\le i< j\leq \ell} U_{ij}>\max\limits_{1\le i<j\le L^U(k)} U_{ij}\},\qquad k\in\N,
\end{align*}
and let $L^V$ be constructed analogously.
\begin{theorem}
Under the conditions of Theorem \ref{thm:U} it holds that
\begin{align*}
    \sum_{k=1}^p\varepsilon_{p^{-1}L^V(k)}\cid J\quad \text{and}\quad \sum_{k=1}^p\varepsilon_{p^{-1}L^U(k)}\cid J,
\end{align*}
in $\mathcal{M}(0,1]$, the space of point measures on $(0,1]$, where $J$ is a Poisson random measure with mean measure $\nu(a,b)=\log(b/a)$ for $0<a<b\leq 1$.
\end{theorem}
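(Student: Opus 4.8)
The plan is to obtain this as a direct specialization of Theorem~\ref{thm:rectime} to the case $m=2$. The first step is a harmless reduction: for each fixed $n$ (hence fixed $p=p_n$) the map $x\mapsto d_p\big((x-\mu_V)/\sigma_V-d_p\big)$ is strictly increasing, so applying it simultaneously to all the $V_{ij}$ leaves every running maximum $\max_{1\le i<j\le \ell}V_{ij}$ and therefore its record epochs unchanged. Consequently, writing $g_n(\x_i,\x_j):=d_p(\widetilde V_{ij}-d_p)$, which is a measurable symmetric function $\R^{2n}\to\R$, the record-time sequence $L^V$ is exactly the sequence $L$ from the paragraph preceding Theorem~\ref{thm:rectime} built from this $g_n$ with $m=2$; note that the index set $\{1\le i<j\le \ell\}$ in the definition of $L^V$ is literally the $m=2$ instance of the ordered-tuple index set $\{1\le i_1<i_2\le \ell\}$ used there.

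The second step is to verify that Theorem~\ref{thm:rectime} applies to this $g_n$. Its hypotheses are the conditions (A1) and (A2) of Theorem~\ref{thm:maxg} together with the requirement that $H(\cdot)=\exp(-\mu(\cdot))$ be an extreme value distribution. Here $\mu(x)=\e^{-x}$, so $H=\Lambda$ is the Gumbel distribution, which is an extreme value distribution; and (A1), (A2) with $\binom{p}{2}=p(p-1)/2$ and with the bound $o(p^{-3})$ are precisely the statements (1) and (2) established in the proof of Theorem~\ref{thm:U}(b). Hence Theorem~\ref{thm:rectime} yields
\[
\sum_{k=1}^p\varepsilon_{p^{-1}L^V(k)} \cid \sum_{k=1}^\infty\varepsilon_{\tau_k},
\]
and by \cite[Theorem~5.4.7]{embrechts:kluppelberg:mikosch:1997} the limit is a PRM on $(0,1]$ with mean measure $\nu(a,b)=\log(b/a)$, $0<a<b\le 1$, i.e.\ the process $J$ in the statement.

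The claim for $L^U$ follows by the identical argument, now taking $g_n(\x_i,\x_j):=d_p(\widetilde U_{ij}-d_p)$ and invoking Theorem~\ref{thm:U}(a) to supply (A1) and (A2). Since everything is a straightforward specialization of results already proved, I do not expect a genuine obstacle; the only point needing a moment's care is the reduction in the first paragraph---checking that standardizing and normalizing the statistics does not move the record epochs and that the two index-set conventions coincide---both of which are immediate.
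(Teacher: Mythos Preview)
Your proposal is correct and matches the paper's intended approach: the paper does not give an explicit proof of this theorem, but the sentence preceding it (``As in Theorem~\ref{thm:rectime}, we additionally conclude point process convergence for the record times\ldots'') makes clear that it is meant as a direct specialization of Theorem~\ref{thm:rectime}, using the verification of (A1) and (A2) from the proof of Theorem~\ref{thm:U}. Your explicit check that the strictly increasing standardization $x\mapsto d_p((x-\mu_V)/\sigma_V-d_p)$ preserves record epochs is a detail the paper leaves implicit, so your write-up is in fact slightly more careful than the paper's own treatment.
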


As in Corollary \ref{rem:recdis}, we can draw conclusions on the index of the last and second last jump before or at $p$.  Let $\zeta^U(p)$ be the number of records among $\max_{1\leq i<j\leq 2}U_{ij},\ldots, \max_{1\leq i<j\leq p}U_{ij}$. Then, as $n\to \infty$, we have for $x,y \in (0,1]$
\begin{enumerate}
    \item[(1)] $\P(p^{-1}L^U(\zeta^U(p))\leq x)\to \P(J(x,1]=0)=x$,\\
    
    \item[(2)] $\P(p^{-1}L^U(\zeta^U(p))\leq x, p^{-1}L^U(\zeta^U(p)-1)\leq y)\to y+y\log(x/y)$ for $x>y$,\\
    
    \item[(3)] $\P(p^{-1}(L^U(\zeta^U(p))-L^U(\zeta^U(p)-1))\leq x)\to x(1-\log(x))$,
\end{enumerate}
where (3) gives information about how much time elapses between the second last and the last jump of $(\max_{1\leq i<j\leq k}U_{ij})_{k\geq 1}$ before or at $p$.

\subsubsection{Kendall's tau}\label{sec:kendall}
Kendall's tau is an example of a rank-type U-statistic with bounded kernel. For $i\neq j$ Kendall's tau $\tau_{ij}$ measures the ordinal association between the two sequences $(X_{i1},\ldots,X_{in})$ and $(X_{j1},\ldots,X_{jn})$. It is defined by
\begin{align*}
\tau_{ij}&=\frac{2}{n(n-1)}\sum_{1\leq t_1< t_2\leq n}\operatorname{sign}(X_{it_1}-X_{it_2})\operatorname{sign}(X_{jt_1}-X_{jt_2})\\
&=\frac{2}{n(n-1)}\sum_{1\leq t_1< t_2\leq n}\operatorname{sign}(R_{ij}^{(t_2)}-R_{ij}^{(t_1)}),
\end{align*}
where the function $\operatorname{sign}:\R\to\{1,0,-1\} $ is given by $\operatorname{sign}(x)=x/|x|$ for $x\neq 0$ and $\operatorname{sign}(0)=0$. 
An interesting property of Kendall's tau is that there exists a representation as a sum of independent random variables. We could not find this representation in the literature. Therefore, we state it here. The proof can be found in Section~\ref{proof}.

\begin{proposition}\label{prop:tau}
We have
\begin{equation*}
\tau_{12} \eid \frac{4}{n(n-1)} \sum_{i=1}^{n-1} D_i\,,
\end{equation*}
where $(D_i)_{i\ge 1}$ are independent random variables with $D_i$ being uniformly distributed on the numbers $-i/2, -i/2+1 , \ldots, i/2$.
\end{proposition}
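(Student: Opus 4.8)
The plan is to reduce $\tau_{12}$ to a functional of a single uniformly random permutation and then to exploit the classical factorial decomposition of the inversion count. Since $\x_1$ and $\x_2$ are independent and (as already noted in this section) the joint law of the relative ranks does not depend on their common continuous marginal, the vector $(R_{12}^{(1)},\dots,R_{12}^{(n)})$ is a uniformly distributed random permutation $\sigma$ of $\{1,\dots,n\}$; this is the same observation that underlies Lemma~\ref{lem:UVind}. Let $I=I(\sigma)=\#\{(s,t):1\le s<t\le n,\ \sigma(s)>\sigma(t)\}$ denote the number of inversions. From the second representation of $\tau_{12}$ in Section~\ref{sec:kendall}, a pair contributes $+1$ if it is non-inverted and $-1$ if it is inverted, so
\[
\sum_{1\le s<t\le n}\operatorname{sign}\big(\sigma(t)-\sigma(s)\big)=\binom n2-2I ,
\]
and hence $\tau_{12}\eid 1-\tfrac{4}{n(n-1)}I$.

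Next I would invoke the well-known identity that, for a uniform random permutation of $\{1,\dots,n\}$,
\[
I \eid \sum_{i=1}^{n-1} U_i ,
\]
where $U_1,\dots,U_{n-1}$ are independent and $U_i$ is uniform on $\{0,1,\dots,i\}$. I would prove this via the insertion (Lehmer-code) construction of $\sigma$: build $\sigma$ by inserting the symbols $1,2,\dots,n$ successively, each into a uniformly chosen one of the available gaps; inserting the $(i+1)$st symbol into a word of length $i$ adds a number of new inversions equal to the number of already-placed symbols lying to its right, which is uniform on $\{0,\dots,i\}$ and independent of the earlier insertions, and $I$ is the sum of these independent increments. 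Alternatively one may simply cite this as a standard fact about the Mahonian distribution.

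Finally, set $D_i:=i/2-U_i$ for $i=1,\dots,n-1$. Then the $D_i$ are independent, $D_i$ is uniform on $\{-i/2,-i/2+1,\dots,i/2\}$, and, using $\sum_{i=1}^{n-1} i/2=\tfrac{n(n-1)}{4}$,
\[
\frac{4}{n(n-1)}\sum_{i=1}^{n-1}D_i \eid 1-\frac{4}{n(n-1)}\sum_{i=1}^{n-1}U_i \eid 1-\frac{4}{n(n-1)}I \eid \tau_{12},
\]
which is the assertion of the proposition.

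The bookkeeping in the last two displays is routine. The one step I would present with care is the independence assertion in $I\eid\sum_{i=1}^{n-1}U_i$, since this is precisely what forces the $D_i$ to be independent; it must be justified from the insertion construction (where the gap choices are made independently), not merely from the fact that each inversion increment is marginally uniform.
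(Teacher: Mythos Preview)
Your proof is correct and follows the same overall architecture as the paper: reduce $\tau_{12}$ to $1-\tfrac{4}{n(n-1)}I$ for the inversion count $I$ of a uniformly random permutation, decompose $I$ in law as a sum of independent uniforms $U_i$ on $\{0,\dots,i\}$, and then center to obtain the $D_i$. The only substantive difference is in how the decomposition $I\eid\sum_{i=1}^{n-1}U_i$ is justified. The paper reads it off from the factorization of the moment generating function $\E[\e^{tI}]=\prod_{j=1}^n\frac{1-\e^{jt}}{j(1-\e^t)}$ (citing Kendall--Stuart and Margolius), while you argue combinatorially via the insertion/Lehmer-code construction. Your route is more self-contained and makes the independence transparent without external references; the paper's MGF argument is shorter once the formula is granted. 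Either way the final centering step is the same, with the paper using the symmetry $U_i-i/2\eid i/2-U_i$ implicitly in its last display.
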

From Proposition~\ref{prop:tau} we deduce
$\E[\tau_{ij}]=0$ and $\Var(\tau_{ij})=\tfrac{2(2n+5)}{9n(n-1)}$.
The next result is a corollary of Theorem~\ref{thm:U}.
\begin{corollary}
Under the conditions of Theorem \ref{thm:U} we have
\begin{equation}\label{eq:Ntau}
N_n^{\tau} := \sum_{1\le i<j\le p} \vep_{ d_p( \tau_{ij}/\sqrt{\Var(\tau_{ij}})- d_p)} \cid N =\sum_{i=1}^{\infty} \vep_{-\log \Gamma_i}\,,\qquad \nto\,.
\end{equation}
\end{corollary}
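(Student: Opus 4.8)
The plan is to obtain \eqref{eq:Ntau} as an immediate application of Theorem~\ref{thm:U}(a). First I would note that, by the second representation of $\tau_{ij}$ in Section~\ref{sec:kendall}, Kendall's $\tau_{12}$ is a rank-type $U$-statistic of order $m=2$ in the sense of \eqref{eq:ustatistic}, with symmetric kernel
\[
h\big((x_1,y_1),(x_2,y_2)\big)=\operatorname{sign}(x_1-x_2)\,\operatorname{sign}(y_1-y_2)\,,
\]
since summing this kernel over ordered pairs $t_1\ne t_2$ is, up to the normalization, the same as summing over $t_1<t_2$, and the resulting statistic depends only on the relative ranks $(R_{12}^{(t)})_{t=1}^n$. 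As $|h|\le 1$, the kernel is bounded, so the only structural hypothesis of Theorem~\ref{thm:U}(a) that requires checking is non-degeneracy.

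For non-degeneracy I would use, as explained at the beginning of Section~\ref{sec2}, that the law of $\tau_{12}$ is unaffected by replacing the marginals of $\x_1$ and $\x_2$ with the standard uniform law; together with the independence of $\x_1$ and $\x_2$, this means the pair observations $(X_{1t},X_{2t})$, $t=1,\ldots,n$, are iid with independent $U(0,1)$ coordinates. Writing $(X,Y)$ for a generic such pair, the first-order (Hájek) projection of $h$ is
\[
h_1(x,y)=\E\big[\operatorname{sign}(x-X)\operatorname{sign}(y-Y)\big]=\E\big[\operatorname{sign}(x-X)\big]\,\E\big[\operatorname{sign}(y-Y)\big]=(2x-1)(2y-1)\,,
\]
so $\Var\big(h_1(X,Y)\big)=\E[(2X-1)^2]\,\E[(2Y-1)^2]=\tfrac19>0$, and $h$ is non-degenerate. (Equivalently, one may read this off from $\Var(\tau_{12})=\tfrac{2(2n+5)}{9n(n-1)}$, which has the exact order $n^{-1}$ characteristic of a non-degenerate kernel, rather than $n^{-2}$.)

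It then remains to match the standardization. By Proposition~\ref{prop:tau} (or directly, since swapping $t_1$ and $t_2$ in one coordinate flips the sign of a summand) we have $\mu_U=\E[\tau_{12}]=0$ and $\sigma_U^2=\Var(\tau_{12})$, hence the standardized statistic $\wt U_{ij}$ of \eqref{eq:UVtilde} is exactly $\tau_{ij}/\sqrt{\Var(\tau_{ij})}$. The conditions of Theorem~\ref{thm:U} are assumed, in particular $p=\exp(o(n^{1/3}))$, so Theorem~\ref{thm:U}(a) applies with $U_{ij}=\tau_{ij}$ and yields
\[
N_n^{\tau}=\sum_{1\le i<j\le p}\vep_{d_p(\wt U_{ij}-d_p)}\cid N=\sum_{i=1}^{\infty}\vep_{-\log\Gamma_i}\,,\qquad\nto\,,
\]
which is precisely \eqref{eq:Ntau}.

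I do not anticipate a genuine difficulty here: the whole argument is a verification that the hypotheses of Theorem~\ref{thm:U}(a) hold for $h$. The only two points needing a little care are confirming that the kernel of Kendall's $\tau$ is non-degenerate (so that Lemma~\ref{lem:tailUV}, and thereby Theorem~\ref{thm:U}(a), is applicable) and that $\mu_U=0$, which is what makes the centering and scaling in $N_n^{\tau}$ coincide with those in \eqref{eq:NU}.
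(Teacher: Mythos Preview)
Your proposal is correct and is precisely the intended argument: the paper states the result as an immediate corollary of Theorem~\ref{thm:U}(a), having already noted that Kendall's $\tau$ is a rank-type $U$-statistic with bounded kernel, and you have simply made the implicit verifications (non-degeneracy of $h$ and $\mu_U=0$) explicit.
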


\subsubsection{Spearman's rho}\label{sec:spearman}
An example of a simple linear rank statistic is Spearman's rho, which is a measure of rank correlation that assesses how well the relationship between two variables can be described using a monotonic function. Recall that $Q_{ik}$ and $Q_{jk}$ are the ranks of $X_{ik}$ and $X_{jk}$ among $\{X_{i1},\ldots , X_{in}\}$ and $\{X_{j1},\ldots, X_{jn}\}$, respectively, and write $q_n=(n+1)/2$ for the average rank. Then for $1\leq i\neq j\leq p$ Spearman's rho is defined by
\begin{align*}
\rho_{ij}&=\frac{\sum_{k=1}^n(Q_{ik}-q_n)(Q_{jk}-q_n)}{\big(\sum_{k=1}^n(Q_{ik}-q_n)^2\sum_{k=1}^n(Q_{jk}-q_n)^2\big)^{1/2}}\\
&=\frac{12}{n(n^2-1)}\sum_{k=1}^n\Big(k-\frac{n+1}{2}\Big)\Big(R_{ij}^{(k)}-\frac{n+1}{2}\Big).
\end{align*}
For mean and variance we get
\begin{align}\label{rhoEV}
\E[\rho_{ij}]=0 \quad\text{and}\quad\Var(\rho_{ij})=1/(n-1).
\end{align}
Therefore, we obtain the following corollary of Theorem~\ref{thm:U}.
\begin{corollary}\label{cor:spearman}
Under the conditions of Theorem \ref{thm:U} it holds that
\begin{align*}
N_n^\rho := \sum_{1\le i<j\le p} \vep_{ d_p(  \rho_{ij}/\sqrt{\Var(\rho_{ij})}- d_p)} \cid N\,.
\end{align*}
\end{corollary}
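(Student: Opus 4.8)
The plan is to exhibit Spearman's rho as a strictly positive affine image of one of the simple linear rank statistics already covered by Theorem~\ref{thm:U}(b), and then to read off the conclusion from \eqref{eq:NV}. The first step is to simplify $\rho_{ij}$: since $\sum_{k=1}^n(k-q_n)=0$, the defining double sum collapses to $\rho_{ij}=\frac{12}{n(n^2-1)}\sum_{k=1}^n(k-q_n)R_{ij}^{(k)}$. I would then take the score function $g(x)=x$ and regression constants $c_{nt}=n^{-1}f(t/(n+1))$ with $f(u)=u-\tfrac12$, so that $c_{nt}=(t-q_n)/(n(n+1))$ and $\sum_{t=1}^n c_{nt}^2>0$. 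With these choices the associated statistic $V_{ij}=\sum_{t=1}^n c_{nt}\,g(R_{ij}^{(t)}/(n+1))$ equals $\frac{1}{n(n+1)^2}\sum_{t=1}^n(t-q_n)R_{ij}^{(t)}$, and hence $\rho_{ij}=\frac{12(n+1)}{n-1}\,V_{ij}$, a strictly positive multiple of $V_{ij}$.

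The second step is to check that $g$ and $(c_{nt})$ satisfy the hypotheses of Theorem~\ref{thm:U}(b). The identity score $g$ is differentiable with Lipschitz constant $1$ and $f$ is Lipschitz, so it remains to verify \eqref{eq:condcnt}. By symmetry of $\{t-q_n:1\le t\le n\}$ about $0$ we have $\ov c_n=0$, so $c_{nt}-\ov c_n=c_{nt}$; using $\sum_{t=1}^n(t-q_n)^2=n(n^2-1)/12$ and $\max_{1\le t\le n}(t-q_n)^2=(n-1)^2/4$ one gets $\sum_t(c_{nt}-\ov c_n)^2=\frac{n-1}{12n(n+1)}$ and $\max_t(c_{nt}-\ov c_n)^2=\frac{(n-1)^2}{4n^2(n+1)^2}$, whose ratio is $\frac{3(n-1)}{n(n+1)}=o(n^{-2/3})$; this gives the first inequality in \eqref{eq:condcnt} for all large $n$. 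The second inequality is immediate because $\sum_{t=1}^n(t-q_n)^3=0$ (an odd power sum over a set symmetric about $0$), hence $\sum_t(c_{nt}-\ov c_n)^3=0$.

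The final step is purely algebraic. Since $\rho_{ij}$ is a positive multiple of $V_{ij}$ and $\E[\rho_{ij}]=\E[V_{ij}]=0$ (the value $\Var(\rho_{ij})=1/(n-1)$ from \eqref{rhoEV} confirms non-degeneracy), standardization is invariant under this affine map, so $\rho_{ij}/\sqrt{\Var(\rho_{ij})}=V_{ij}/\sqrt{\Var(V_{ij})}=\wt V_{ij}$. Therefore $N_n^\rho=\sum_{1\le i<j\le p}\vep_{d_p(\wt V_{ij}-d_p)}=N_n^V$, and \eqref{eq:NV} yields $N_n^\rho\cid N$. I expect the only genuine work to be the bookkeeping in matching the normalization of $\rho_{ij}$ to the simple-linear-rank form and in checking \eqref{eq:condcnt}; conceptually there is no obstacle, and once Spearman's rho is identified with $\wt V_{ij}$ the statement follows immediately from Theorem~\ref{thm:U}.
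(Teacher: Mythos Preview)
Your proposal is correct and follows precisely the approach the paper intends: the paper introduces Spearman's rho explicitly as ``an example of a simple linear rank statistic'' and then states the corollary as an immediate consequence of Theorem~\ref{thm:U}(b) without spelling out the verification. You have supplied exactly the missing bookkeeping---identifying the score $g(x)=x$ and the regression constants $c_{nt}=(t-q_n)/(n(n+1))$, checking \eqref{eq:condcnt} (both parts reduce to elementary symmetric-sum identities), and observing that the positive-scalar relation $\rho_{ij}=\tfrac{12(n+1)}{n-1}V_{ij}$ forces $\rho_{ij}/\sqrt{\Var(\rho_{ij})}=\wt V_{ij}$---so $N_n^\rho=N_n^V$ literally and \eqref{eq:NV} applies.
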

The next auxiliary result allows us to transfer the weak convergence of a sequence of point processes to a another sequence of point processes, provided that the maximum distance between their points tends to zero in probability.
\begin{proposition}\label{prop:ext}
For arrays $(X_{i,n})_{i,n\ge 1}$ and $(Y_{i,n})_{i,n\ge 1}$ of real-valued random variables, let $N^X_n=\sum_{i=1}^p \vep_{X_{i,n}}$ and assume that $N^X_n\cid N$. Consider a point process $N^Y_n=\sum_{i=1}^p \vep_{Y_{i,n}}$. If 
$$\max_{i=1,\ldots,p}|X_{i,n}-Y_{i,n}| \cip 0,$$
then $N^Y_n\cid N$.
\end{proposition}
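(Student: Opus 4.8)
The plan is to establish convergence of Laplace functionals. Recall (see \cite{resnick:1987}) that, in the space of point measures endowed with the vague topology, $N^Y_n\cid N$ holds if and only if $\E\bigl[\exp(-N^Y_n(f))\bigr]\to\E\bigl[\exp(-N(f))\bigr]$ for every continuous, nonnegative $f$ with compact support, where $m(f):=\int f\,dm$. Since $t\mapsto\e^{-t}$ is bounded and continuous on $[0,\infty)$, it is enough to prove $N^Y_n(f)\cid N(f)$ for each such $f$. The evaluation $m\mapsto m(f)$ is vaguely continuous, so the continuous mapping theorem together with the hypothesis $N^X_n\cid N$ gives $N^X_n(f)\cid N(f)$; by Slutsky's lemma it therefore suffices to show
\[
\Delta_n := N^Y_n(f)-N^X_n(f) = \sum_{i=1}^p\bigl(f(Y_{i,n})-f(X_{i,n})\bigr)\cip 0 .
\]

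Next I would bound $\Delta_n$ on a ``good'' event. Fix $\delta>0$; by uniform continuity of $f$ there is $\eta=\eta(\delta)>0$ with $|f(x)-f(y)|\le\delta$ whenever $|x-y|\le\eta$. Let $C$ be the closed $\eta$-neighbourhood of $\operatorname{supp} f$ (a compact set for $\eta$ small) and fix a continuous, compactly supported $h$ with $\mathbf 1_C\le h\le 1$. On the event $A_n:=\{\max_{1\le i\le p}|X_{i,n}-Y_{i,n}|\le\eta\}$ the $i$-th summand of $\Delta_n$ can be nonzero only if $X_{i,n}\in\operatorname{supp} f$ or $Y_{i,n}\in\operatorname{supp} f$, and in either case $X_{i,n}\in C$ (in the second case because $X_{i,n}$ then lies within $\eta$ of $\operatorname{supp} f$); moreover each such summand has modulus at most $\delta$. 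Hence, on $A_n$,
\[
|\Delta_n|\;\le\;\delta\,N^X_n(C)\;\le\;\delta\,N^X_n(h).
\]

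The main obstacle is that smallness of the individual increments $f(Y_{i,n})-f(X_{i,n})$ does not by itself control their sum; one must also cap the \emph{number} of non-negligible terms, which is the role of the dominating quantity $N^X_n(h)$. This is handled by tightness: $N^X_n(h)\cid N(h)$ by the continuous mapping theorem, and $N(h)<\infty$ almost surely because $N$ is a point measure and $h$ has compact support, so the real-valued sequence $(N^X_n(h))_n$ is tight. Now fix $\varepsilon,\varepsilon'>0$, choose $M$ with $\sup_n\P(N^X_n(h)>M)\le\varepsilon'$, put $\delta:=\varepsilon/M$ and take the associated $\eta$. On $A_n\cap\{N^X_n(h)\le M\}$ we have $|\Delta_n|\le\delta M=\varepsilon$, so $\{|\Delta_n|>\varepsilon\}\subseteq A_n^c\cup\{N^X_n(h)>M\}$ and
\[
\P(|\Delta_n|>\varepsilon)\le\P\Bigl(\max_{1\le i\le p}|X_{i,n}-Y_{i,n}|>\eta\Bigr)+\varepsilon'.
\]
By hypothesis the first term tends to $0$, so $\limsup_{n\to\infty}\P(|\Delta_n|>\varepsilon)\le\varepsilon'$; letting $\varepsilon'\downarrow 0$ yields $\Delta_n\cip 0$. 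Consequently $N^Y_n(f)\cid N(f)$ for every admissible $f$, and hence $N^Y_n\cid N$.
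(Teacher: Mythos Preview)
Your argument is sound and structurally close to the paper's: both reduce to $\Delta_n=\sum_i\bigl(f(Y_{i,n})-f(X_{i,n})\bigr)\cip 0$ for continuous compactly supported $f$, bound each summand on the event $\{\max_i|X_{i,n}-Y_{i,n}|\le\eta\}$ by a small constant times an indicator that $X_{i,n}$ lies in a fixed compact neighbourhood of $\operatorname{supp} f$, and then control the number of such indices. The genuine difference is in this last step. The paper applies Markov's inequality and uses $\E N_n^X((K,\infty])\to \E N((K,\infty])$, while you invoke tightness of $N_n^X(h)$, which follows directly from $N_n^X(h)\cid N(h)$ via the continuous mapping theorem. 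Your route is the more self-contained one: convergence of intensity measures does \emph{not} follow from $N_n^X\cid N$ alone (take $N_n=n\,\varepsilon_0$ with probability $1/n$ and the zero measure otherwise; then $N_n\cid 0$ but $\E N_n(\{0\})\equiv 1$), so the paper's step tacitly uses more than the stated hypothesis, whereas yours does not.

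One point of presentation to tidy up: as written, $h$ depends on $C$, hence on $\eta=\eta(\delta)$, but you later set $\delta=\varepsilon/M$ with $M$ chosen from the tightness of $N_n^X(h)$---a circularity. The fix is immediate: choose $h$ once and for all, dominating the indicator of (say) the closed $1$-neighbourhood of $\operatorname{supp} f$; then for every $\delta>0$ take $\eta(\delta)\le 1$, and the bound $|\Delta_n|\le\delta\,N_n^X(h)$ on $A_n$ holds with this fixed $h$. Now the chain $M=M(h)$, $\delta=\varepsilon/M$, $\eta=\eta(\delta)$ is non-circular and your conclusion goes through unchanged.
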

\begin{example}{\em 
It turns out that there is an interesting connection between Spearman's rho and Kendall's tau. By \cite[p.318]{hoeffding1948aclass} we can write Spearman's rho as
\begin{align}\label{reprho}
\rho_{ij}=\frac{n-2}{n+1}r_{ij}+\frac{3\tau_{ij}}{n+1},\quad\quad1\leq i\neq j\leq p,
\end{align}
where 
\begin{align*}
r_{ij}=\frac{3}{n(n-1)(n-2)}\sum_{1\leq t_1\neq t_2\neq t_3\leq n}\operatorname{sign}(X_{it_1}-X_{it_2})\operatorname{sign}(X_{jt_1}-X_{jt_3})
\end{align*}
is the major part of Spearman's rho.
Therefore, $r_{ij}$ is a U-statistic of degree three with an asymmetric bounded kernel and with
\begin{align}\label{rEV}
\E[r_{ij}]=0\quad\text{and}\quad\Var(r_{ij})=\frac{n^2-3}{n(n-1)(n-2)},\quad\quad 1\leq i\neq j \leq p.
\end{align}
We now use Proposition \ref{prop:ext} and Corollary~\ref{cor:spearman} to show that
\begin{equation}\label{eq:rhddrsdg}
N_n^{r} := \sum_{1\le i<j\le p} \vep_{ d_p(  r_{ij}/\sqrt{\Var(r_{ij})}- d_p)} \cid N:= \sum_{i=1}^{\infty} \vep_{-\log \Gamma_i}\,,\qquad \nto\,.
\end{equation}
For this purpose we consider the following difference\begin{align*}
     d_p\Big(  \frac{\rho_{ij}}{\sqrt{\Var(\rho_{ij})}}
     - d_p\Big)- d_p\Big(  \frac{r_{ij}}{\sqrt{\Var(r_{ij})}}- d_p\Big)
     = d_p\Big(\frac{\rho_{ij}}{\sqrt{\Var(\rho_{ij})}}-\frac{r_{ij}}{\sqrt{\Var(r_{ij})}}\Big).
\end{align*}
By \eqref{rhoEV}, \eqref{rEV} and \eqref{reprho} this expression is asymptotically equal to
\begin{align*}
    \frac{d_p}{\sqrt{n}}(\rho_{ij}-r_{ij})=\frac{3d_p}{\sqrt{n}(n+1)}(\tau_{ij}-r_{ij}).
\end{align*}
Since $|\tau_{ij}|$ and $|r_{ij}|$ are bounded above by constants, we deduce that
\begin{align*}
\max_{1\leq i<j\leq p}\Big|\frac{3d_p}{\sqrt{n}(n+1)}(\tau_{ij}-r_{ij})\Big|\cip 0 \,, \qquad \nto,
\end{align*}
which verifies the condition in Proposition~\ref{prop:ext}. Since $N_n^{\rho}\cid N$ by Corollary~\ref{cor:spearman}, we conclude the desired \eqref{eq:rhddrsdg}.
}\end{example}

\subsection{Sample covariances}
%As in the example of relative ranks there are more applications, in which it is possible to generalize a maximum convergence to a point process convergence in a similar way to Theorem \ref{thm:maxg}. 
An important field of current research is the estimation and testing of high-dimensional covariance structures. It finds application in genomics, social science and financial economics; see \cite{cai:2017} for a detailed review and more references. 
Under quite general assumptions Xiao et al.~\cite{xiao2013asymptotic} investigated the maximum off-diagonal entry of a high-dimensional sample covariance matrix. We impose the same model assumptions (compare \cite[p. 2901-2903]{xiao2013asymptotic}), but instead of the maximum we  study the point process of off-diagonal entries.

We start by describing the model and spelling out the required assumptions.
Let $\x_1,\ldots,\x_n$ be $p$-dimensional iid random vectors with $\x_i=(X_{1i},\ldots,X_{pi})$, where $\E[X_{ji}]=0$ for $1\leq j\leq p$ and $\bar{X}_j:=\frac{1}{n}\sum_{k=1}^n X_{jk}$. Denote $\Sigma=(\sigma_{i,j})_{1\leq i,j\leq p}$ as the covariance matrix  of the vector $\x_1$ and assume $\sigma_{i,i}=1$ for $1\leq i\leq p$. The empirical covariance matrix $(\hat{\sigma}_{i,j})_{1\leq i,j\leq p}$ is given by
\begin{align*}
\hat{\sigma}_{i,j}=\frac{1}{n}\sum_{k=1}^n (X_{ik}-\bar{X}_{i})(X_{jk}-\bar{X}_j),\quad\quad 1\leq i,j\leq p.
\end{align*}
A fundamental problem in high-dimensional inference is to derive the asymptotic distribution of $\max_{1\le i<j\le p} |\hat{\sigma}_{i,j}-\sigma_{i,j}|$.
Since the $\hat{\sigma}_{i,j}$'s might have different variances we need to standardize $\hat{\sigma}_{i,j}$ by
$\theta_{i,j}=\Var(X_{i1}X_{j1})$, which can be estimated by
\begin{align*}
\hat{\theta}_{i,j}=\frac{1}{n}\sum_{k=1}^n\Big[(X_{ik}-\bar{X}_{i})(X_{jk}-\bar{X}_j)-\hat{\sigma}_{i,j}\Big]^2.
\end{align*}
We are interested in the points
\begin{align*}
M_{i,j}:=\frac{|\hat{\sigma}_{i,j}-\sigma_{i,j}|}{\sqrt{\hat\theta_{i,j}}},\quad\quad 1\leq i<j\leq p.
\end{align*} 
Let $\mathcal{I}_n=\{(i,j): 1\leq i<j\leq p\}$ be an index set. We use the following notations to formulate the required conditions:
\begin{align*}
\mathcal{K}_n(t,r)&=\sup_{1\leq i\leq p}\E[\exp(t|X_{i1}|^r)],\\
\mathcal{M}_n(r)&=\sup_{1\leq i\leq p}\E[|X_{i1}|^r],\\
\theta_n&=\inf_{1\leq i<j\leq p}\theta_{i,j},\\
\gamma_n&=\sup_{\substack{\alpha,\beta\in \mathcal{I}_n\\ \alpha\neq \beta}} |\operatorname{Cor}(X_{i1}X_{j1},\,X_{k1}X_{l1})|,\quad \text{for}\, \alpha=(i,j),\,\beta=(k,l),\\
\gamma_n(b)&=\sup_{\alpha\in \mathcal{I}_n}\sup_{\substack{A\subset \mathcal{I}_n\\ |A|=b}}\inf_{\beta\in A}|\operatorname{Cor}(X_{i1}X_{j1},\,X_{k1}X_{l1})|\quad \text{for}\, \alpha=(i,j),\,\beta=(k,l).
\end{align*}
Now, we can draft the following conditions.
\begin{itemize}
\item[(B1) ] $\liminf\limits_{n \to \infty}\theta_n>0$. 
\item[(B2) ] $\limsup\limits_{n\to\infty}\gamma_n<1$.
\item[(B3) ] $\gamma_n(b_n)\log(b_n)=o(1)$ for any sequence $(b_n)$ such that $b_n\to\infty$.
\item[(B3')] $\gamma_n(b_n)=o(1)$ for any sequence $(b_n)$ such that $b_n\to\infty$ and for some $\varepsilon>0$
\begin{align*}
\sum_{\alpha,\beta\in \mathcal{I}_n} (\operatorname{Cov}(X_{i1}X_{j1},\,X_{k1}X_{l1}))^2=O(p^{4-\varepsilon})\quad \text{for}\, \alpha=(i,j),\,\beta=(k,l).
\end{align*}
\item[(B4) ] For some constants $t>0$ and $0<r\leq 2$, $\limsup\limits_{n\to\infty} \mathcal{K}_n(t,r)<\infty $, and
\begin{align*}
\log p=\begin{cases}o(n^{r/(4+r)}),\quad&\text{if}\,\,0<r<2, \\o(n^{1/3}(\log n)^{-2/3}),\quad&\text{if}\,\,r=2.\end{cases}
\end{align*}
\item[(B4') ] $\log p=o(n^{r/(4+3r)})$, $\limsup\limits_{n\to\infty} \mathcal{K}_n(t,r)<\infty $ for some constants $t>0$ and $r>0$.
\item[(B4'')] $p=O(n^q)$ and $\limsup\limits_{n\to\infty}\mathcal{M}_n(4q+4+\delta)<\infty$ for some constants $q>0$ and $\delta>0$.
\end{itemize}
 To be able to adopt parts of the proof of Theorem 2 in \cite{xiao2013asymptotic} we consider (instead of $(M_{i,j})$) the transformed points $(W_{i,j})$ given by
\begin{align*}
W_{i,j}:=\frac{1}{2}(n\,M_{i,j}^2-4\log p+\log\log p+\log 8\pi),\quad\quad 1\leq i<j\leq p,
\end{align*}
and we define the point processes
\begin{align*}
N_n^{(W)}:=\sum_{1\leq i<j\leq p} \vep_{W_{i,j}}\,.
\end{align*}
\begin{theorem}\label{thm:cov}
Let $\E[\x_1]=0$ and $\sigma_{i,i}=1$ for all $i$, and assume \textup{(B1)} and \textup{(B2)}. Then under any one of the following conditions:
\begin{itemize}
\item[(i)] \textup{(B3)} and \textup{(B4)},
\item[(ii)] \textup{(B3')} and \textup{(B4')},
\item[(iii)] \textup{(B3)} and \textup{(B4'')},
\item[(iv)] \textup{(B3')} and \textup{(B4'')},
\end{itemize}
it holds, that
\begin{align*}
N_n^{(W)}\cid N =\sum_{i=1}^{\infty} \vep_{-\log \Gamma_i}\,,\qquad \nto\,,
\end{align*}
where $\Gamma_i= E_1+\cdots+E_i$, $i\ge 1$, and $(E_i)$ are iid standard exponential, i.e., $N$ is a Poisson random measure with mean measure $\mu(x,\infty)=\e^{-x}$,
$x\in\R$.
\end{theorem}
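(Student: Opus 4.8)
The plan is to follow the blueprint of Theorem~\ref{thm:maxg}, with one essential caveat: Theorem~\ref{thm:maxg} cannot be invoked here, because the ``coordinate rows'' $(X_{i1},\dots,X_{in})$, $1\le i\le p$, are \emph{not} independent --- their mutual dependence is exactly what conditions (B1)--(B4'') control. So instead of quoting Theorem~\ref{thm:maxg} I would argue directly, adapting the Studentized moderate-deviation and weak-dependence estimates that already appear in the proof of Theorem~2 of \cite{xiao2013asymptotic}. By the standard criterion for convergence towards a Poisson random measure with diffuse, Radon mean measure (see, e.g., \cite{resnick:1987}), and since the limit $N$ is simple with $\mu((a,b])=\e^{-a}-\e^{-b}$, it suffices to check, for every finite union $B=\bigcup_{\ell=1}^{k}(a_\ell,b_\ell]$ of disjoint bounded intervals, two things: (a) the mean convergence $\E[N_n^{(W)}(B)]=\sum_{1\le i<j\le p}\P(W_{i,j}\in B)\to\mu(B)=\sum_{\ell=1}^{k}(\e^{-a_\ell}-\e^{-b_\ell})$; and (b) the void probability $\P(N_n^{(W)}(B)=0)\to\e^{-\mu(B)}$.

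Step (a) is the analogue of assumption (A1). It comes from the moderate-deviation analysis of the self-normalized statistic $M_{i,j}=|\hat\sigma_{i,j}-\sigma_{i,j}|/\sqrt{\hat\theta_{i,j}}$ in \cite{xiao2013asymptotic}, which is where the moment/sub-exponential conditions (B4), (B4'), (B4'') enter (through $\mathcal{K}_n(t,r)$ and $\mathcal{M}_n(r)$) and where (B1) keeps the normalization $\hat\theta_{i,j}$ bounded away from zero: one obtains $\sum_{1\le i<j\le p}\P(W_{i,j}>x)\to\e^{-x}$ for $x\in\R$, uniformly enough to pass to $\sum_{1\le i<j\le p}\P(W_{i,j}\in(a,b])\to\e^{-a}-\e^{-b}$, and then (a) follows by summing over the $k$ intervals.

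For step (b) I would prove the stronger statement that $\big(N_n^{(W)}(I_1),\dots,N_n^{(W)}(I_k)\big)$, $I_\ell=(a_\ell,b_\ell]$, converges to a vector of \emph{independent} Poisson variables with means $\mu(I_1),\dots,\mu(I_k)$, by the Chen--Stein method for Poisson approximation of point processes (equivalently, a factorial-moment / inclusion--exclusion argument in the spirit of \cite{schulte:thaele:2012,basrak:2021}). Indexing the summands by $\mathcal{I}=\{(i,j):1\le i<j\le p\}$ and declaring the dependency neighbourhood of $(i,j)$ to be the $O(p)$ pairs $(k,l)$ with $\{k,l\}\cap\{i,j\}\neq\emptyset$, the Chen--Stein error decomposes as $b_1+b_2+b_3$, where: $b_1=\sum_{(i,j)}\sum_{(k,l)\ \mathrm{nbr}}\P(W_{i,j}\in B)\P(W_{k,l}\in B)=O(p^{3}\cdot p^{-4})=O(p^{-1})$ using (a) and $\P(W_{i,j}\in B)=O(p^{-2})$; $b_2=\sum_{(i,j)}\sum_{(k,l)\ \mathrm{nbr},\,(k,l)\neq(i,j)}\P(W_{i,j}\in B,\,W_{k,l}\in B)$ is a sum over the $O(p^{3})$ configurations that share \emph{exactly one} index, which vanishes once one has the anti-clustering bound $\P(W_{i,j}\in B,\,W_{k,l}\in B)=o(p^{-3})$ --- the exact counterpart of (A2); and $b_3$ is a conditional term that tends to $0$ because, by (B2) and (B3)/(B3'), $W_{i,j}$ is asymptotically weakly dependent on the family $\{W_{k,l}:\{k,l\}\cap\{i,j\}=\emptyset\}$. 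Granting $b_1+b_2+b_3\to0$, we get $\P(N_n^{(W)}(B)=0)=\e^{-\E[N_n^{(W)}(B)]}+o(1)\to\e^{-\mu(B)}$ by (a), which is (b). Since each of the four hypothesis packages (i)--(iv) of the theorem is precisely one of the regimes in which \cite{xiao2013asymptotic} establishes the tail and correlation estimates just invoked, the conclusion follows in all four cases.

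The hard part is technical bookkeeping rather than a new idea. One must: (1) extract from \cite{xiao2013asymptotic} a moderate-deviation estimate that is uniform over the pairs $(i,j)$ and valid on full intervals, not only on half-lines, since $W_{i,j}$ need not be identically distributed when $\Sigma\neq I$; (2) verify the joint-tail bound $\P(W_{i,j}\in B,\,W_{k,l}\in B)=o(p^{-3})$ for index-overlapping pairs under each of (B3), (B3') and (B4)--(B4''), carrying the self-normalization by $\hat\theta_{i,j}$ and the sample centering by $\bar{X}_i$ through the estimates; and (3) control $b_3$ under the weak-dependence hypotheses (B2), (B3)/(B3'). All three ingredients appear, in one form or another, in the analysis of $\max_{1\le i<j\le p}|\hat\sigma_{i,j}-\sigma_{i,j}|$ in \cite{xiao2013asymptotic}, so upgrading that result to point-process convergence is essentially routine once they are reassembled.
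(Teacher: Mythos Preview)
Your overall strategy (Kallenberg's criterion, drawing on the tail and dependence estimates in \cite{xiao2013asymptotic}) is right, but the paper's execution differs from yours in two essential places, and one of your steps is a genuine difficulty rather than bookkeeping.

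First, the paper does \emph{not} work directly with the Studentized statistics $W_{i,j}$. It passes to a truncated, deterministically normalized proxy: with $\tilde X_{ik}$ a truncation of $X_{ik}$ (the truncation level depending on the regime (i)--(iv)), it sets $M_{1;i,j}=\tilde\theta_{i,j}^{-1/2}\big|n^{-1}\sum_k\tilde X_{ik}\tilde X_{jk}-\tilde\sigma_{i,j}\big|$ and $W_{1;i,j}=\tfrac12(nM_{1;i,j}^2-4\log p+\log\log p+\log 8\pi)$, proves $N_n^{(W_1)}\cid N$, and then transfers this to $N_n^{(W)}$ via Proposition~\ref{prop:ext} using $\max_{i<j}|W_{1;i,j}-W_{i,j}|\cip 0$, which is established inside the proof of Theorem~2 of \cite{xiao2013asymptotic}. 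This sidesteps exactly the issue you flag under your item~(2), namely ``carrying the self-normalization by $\hat\theta_{i,j}$ and the sample centering by $\bar X_i$ through the estimates''; after truncation those complications are gone and the relevant bounds from \cite{xiao2013asymptotic} apply verbatim.

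Second, and more importantly, for the void probability the paper does \emph{not} run Chen--Stein with a $b_1+b_2+b_3$ decomposition. Instead it uses Bonferroni inequalities together with the factorial-moment identity already proved in \cite{xiao2013asymptotic} (their Lemma~6): for every fixed $d\ge 1$,
\[
\sum_{\substack{A\subset\mathcal I_n\\|A|=d}}\P\big(W_{1;i_1,j_1}>z,\ldots,W_{1;i_d,j_d}>z\big)\;\longrightarrow\;\frac{\e^{-dz}}{d!}\,.
\]
Sandwiching $\P(N_n^{(W_1)}(B)=0)$ between the $2k$-th and $(2k-1)$-th Bonferroni partial sums and letting $n\to\infty$ then $k\to\infty$ gives $\e^{-\mu(B)}$ directly. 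This route only needs \emph{unconditional} joint tail probabilities of arbitrary finite tuples, which is precisely what \cite{xiao2013asymptotic} supplies. Your $b_3$ term, by contrast, requires controlling $\sum_{(i,j)}\E\big|\P(W_{i,j}\in B\mid \sigma(W_{k,l}:\{k,l\}\cap\{i,j\}=\emptyset))-\P(W_{i,j}\in B)\big|$, a sum of $O(p^2)$ conditional deviations that does not follow from (B2), (B3)/(B3') in any off-the-shelf way and is not provided in \cite{xiao2013asymptotic}. So your claim that $b_3\to 0$ is the real gap: it is not ``technical bookkeeping'' but a new estimate you would have to prove from scratch. The paper's Bonferroni/factorial-moment argument is the natural substitute and avoids the conditional step entirely.
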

\begin{proof}
Under condition (i) set $\mathcal{E}_n=n^{-(2-r)/(4(r+4))}$ if $0<r<2$, and $\mathcal{E}_n=n^{-1/6}(\log n)^{1/3}(\log p)^{1/2}$ if $r=2$. Under condition (ii) let $\mathcal{E}_n=(\log p)^{1/2}n^{-r/(6r+8)}$. Under (i) or (ii) we set
\begin{align*}
\tilde{X}_{ik}=X_{ik}\mathds{1}_{\{|X_{ik}|\leq T_n\}}-\E[X_{ik}\mathds{1}_{\{|X_{ik}|\leq T_n\}}],\quad\quad 1\leq i\leq p,\,\,\, 1\leq k\leq n,
\end{align*}
where $T_n=\mathcal{E}_n(n/(\log p)^3)^{1/4}$.
Under conditions (iii) and (iv) we set
\begin{align*}
\tilde{X}_{ik}=X_{ik}\mathds{1}_{\{|X_{ik}|\leq  n^{1/4}/\log n\}},\quad\quad 1\leq i\leq p,\,\,\, 1\leq k\leq n.
\end{align*}
Additionally, we define
$\tilde{\sigma}_{i,j}=\E[\tilde{X}_{i1}\tilde{X}_{j1}]$ and $\tilde{\theta}_{i,j}=\Var[\tilde{X}_{i1}\tilde{X}_{j1}]$.
We consider 
\begin{align*}
M_{1;i,j}=\frac{1}{\sqrt{\tilde{\theta}_{i,j}}}\Big|\frac{1}{n}\sum_{k=1}^n\tilde{X}_{ik}\tilde{X}_{jk}-\tilde{\sigma}_{ij}\Big|
\end{align*}
and the transformed points 
\begin{align*}
W_{1;i,j}=\frac{1}{2}(n\,M_{1;i,j}^2-4\log p+\log\log p+\log 8\pi).
\end{align*}
We will show that $N_n^{(W_1)}:=\sum_{1\leq i<j\leq p} \vep_{W_{1;i,j}}\cid N$ and thus by Proposition \ref{prop:ext} $N_n^{(W)}\cid N$. 

Therefore, we first apply Kallenberg's Theorem as in the proof of Theorem \ref{thm:maxg}. We set 
\begin{align*}
B=\bigcup_{k=1}^q B_k\subset \R
\end{align*}
with disjoint intervals $B_k=(r_k,s_k]$ and show
 \begin{enumerate}
 \item[(1)]
$\lim\limits_{n\to\infty} \mu_n^{(W_1)}(B)=\mu(B)$,
\item[(2)]
$\lim\limits_{n\to\infty} \P(N_n^{(W_1)}(B)=0)=\e^{-\mu(B)}$,
 \end{enumerate}
where $\mu_n^{(W_1)}(B)=\E[N_n^{(W_1)}(B)]$ and $\mu$ is defined by $\mu(B_k)=\e^{-r_k}-\e^{-s_k}$.
 
 From the proof of Theorem 2 of \cite[p. 2910, 2913-2914]{xiao2013asymptotic} we know that the conditions of \cite[Lemma 6]{xiao2013asymptotic} are satisfied. Furthermore, from the proof of Lemma 6 \cite[p. 2909-2910]{xiao2013asymptotic} we get that for $z\in \R$ and
\begin{align*}
z_n=(4\log p-\log \log p -\log 8\pi+2z)^{1/2}
\end{align*}
and $d\in \N$
\begin{align*}%\label{fin}
\lim_{n\to\infty}\sum_{\substack{A\subset \mathcal{I}_n\\|A|=d}}\P(\sqrt{n}M_{1;i_1,j_1}>z_n,\ldots,\sqrt{n}M_{1;i_d,j_d}>z_n)=\frac{\e^{-dz}}{d!},
\end{align*}
which is equivalent to 
\begin{align}\label{fin}
\lim_{n\to\infty}\sum_{\substack{A\subset \mathcal{I}_n\\|A|=d}}\P(W_{1;i_1,j_1}>z,\ldots,W_{1;i_d,j_d}>z)=\frac{\e^{-dz}}{d!},
\end{align}
where $A=\{(i_1,j_1),\ldots,(i_d,j_d)\}$. Therefore, we get for $d=1$ 
\begin{align*}
\lim_{n\to\infty}\mu_n^{(W_1)}(B)=\lim_{n\to\infty}\sum_{k=1}^q \sum_{(i,j)\in \mathcal{I}_n}\P(W_{1;i,j}\in B_k)=\sum_{k=1}^q (\e^{-r_k}-\e^{-s_k}) =\mu(B).
\end{align*}
which proves (1).
Regarding (2), we use that $1-\P(N_n^{(W_1)}(B)=0)=\P\Big(\bigcup_{1\leq i<j\leq p} A_{i,j}\Big)$, where $A_{i,j}=\{W_{1;i,j}\in B\}$. By Bonferroni's inequality we have for every $k\geq 1$,
\begin{align}\label{eq:drhhsds}
&\sum_{d=1}^{2k}(-1)^{d-1}\sum_{\substack{A\subset \mathcal{I}_n\\|A|=d}}P_{A,B}
\leq\P\Big(\bigcup_{1\leq i<j\leq p} A_{i,j}\Big)
\leq\sum_{d=1}^{2k-1}(-1)^{d-1}\sum_{\substack{A\subset \mathcal{I}_n\\|A|=d}}P_{A,B},
\end{align}
where $A=\{(i_1,j_1),\ldots,(i_d,j_d)\}$ and $P_{A,B}=\P(W_{1;i_1,j_1}\in B,\ldots,W_{1;i_d,j_d}\in B)$.
 First letting $\nto$ and then $k \to \infty$, we deduce from \eqref{fin} and \eqref{eq:drhhsds} that
\begin{align*}
\lim\limits_{n\to\infty} \P(N_n^{(W_1)}(B)=0)=1-\sum_{d=1}^\infty(-1)^{d-1}\frac{(\mu(B))^d}{d!}=\sum_{d=0}^\infty(-1)^{d}\frac{(\mu(B))^d}{d!}=\e^{-\mu(B)}.
\end{align*}
This proves (2)
% For $d=2$ we have
%\begin{align*}
%\lim_{n\to\infty}\sum_{\substack{A\subset I_p\\|A|=2}}\P(\sqrt{p}M_{1;i_1,j_1}>z_n,\,\sqrt{p}M_{1;i_2,j_2}>z_n)=\frac{e^{-2z}}{2}
%\end{align*}
%By a combinatorial argument the number of dependent couples $M_{1;i_1,j_1}$ and $M_{1;i_2,j_2}$ is asymptotically equal to $n^3/3$ and the number of independent couples is asymptotically equal to $n^4/8$. Therefore, it holds that
%\begin{align*}
%&\sum_{\substack{A\subset I_p\\|A|=2}}\P(\sqrt{p}M_{1;i_1,j_1}>z_n,\,\sqrt{p}M_{1;i_2,j_2}>z_n)\\
%&\sim \frac{n^4}{8}\P(W_{1;1,2}>z)^2+\frac{n^3}{3}\P(W_{1;1,2}>z,W_{1;1,3}>z),
%\end{align*} 
%where $\lim_{n\to\infty}\frac{n^4}{8}\P(W_{1;1,2}>z)^2=\lim_{n\to\infty}\frac{1}{2}\Big(\frac{n^2}{2}\P(W_{1;1,2}>z)\Big)^2=\frac{e^{-2z}}{2}$. It follows that
%\begin{align*}
%\P(W_{1;1,2}>z,W_{1;1,3}>z)=o(n^{-3}),
%\end{align*}
%and thus (A2). 
and we get $N_n^{(W_1)}\cid N$. By Proposition \ref{prop:ext} it remains to show
\begin{align*}
\max_{1\leq i<j\leq p}|W_{1;i,j}-W_{i,j}|=\frac{n}{2} \max_{1\leq i<j\leq p}|M^2_{1;i,j}-M^2_{i,j}|\cip 0.
\end{align*}
Fortunately, this is shown in the course of the proof of Theorem 2 of \cite[p. 2911-2916]{xiao2013asymptotic}.
\end{proof}

The following examples are motivated by \cite[p. 2903-2905]{xiao2013asymptotic}.
\begin{example}[Physical dependence]\em{
Assume that $\x_1=(X_{11},\ldots,X_{p1})$ is distributed as a stationary process of the following form. For a measurable function $g$ and a sequence of iid random variables $(\epsilon_i)_{i\in\Z}$  we set $\x_1=(X_{11},\ldots,X_{p1})$ with
\begin{align*}
X_{i1}= g(\epsilon_i,\epsilon_{i-1},\ldots),\quad\quad i\ge 1,
\end{align*}
and let $\x_k$, $2\leq k\leq n$, be iid copies of $\x_1$.
Moreover, for an iid copy $(\epsilon'_i)_{i\in\Z}$ of $(\epsilon_i)_{i\in\Z}$ and
\begin{align*}
X'_{i1}=g(\epsilon_i,\ldots,\epsilon_1,\epsilon'_0,\epsilon_{-1},\ldots)
\end{align*}
we define the {\it physical dependence measure} of order $q$ (see \cite{wu:2005})
\begin{align*}
\delta_q(i)=\E\big[ |X_{i1}-X'_{i1}|^q\big]^{1/q} \quad \text{ and } \quad 
\Psi_q(k)=\Big[\sum_{i=k}^\infty (\delta_q(i))^2\Big]^{1/2}.
\end{align*}}
Then, we conclude from Lemma 3 of \cite{xiao2013asymptotic} and Theorem \ref{thm:cov} the following statement.

Assume that $0<\Psi_4(0)<\infty$ and $\Var(X_{i1}X_{j1})>0$ for all $i,j\in\Z$ and \\$|\operatorname{Cor}(X_{i1}X_{j1},X_{k1}X_{l1})|<1$ for all $i,j,k,l$, such that they are not all the same. Then, if either one of the conditions
\begin{enumerate}
\item[(i)] $\Psi_q(k)=o(1/\log k)$ as $k\to\infty$ and one of the assumptions (B4) and (B4') or
\item[(ii)] $\sum_{j=0}^p(\Psi_4(j))^2=O(p^{1-\delta})$ for some $\delta>0$ and one of the assumptions (B4') or (B4'') 
\end{enumerate}
is satisfied, we have 
\begin{align*}
N_n^{(W)}\cid N,\qquad n\to\infty.
\end{align*}

As a special case we consider the linear process  $X_{i1}=\sum_{j=0}^\infty a_j\epsilon_{i-j}$, where the $\epsilon_j$ are iid with $\E[\epsilon_j]=0$ and $\E[|\epsilon_j|^q]<\infty$ with $q\ge 4$ and for $a_j\in\R$ it holds that $\sum_{j=0}^\infty a_j^2 \in(0,\infty)$. Then the physical dependence measure is given by $\delta_q(j)=|a_j|\,\E\big[ |\epsilon_0-\epsilon'_0|^q\big]^{1/q}$. Moreover, the conditions $0<\Psi_4(0)<\infty$ and $\Var(X_{i1}X_{j1})>0$ for all $i,j\in\Z$ and $|\operatorname{Cor}(X_{i1}X_{j1},X_{k1}X_{l1})|<1$ for all $i,j,k,l$, such that they are not all the same, are fulfilled. If $a_j=j^{-\beta}\ell(j)$, where $1/2<\beta<1$ and $\ell$ is a slowly varying function, then $(X_{i1})$ is a long memory process. The smaller the value of $\beta$, the stronger is the dependence between the $(X_{i1})$. If one of the assumptions (B4) or (B4') is satisfied, then condition (i) is fulfilled for every $\beta\in (1/2,1)$.

\end{example}

\begin{example}[Non-stationary linear processes]\em{
As in the previous example, $\x_1, \ldots, \x_n$ are iid random vectors. Now $\x_1=(X_{11},\ldots,X_{p1})$ is given by
\begin{align*}
X_{i1}=\sum_{t\in\Z}f_{i,t}\epsilon_{i-t}, \qquad i\ge 1,
\end{align*}
where $(\epsilon_i)_{i\in\Z}$ is a sequence of iid random variables with mean zero, variance one and finite fourth moment and the sequences $(f_{i,t})_{t\in\Z}$ satisfy $\sum_{t\in \Z}f_{i,t}^2=1$. Let $\kappa_4$ be the fourth cumulant of $\epsilon_0$ and 
\begin{align*}
h_n(k)=\sup_{1\leq i\leq p}\Big(\sum_{|t|=\lfloor k/2\rfloor}^\infty f_{i,t}^2\Big)^{1/2}.
\end{align*}
%where $\lfloor x\rfloor=\max\{y\in\Z:y\leq x\}$ for any $x\in\R$.\\
Assume that $\kappa_4>-2$ and 
\begin{align}\label{assump}
\limsup\limits_{n\to\infty}\sup_{1\leq i<j\leq p}\Big|\sum_{t\in\Z}f_{i,i-t}f_{j,j-t}\Big|<1.
\end{align}
By Section 3.2 of \cite[p. 2904-2905]{xiao2013asymptotic} and Theorem \ref{thm:cov} we get the following result. If either
\begin{enumerate}
\item[(i)] $h_n(k_n)\log k_n=o(1)$ for any positive sequence $k_n$ such that $k_n\to\infty$ as $n\to\infty$ and one of the assumptions (B4) and (B4') or
\item[(ii)] $\sum_{k=1}^p(h_n(k))^2=O(p^{1-\delta})$ for some $\delta>0$ and one of the assumptions (B4') or (B4'') 
\end{enumerate}
holds, then we have $N_n^{(W)}\cid N$ as $ n\to\infty$.

To illustrate these assumptions we consider the special case $\x_1:=(\epsilon_1,\ldots,\epsilon_p) A_n$, where  $A_n\in \R^{p\times p}$ is a deterministic, symmetric matrix with $(A_n)_{i,j}= a_{ij}$ for $1\leq i,j\leq p$. We assume that $\sum_{t=1}^p a_{it}^2=1$ for every $1\leq i\leq p$.\\
The covariance matrix of $\x_1$ is given by $\operatorname{Cov}(\x_1)=A_nA_n^T$ with $(A_nA_n^T)_{ij}=\sum_{t=1}^p a_{it}a_{jt}$. Observe that the diagonal entries are equal to $1$. To satisfy assumption \eqref{assump} we have to assume that the entries except for the diagonal are asymptotically smaller than $1$, i.e.
\begin{align*}
\limsup\limits_{n\to\infty}\sup_{1\leq i<j\leq p}\Big|\sum_{t=1}^p a_{it}a_{jt}\Big|<1.
\end{align*}
We set 
%\begin{align*}
%h_{A_n}(k)=\sup_{1\leq i\leq p}\Big(1-\sum_{t=1}^{\lfloor k/2 \rfloor \wedge i-1} a_{ii-t}^2+a_{ii+t}^2\Big)^{1/2}
%\end{align*}
\begin{align*}
h_n(k)=\sup_{1\leq i\leq p}\Big(\sum_{t=1}^{i-\lfloor k/2 \rfloor} a_{it}^2+\sum_{t=\lfloor k/2 \rfloor+i}^{p}a_{it}^2\Big)^{1/2}
\end{align*}
as a measure of how close the matrices $A_n$ are to diagonal matrices. 
For the point process convergence either (i) or (ii) has to be satisfied for $h_n$.
}\end{example}

\section{Proofs of the Results}\label{proof}
\subsection{Proofs of the results in Section \ref{sec1}}

\begin{proof}[Proof of Theorem \ref{thm:maxg}]
We will follow the lines of the proof of Theorem~2.1 in \cite{dabrowski2002poisson}.
Since the mean measure $\eta$ has a density, the limit process $M$ is simple and we can apply Kallenberg's Theorem (see for instance \cite[p.233, Theorem 5.2.2]{embrechts:kluppelberg:mikosch:1997} or \cite[p.35, Theorem 4.7]{kallenberg:1983}). Therefore, it suffices to prove that for any finite union of bounded rectangles
\begin{align*}
R=\bigcup_{k=1}^q A_k\times B_k\subset S,\qquad\text{with}\qquad A_k=\otime{l=1}{m} (r_k^{(l)},s_k^{(l)}],\quad B_k=(r_k^{(m+1)},s_k^{(m+1)}],
\end{align*}
 it holds that 
 \begin{enumerate}
 \item
$\lim\limits_{n\to\infty} \eta_n(R)=\eta(R)$,
\item
$\lim\limits_{n\to\infty} \P(M_n(R)=0)=\e^{-\eta(R)}$.
 \end{enumerate}
Without loss of generality we can assume that the $A_k$'s are chosen to be disjoint.  First we will show (1). Set $T:=T_{(1,2,\ldots, m)}=g_n(\x_1,\x_2,\ldots, \x_m)$. If $q=1$ we get
\begin{align*}
\eta_n(R)=\E[M_n(A_1\times B_1)]&=\sum_{\iv:\iv/p \in A_1}\P(T_{\iv} \in B_1)\\
&\sim p^{m}\, \prod_{l=1}^m (s_1^{(l)}-r_1^{(l)})\P(T\in B_1)\,.
\end{align*}
Since assumption (A1) implies $p^m /(m!)\,\P(T\in B_1)\to \mu(B_1)$,  we obtain the convergence $\eta_n(R)\to \eta(R)$ as $n\to \infty$. The case $q\geq 1$ follows by
\begin{align*}
\eta_n(R)=\sum_{k=1}^q\eta_n(A_k\times B_k)\to\sum_{k=1}^q\eta(A_k\times B_k)=\eta(R),\qquad n\to\infty.
\end{align*}

To show (2), %Let $R$ be an arbitrary finite union \jh{mit den Konventionen von oben oder wirklich arbitrary?} of bounded rectangles and 
we let $P_n$ be the probability mass function of the Poisson distribution with mean $\eta_n(R)$. Then we have
\begin{align*}
|\P(M_n(R)=0)-\P(M(R)=0)|&\leq |\P(M_n(R)=0)-P_n(0)|+|P_n(0)-\P(M(R)=0)|\\
&=|\P(M_n(R)=0)-P_n(0)|+o(1),
\end{align*}
where the last equality holds by (1). Therefore, we only have to estimate $|\P(M_n(R)=0)-P_n(0)|$. For this we employ the Stein-Chen method (see \cite{barbour:1992} for a discussion). The Stein equation for the Poisson distribution $P_n$ with mean $\eta_n(R)$ is given by
\begin{align}\label{steq}
\eta_n(R) x(j+1)-jx(j)=\1_{\{j=0\}}-P_n(0),\qquad j\geq 0. 
\end{align}
This equation is solved by the function
\begin{align*}
x(0)&=0\\
x(j+1)&=\frac{j!}{\eta_n(R)^{j+1}}\e^{\eta_n(R)}(P_n(\{0\})-P_n(\{0\})P_n(\{0,\ldots,j\})\,,\quad j=0,1,\ldots 
\end{align*}
By \eqref{steq} we see that
\begin{align}\label{stein}
|\P(M_n(R)=0)-P_n(0)|=|\E[\eta_n(R) x(M_n(R)+1)-M_n(R)x(M_n(R))]|.
\end{align}
Therefore, we only have to estimate the right hand side of \eqref{stein} and to this end we set
\begin{align*}
D&:=\{\kv :\,\kv=(k_1, k_2,\ldots,k_m), 1\leq k_1<k_2<\ldots<k_m\leq p\},\\
I_\kv&:=\sum_{i=1}^q\1_{A_i}(\kv/p)\1_{B_i}(T_\kv),\\
\eta_\kv&:=\E[I_\kv].
\end{align*}
For $\kv\in D$ let
\begin{align*}
D_{1\kv}&:=\{\lv\in D:\, \lv_i\neq k_j,\, i,j=1,2,\ldots,m\}\quad\text{and}\\
D_{2\kv}&:=\{\lv\in D:\, \lv\neq \kv,\, \lv_i=k_j\,\,\text{for some}\,\, i,j=1,2,\ldots,m\}.
\end{align*}
Then we have the disjoint union $D=D_{1\kv}\overset{.}{\cup} D_{2\kv}\overset{.}{\cup} \{\kv\}$, and therefore,
\begin{align*}
M_n(R)=\sum_{\lv\in D}I_\lv=\sum_{\lv\in D_{1\kv}}I_\lv+\Big(I_\kv+\sum_{\lv\in D_{2\kv}}I_\lv\Big)=:M^{(1)}_n(\kv)+M^{(2)}_n(\kv).
\end{align*}
Now, we bound \eqref{stein} by
\begin{align}
&\Big|\sum_{\kv\in D}\E[\eta_\kv x(M_n(R)+1)-I_\kv x(M_n(R))]\Big|\nonumber\\
&\leq \Big|\sum_{\kv\in D}\eta_\kv\E[x(M_n(R)+1)-x(M_n^{(1)}(\kv)+1)]\Big|+\Big|\sum_{\kv\in D}[\E[I_\kv x(M_n(R))]-\eta_\kv\E[x(M_n^{(1)}(\kv)+1)]]\Big|. \label{sum}
\end{align}
It suffices to show that both terms in \eqref{sum} tend to zero as $n\to\infty$. 
%We will start with the first term. 
%By the law of total expectation the first term is smaller than or equal to
%\begin{align}
%&\sum_{l\in D}\eta_l\sum_{k=0}^\infty|\E[x(M_n(R)+1))-x(M_n^{(1)}(l)+1)|M_n^{(2)}(l)=k]|\P(M_n^{(2)}(l)=k)\nonumber\\
%&=\sum_{l\in D}\eta_l\sum_{k=0}^\infty|\E[x(N^{(1)}_p(l)+M_n^{(2)}(l)+1))-x(M_n^{(1)}(l)+1)|M_n^{(2)}(l)=k]|\P(M_n^{(2)}(l)=k)\nonumber\\
%&=\sum_{l\in D}\eta_l\sum_{k=0}^\infty|\E[x(N^{(1)}_p(l)+k+1))-x(M_n^{(1)}(l)+1)|M_n^{(2)}(l)=k]|\P(M_n^{(2)}(l)=k)\label{sum1}
%\end{align}
From \cite[p.400]{barbour1984poisson} we have the following bound for the increments of the solution of Stein's equation
\begin{align}
\Delta x:=\sup_{j\in\N_0}|x(j+1)-x(j)|\leq\min(1,1/\eta_n(R)).\label{zuwachs}
\end{align}

Using \eqref{zuwachs} the first term of \eqref{sum} is bounded above by 
\begin{align*}
 \sum_{\kv\in D}\eta_\kv|\E[x(M_n^{(1)}(\kv)+M_n^{(2)}(\kv)+1)-x(M_n^{(1)}(\kv)+1)]|\leq \sum_{\kv\in D}\eta_\kv\E[M_n^{(2)}(\kv)].
\end{align*}
Using the definitions of $\eta_\kv$ and $M_n^{(2)}(\kv)$, we get
%\begin{align*}
%&\sum_{k\in D}\eta_l\E[M_n^{(2)}(k)]\\
%&=\sum_{k\in D}\Big(\sum_{i=1}^q\1_{A_i}(k/p)\P(c_n(g(\x_{k_1}, \x_{k_2},\ldots, \x_{k_m})-b_n)\in B_i)\Big)\\
%&\,\,\,\times \Big(\sum_{l\in D}\sum_{j=1}^q\1_{A_j}(l/p)\P(c_n(g(\x_{l_1}, \x_{l_2},\ldots, \x_{l_m})-b_n)\in B_j)-\sum_{l\in D_{1k}}\sum_{j=1}^q\1_{A_j}(l/p)\P(c_n(g(\x_{l_1}, \x_{l_2},\ldots, \x_{l_m})-b_n)\in B_j)\Big)\\
%&=\sum_{i=1}^q\sum_{j=1}^q\P(T\in B_i)\P(T\in B_j)\sum_{k\in D}\1_{A_i}(k/p)\sum_{l\in D}\1_{A_j}(l/p)\\
%&-\sum_{i=1}^q\sum_{j=1}^q\P(T\in B_i)\P(T\in B_j)\sum_{k\in D}\1_{A_i}(k/p)\sum_{l\in D_{1k}}\1_{A_j}(l/p)
%\end{align*}

\begin{align}
&\sum_{\kv\in D}\eta_\kv\E[M_n^{(2)}(\kv)]\nonumber\\
&=\sum_{\kv\in D}\Big(\sum_{i=1}^q\1_{A_i}(\tfrac{\kv}{p})\P(T\in B_i)\Big)\Big(\sum_{\lv\in D}\sum_{j=1}^q\1_{A_j}\big(\tfrac{\lv}{p}\big)\P(T\in B_j)-\sum_{\lv\in D_{1\kv}}\sum_{j=1}^q\1_{A_j}(\tfrac{\lv}{p})\P(T\in B_j)\Big)\nonumber\\
&=\sum_{i=1}^q\sum_{j=1}^q\P(T\in B_i)\P(T\in B_j)\sum_{\kv\in D}\1_{A_i}(\tfrac{\kv}{p})\sum_{\lv\in D}\1_{A_j}(\tfrac{\lv}{p})\nonumber\\
&-\sum_{i=1}^q\sum_{j=1}^q\P(T\in B_i)\P(T\in B_j)\sum_{\kv\in D}\1_{A_i}(\tfrac{\kv}{p})\sum_{\lv\in D_{1\kv}}\1_{A_j}(\tfrac{\lv}{p})\,. \label{lan}
\end{align}
Since by assumption (A1) it holds that $p^m\P(T\in B_i)\to m!(\mu(r_i^{(m+1)})-\mu(s_i^{(m+1)}))$ as $n\to\infty$, and
\begin{align*}
\frac{1}{p^m}\sum_{\kv\in D}\1_{A_i}(\tfrac{\kv}{p})&\to\lambda_m(A_i),\qquad n\to\infty\\
\frac{1}{p^{2m}}\sum_{\kv\in D}\sum_{\lv\in D_{1\kv}}\1_{A_i}(\tfrac{\kv}{p})\1_{A_j}(\tfrac{\lv}{p})&\to\lambda_m(A_i)\lambda_m(A_j)\qquad n\to\infty,
\end{align*}
\eqref{lan} and thus the first term of \eqref{sum} tend to zero as $n\to\infty$.  As every $I_\kv$ only depends on $T_\kv$ and because $D_{1\lv}$ only contains elements which have no component in common with $\lv$, $M_n^{(1)}(\lv)$ and $I_\lv$ are independent. Therefore, the second term of \eqref{sum} equals
\begin{align}
\Big|\sum_{\kv\in D}\E[I_\kv(x(M_n(R))-x(M_n^{(1)}(\kv)+1)]\Big|\leq \Delta x\sum_{\kv\in D}\E[I_\kv(M_n^{(2)}(\kv)-1)],\label{del}
\end{align}
where the last inequality follows from \eqref{zuwachs}. Since $I_\kv\leq 1$  because the $A_i$ are disjoint, the \rhs~in \eqref{del} is bounded above by
\begin{align}
&\Delta x\sum_{\kv\in D}\E[I_\kv\sum_{\lv\in D_{2\kv}}I_\lv]\nonumber\\
&=\Delta x \sum_{\kv\in D}\sum_{\lv\in D_{2\kv}}\E\Big[\Big(\sum_{i=1}^q\1_{A_i}(\tfrac{\kv}{p})\1_{B_i}(T_\kv) \Big)\Big(\sum_{j=1}^q\1_{A_j}(\tfrac{\lv}{p})\1_{B_j}(T_\lv)\Big)\Big]\nonumber\\
&=\Delta x \sum_{i=1}^q\sum_{j=1}^q\sum_{\kv\in D}\sum_{\lv\in D_{2\kv}}\1_{A_i}(\tfrac{\kv}{p})\1_{A_j}(\tfrac{\lv}{p})\P(T_\kv\in B_i,T_\lv\in B_j)\label{D2kr}
\end{align}
We set $D_{2\kv,r}:=\{\lv\in D:|\{\lv_1,\ldots, \lv_m,k_1,\ldots,k_m\}|=2m-r\}$.  Notice that $\dot{\bigcup}_{r=1}^{m-1} D_{2\kv,r}=D_{2\kv}$. Therefore, \eqref{D2kr} is equal to
\begin{align*}
\Delta x \sum_{i=1}^q\sum_{j=1}^q\sum_{r=1}^{m-1}\sum_{\kv\in D}\sum_{\lv\in D_{2\kv,r}}\1_{A_i}(\tfrac{\kv}{p})\1_{A_j}(\tfrac{\lv}{p})\P(T_\kv\in B_i,T_\lv\in B_j).
\end{align*}
By assumption (A2), we have $p^{2m-r}\P(T_\kv\in B_i,T_\lv\in B_j)\to 0$ for $r=1, \ldots m-1$ as $n\to\infty$. Additionally, it holds that
\begin{align*}
\frac{1}{p^{2m-r}}\sum_{\kv\in D}\sum_{\lv\in D_{2\kv,r}}\1_{A_i}(\tfrac{\kv}{p})\1_{A_j}(\tfrac{\lv}{p})=O(1),\qquad n\to\infty.
\end{align*}
Consequently the second term of \eqref{sum} tends to zero as $n\to\infty$.
This completes the proof.
\end{proof}

\begin{proof}[Proof of Proposition \ref{prop:extr}] 
We proceed similarly to the proof of Proposition 4.20 of \cite{resnick:1987}.
We want to show that $Y_n\cid Y$. Therefore, we define a map from the space of point measures $\mathcal{M}(S)$ to $D(0,1]$, the space of right continuous functions on $(0,1]$ with finite limits existing from the left,  and show that this map is continuous. Then, the Proposition follows by the continuous mapping theorem. 

To this end, for a point measure $\mathbf{m}=\sum_{k=1}^\infty\varepsilon_{(t_k,y_k)} \in \mathcal{M}(S)$ we define $V_1:\mathcal{M}(S)\to D(0,1]$ through
\begin{align*}
    V_1(\mathbf{m})=V_1\Big(\sum_{k=1}^\infty\varepsilon_{(t_k,y_k)}\Big)=\begin{cases} &v_\mathbf{m}:(0,1]\to (v,w)\\
 &v_\mathbf{m}(t)=\begin{cases} \bigwedge\limits_{k:t_k\leq t} y_k,\,\, \mathbf{m}(((0,1]^{m-1}\times (0,t]\times (v,w))\cap S)>0\\
 \bigwedge\limits_{k:t_k=t^*}y_k,\,\, \text{otherwise},
\end{cases}
\end{cases}
\end{align*}
where $t^*=\sup\{s>0:\mathbf{m}(((0,1]^{m-1}\times (0,s]\times (v,w))\cap S)=0\}$. $V_1$ is well-defined except at $\mathbf{m}\equiv 0$. Recalling the definition of $N_n$ in \eqref{eq:defNnsdfsddv}, we note that $V_1(N_n)(t)=Y_n(t)$ and $V_1(N)(t)=Y(t)$ for $0<t\leq 1$. \\

We will start by proving the continuity of $V_1$ in the case, where $\mu(x)=-\log(H(x))$ and $H$ is the Gumbel distribution. In this case, $N$ has a.s.~the following properties
\begin{align}
    N(((0,1]^{m-1}\times\{1\}\times (-\infty,\infty))\cap S)&=0,\\
    N(((0,1]^{m-1}\times (0,t] \times (x,\infty))\cap S)&<\infty,\label{prop2}\\
    N(((0,1]^{m-1}\times [s,t] \times (-\infty,x))\cap S)&=\infty,
\end{align}
 for any $0<s<t<1$ and $x\in\R$. Therefore, we only have to show continuity at $\mathbf{m}\in\mathcal{M}(S)$ with these properties. Let $(\mathbf{m}_n)_n$ be a sequence of point measures in $\mathcal{M}(S)$, which converges vaguely to $\mathbf{m}$ ($\mathbf{m}_n\stackrel{v}{\rightarrow}\mathbf{m}$) as $n\to\infty$ (see \cite[p. 140]{resnick:1987}). Since $V_1(\mathbf{m})$ is right continuous there exists a right continuous extension on $[0,1]$, which we denote with $\widetilde{V_1(\mathbf{m})}$. Now choose $\beta<\widetilde{V_1(\mathbf{m})}(0)$ such that $\mathbf{m}(S_1\times \{\beta\})=0$. As $\mathbf{m}_n\stackrel{v}{\rightarrow}\mathbf{m}$, we can conclude from \cite[Proposition 3.12]{resnick:1987} that there exists a $1\leq q<\infty$ such that for $n$ large enough 
\begin{align*}
    \mathbf{m}_n(S_1\times (\beta,\infty))=\mathbf{m}(S_1\times (\beta,\infty))=q.
\end{align*}
We enumerate and designate the $q$ points in the following way $((t_i^{(n)},j_i^{(n)}),\, 1\leq i\leq q)$ with $0<t_{1,m}^{(n)}<\ldots<t_{q,m}^{(n)}<1$, where $t_{i,m}^{(n)}$ is the $m$-th component of $t_i^{(n)}$, such that by \cite[Proposition 3.13]{resnick:1987}
\begin{align*}
    \lim\limits_{n\to\infty}((t_i^{(n)},j_i^{(n)}),\, 1\leq i\leq q)=((t_i,j_i),\, 1\leq i\leq q),
\end{align*}
where $((t_i,j_i),\, 1\leq i\leq q)$ is the analogous enumeration of points of $\mathbf{m}$ in $S_1\times (\beta,\infty)$. Now choose 
\begin{align*}
\delta< \frac{1}{2} \min\limits_{1\leq i,j\leq q}\Vert t_i-t_j\Vert_2
\end{align*}
small enough so that the $\delta$-spheres of the distinct points of the set $\{(t_i,j_i)\}$ are disjoint and in $S_1\times [\beta,\infty)$. %, where $\Vert \cdot \Vert$ is the Euclidean norm. 
Pick $n$ so large that every $\delta$-sphere contains only one point of $\mathbf{m}_n$. Then set $\lambda_n:[0,1]\to [0,1]$ with $\lambda_n(0)=0$, $\lambda_n(1)=1$, $\lambda_n(t_{i,m})=t_{i,m}^{(n)}$ and $\lambda_n$ is linearly interpolated elsewhere on $[0,1]$. For this $\lambda_n$ it holds that
\begin{align*}
    \sup\limits_{0\leq t\leq 1}|\widetilde{V_1(\mathbf{m}_n)}(t)-\widetilde{V_1(\mathbf{m})}(\lambda_n(t))|&<\delta\quad \text{and} \quad
    \sup\limits_{0\leq t\leq 1}|\lambda_n(t)-t|<\delta.
\end{align*}
Thereby, we get
\begin{align*}
    \tilde{\mathcal{D}}(\widetilde{V_1(\mathbf{m}_n)},\widetilde{V_1(\mathbf{m})})=\mathcal{D}(V_1(\mathbf{m}_n),V_1(\mathbf{m}))<\delta,
\end{align*}
which finishes the proof. The Fr\'{e}chet and the Weibull case follow by similar arguments.
\end{proof}

\begin{proof}[Proof of Theorem \ref{thm:rectime}]
We will proceed similarly as in \cite[p.~217-218]{resnick:1987} using the continuous mapping theorem again. Since $Y$ is the restriction to $(0,1]$ of an extremal process (see \cite[Section 4.3]{resnick:1987}), it is a nondecreasing function, which is constant between isolated jumps. Let $D^\uparrow(0,1]$ be the subset of $D(0,1]$ that contains all functions with this property. Set 
\begin{align*}
V_2:&D^\uparrow(0,1]\to \mathcal{M}(0,1]\\
&x\mapsto \sum_{i=1}^\infty \varepsilon_{t_i},
\end{align*}
where $\{t_i\}$ are the discontinuity points of $x$. Then $V_2(Y_n)=\sum_{k=1}^p\varepsilon_{p^{-1}L(k)}$ and $V_2(Y)=\sum_{k=1}^\infty\varepsilon_{\tau_k}$, where $(\tau_k)_k$ is the sequence of discontinuity points of the extremal process generated by the Gumbel distribution $H=\Lambda$, c.f. above Theorem~\ref{thm:rectime}. By \cite[Theorem 5.4.7]{embrechts:kluppelberg:mikosch:1997} the point process $\sum_{k=1}^\infty\varepsilon_{\tau_k}$ is a PRM with mean measure $\nu(a,b)=\log(b/a)$ for $0<a<b\leq 1$. According to Proposition \ref{prop:extr}, it suffices to show that $V_2$ is continuous. Let $(x_n)_n$ be a sequence of functions in $D^\uparrow (0,1]$ with $\mathcal{D}(x_n,x)\to 0$ as $n\to\infty$ for an $x\in D^\uparrow (0,1]$. Then there exist $\lambda_n\in\Lambda_{[0,1]}$ such that
\begin{align}
    \sup\limits_{0\leq t\leq 1}|\tilde{x}_n(\lambda_n(t))-\tilde{x}(t)|&\to 0\quad \text{and}\label{sko1}\\
    \sup\limits_{0\leq t\leq 1}|\lambda_n(t)-t|&\to 0,\qquad \nto \,,\label{sko2}
\end{align}
where $\tilde{x}_n$ and $\tilde{x}$ are the right continuous extensions of $x_n$ and $x$ on $[0,1]$. We want to prove the vague convergence
\begin{align*}
V_2(x_n)=\sum_{i=1}^\infty \varepsilon_{t_i^{(n)}}\stackrel{v}{\rightarrow} V_2(x)=\sum_{i=1}^\infty \varepsilon_{t_i},
\end{align*}
where $\{t_i^{(n)}\}$ and $\{t_i\}$ are the discontinuity points of $x_n$ and $x$, respectively. Consider an arbitrary continuous function $f$ on $(0,1]$ with compact support contained in an interval $[a,b]$ with $0<a<b\leq 1$, and $x$ is continuous at $a$ and $b$. %Since $V_2(x_n)=\sum_{i=1}^\infty \varepsilon_{t_i^{(n)}}$, where $t_i^{(n)}$ are the discontinuity points of $x_n$, 
It suffices to show that
\begin{align}\label{contin}
    \lim_{n\to\infty}\sum_{i=1}^\infty f(t_i^{(n)})\1_{[a,b]}(t_i^{(n)})=\sum_{i=1}^\infty f(t_i)\1_{[a,b]}(t_i).
\end{align}
The functions $x_n,x\in D^\uparrow(0,1]$ have only finitely many discontinuity points in $[a,b]$. Therefore, only a finite number of terms in the sums are not equal to zero. Because of \eqref{sko1} and \eqref{sko2} the jump times on $[a,b]$ of $x_n$ are close to those of $x$, which proves \eqref{contin}. Hence, $V_2$ is continuous, which finishes the proof.
\end{proof}

\subsection{Proof of Proposition~\ref{prop:tau}}

Let $\pi$ denote the permutation of $\{1,\ldots,n\}$ induced by the order statistics of $X_{2 1}, \ldots, X_{2 n}$, i.e.,
\begin{equation*}
X_{2 \pi(1)} > X_{2 \pi(2)} > \cdots > X_{2 \pi(n)} \quad \as\,,
\end{equation*}
where the continuity of the distribution of $X$ was used to avoid ties.
We can rewrite $\tau_{12}$ as 
\begin{align}
\tau_{12}&= \tfrac{2}{n(n-1)}  \sum_{1\le s<t\le n} \sign(X_{1s}-X_{1t}) \sign(X_{2s}-X_{2t})\nonumber\\
&= \tfrac{2}{n(n-1)}  \sum_{1\le s<t\le n} \sign(X_{1\pi(s)}-X_{1\pi(t)}) \underbrace{\sign(X_{2\pi(s)}-X_{2\pi(t)})}_{=1 \quad \as}\nonumber\\
&\eid \tfrac{2}{n(n-1)} \sum_{1\le s<t\le n} \sign(X_{1s}-X_{1t}).\label{eq:esfe}
\end{align}
Let $\mathbf{q}_n=(q_1,\ldots,q_n)$ be a permutation of the set $\{1, \ldots,n\}$. If $i<j$ and $q_i>q_j$, we call the pair $(q_i, q_j)$ an inversion of the permutation $\mathbf{q}_n$. %We write $I(\mathbf{q}_n)$ for the number of inversions of $\mathbf{q}_n$.   

 Since the $X_{11},\ldots, X_{1n}$ are iid, the permutation
\begin{equation*}
\mathbf{q}_n= (Q_{11}, Q_{12}, \ldots,Q_{1n})
\end{equation*}
consisting of the ranks is uniformly distributed on the set of the $n!$ permutations of $\{1,\ldots,n\}$. By $I_n$ we denote the number of inversions of $\mathbf{q}_n$. For $s<t$, we have
\begin{equation*}
\sign(X_{1s}-X_{1t}) = \left\{
\begin{array}{cl}
1, & \quad \text{ if } Q_{1s}<Q_{1t}\,, \\
-1, & \quad \text{ if } Q_{1s}>Q_{1t} \, \Leftrightarrow \text{ inversion at } (s,t)\,. 
\end{array}
\right. 
\end{equation*}
In view of \eqref{eq:esfe}, this implies 
\begin{equation*}
\begin{split}
\tau_{12}
\eid \binom{n}{2}^{-1} \sum_{1\le s<t\le n} \sign(X_{1s}-X_{1t})
= \binom{n}{2}^{-1} \Big[  \binom{n}{2} - 2 \, I_n \Big] = 1-\tfrac{4}{n(n-1)} I_n\,.
\end{split}
\end{equation*}
By \cite[p.~479]{kendall:stuart:1973} or \cite[p.~3]{margolius:2001} (see also \cite{sachkov:1997}) the moment generating function of $I_n$ is
\begin{equation}\label{eq:mgf}
\E\Big[ \e^{t I_n} \Big] = \prod_{j=1}^n \frac{1-\e^{jt}}{j(1-\e^t)}\,, \quad t\in \R\,.
\end{equation}
We recognize that $\frac{1-\e^{jt}}{j(1-\e^t)}$ is the moment generating function of a uniform distribution on the integers $0, 1, \ldots, j-1$. Let $(U_i)_{i\ge 1}$ be a sequence of independent random variables such that $U_i$ is uniformly distributed on the integers $0, 1, \ldots, i$. We get
\begin{equation*}
\begin{split}
1-\tfrac{4}{n(n-1)} I_n & \eid 1-\tfrac{4}{n(n-1)} \sum_{i=1}^{n-1} U_i \eid \tfrac{4}{n(n-1)} \sum_{i=1}^{n-1} (U_i-i/2)\,,
\end{split}
\end{equation*}
establishing the desired result.

\subsection{Proof of Proposition~\ref{prop:ext}}
Our idea is to transfer the convergence of $N_n^X$ onto $N_n^Y$. To this end, it suffices to show (see \cite[Theorem 4.2]{kallenberg:1983})
%that (see \cite[Theorem 4.2]{kallenberg:1983}) $N_n^Y-N_n^X \cip 0$ as $\nto$, or equivalently
that for any continuous function $f$ on $\R$ with compact support,
\begin{equation*}
\int f \dint N_n^Y - \int f \dint N_n^X \cip 0\,, \quad \nto\,.
\end{equation*}
Suppose the compact support of $f$ is contained in $[K+\gamma_0, \infty)$ for some $\gamma_0>0$ and $K\in \R$. Since $f$ is uniformly continuous, $\omega(\gamma):= \sup \{|f(x)-f(y)|: x,y \in \R, |x-y| \le \gamma\}$ tends to zero as $\gamma\to 0$. 
We have to show that for any $\vep >0$,
\begin{equation}\label{eq:dgse}
\lim_{\nto} \P \Big( \Big| \sum_{i=1}^p \Big( f(Y_{i,n})-f(X_{i,n}) \Big)\Big| >\vep \Big) =0\,.
\end{equation}
On the sets
\begin{equation}\label{eq:An}
A_{n,\gamma}= \Big\{ \max_{i=1,\ldots,p} \big| Y_{i,n}-X_{i,n} \big|  \le \gamma \Big\}\, ,\quad \gamma \in (0, \gamma_0) \,,
\end{equation}
we have 
\begin{equation*}
\big|f(Y_{i,n})-f(X_{i,n}) \big| \le \omega(\gamma) \,\1_{\{ X_{i,n} >K\}}\,.
\end{equation*}
Therefore, we see that, for $\gamma \in (0, \gamma_0)$,
\begin{equation}\label{eq:angamma}
\begin{split}
\P &\Big( \Big| \sum_{i=1}^p \Big( f(Y_{i,n})-f(X_{i,n}) \Big)\Big| >\vep, A_{n,\gamma} \Big)\\
&\le \P \Big( \omega(\gamma)\, \#\{1\le i \le p : X_{i,n} >K \} > \vep \Big)\\
&\le \frac{\omega(\gamma)}{\vep} \E\big[ \#\{1\le i \le p : X_{i,n} >K \}\big]\\
%&{\blue \le \frac{\omega(\gamma)}{\vep} \E\big[\sum_{1\le i<j \le p} \1( (S_{ij}/ \sqrt{n} -\wt d_p) \wt d_p >K )\big]} {\red not\, needed\, delete}\\
&= \frac{\omega(\gamma)}{\vep} \E N_n^X((K,\infty])\\
&\to \frac{\omega(\gamma)}{\vep} \E N((K,\infty])\,, \qquad \nto\,.
\end{split}
\end{equation}
By assumption, it holds $\lim_{\nto} \P(A_{n,\gamma}^c) =0$. Thus, letting $\gamma\to 0$ establishes \eqref{eq:dgse}.

\bibliography{libraryjohannes}
\end{document}